\newtheorem{theorem}{Theorem}
\newtheorem{lemma}{Lemma}
\newtheorem{assumption}{Assumption}
\newtheorem{remark}[theorem]{Remark}
\DeclareMathAlphabet{\mathpzc}{OT1}{pzc}{m}{it}
\newcommand{\dps}{\displaystyle } 
\newcommand{\rme}{\mathrm{e}}
\newcommand{\PP}{\mathbb{P}}
\newcommand{\RR}{\mathbb{R}}
\newcommand{\R}{\mathbb{R}}
\newcommand{\Id}{\mathrm{Id}}
\newcommand{\EE}{\mathbb{E}}
\newcommand{\cR}{\mathcal{R}}
\newcommand{\cM}{\mathcal{M}}
\newcommand{\cL}{\mathcal{L}}
\newcommand{\dt}{{\Delta t}}
\newcommand{\ind}{\mathbf{1}}
\renewcommand{\leq}{\leqslant}
\renewcommand{\geq}{\geqslant}
\newcommand{\Tr}{\operatorname{Tr}}
\renewcommand{\mod}{{\mathrm{mod}}}
\newcommand{\EM}{{\mathrm{EM}}}
\newcommand{\Barker}{{\mathrm{Barker}}}
\begin{document}

  
\title{Improving dynamical properties of metropolized discretizations of overdamped Langevin dynamics}
\author{M. Fathi$^{1}$ and G. Stoltz$^{2}$ \\
{\small $^{1}$ LPMA, 4 place Jussieu, 75005 Paris, France} \\
{\small $^{2}$ Universit\'e Paris-Est, CERMICS (ENPC), INRIA, F-77455 Marne-la-Vall\'ee, France} \\
}
 
\maketitle

\abstract{
The discretization of overdamped Langevin dynamics, through schemes such as the Euler-Maruyama method, can be corrected by some acceptance/rejection rule, based on a Metropolis-Hastings criterion for instance. In this case, the invariant measure sampled by the Markov chain is exactly the Boltzmann-Gibbs measure. However, rejections perturb the dynamical consistency of the resulting numerical method with the reference dynamics. We present in this work some modifications of the standard correction of discretizations of overdamped Langevin dynamics on compact spaces by a Metropolis-Hastings procedure, which allow us to either improve the strong order of the numerical method, or to decrease the bias in the estimation of transport coefficients characterizing the effective dynamical behavior of the dynamics. For the latter approach, we rely on modified numerical schemes together with a Barker rule for the acceptance/rejection criterion. 
}


\section{Introduction}

Molecular simulation is nowadays a very common tool to quantitatively predict macroscopic properties of matter starting from a microscopic description. These macroscopic properties can be either static properties (such as the average pressure or energy in a system at fixed temperature and density), or transport properties (such as thermal conductivity or shear viscosity). Molecular simulation can be seen as the computational version of statistical physics, and is therefore often used by practitioners of the field as a black box to extract the desired macroscopic properties from some model of interparticle interactions. Most of the work in the physics and chemistry fields therefore focuses on improving the microscopic description, most notably developing force fields of increasing complexity and accuracy. In comparison, less attention has been paid to the estimation of errors in the quantities actually computed by numerical simulation. Usually, due to the very high dimensionality of the systems under consideration, macroscopic properties are computed as ergodic averages over a very long trajectory of the system, evolved under some appropriate dynamics. There are two main types of errors in this approach: (i) statistical errors arising from incomplete sampling, and (ii) systematic errors (bias) arising from the fact that continuous dynamics are numerically integrated using a finite time-step~$\dt > 0$. 

The aim of this work is to reduce the bias arising from the use of finite time steps in the computation of dynamical quantities, for a certain type of dynamics called Brownian dynamics, or overdamped Langevin dynamics in the chemistry literature. Such dynamics are used to simulate ionic solutions (see~\cite{Jardat}). The methods we develop here can also be used to integrate the fluctuation/dissipation for numerical schemes based on a splitting of underdamped Langevin dynamics, in the case when the kinetic energy is not quadratic (see~\cite{RST16,ST16}). 

We denote by $\cM$ the configurational space, which is in this work a compact domain with periodic boundary conditions, such as $\mathbb{T}^{d}$ (with $\mathbb{T} = \mathbb{R} / \mathbb{Z}$ the one-dimensional torus). Unbounded configuration spaces $\R^d$ can also be considered under some assumptions (see Remark~\ref{rmk:generalization_unbounded}). The overdamped Langevin dynamics is a stochastic differential equation on the configuration~$q \in \cM$ of the system:
\begin{equation}
\label{eq:dynamics}
dq_t = -\beta \nabla V(q_t) \, dt + \sqrt{2} \, dW_t,
\end{equation}
where $\beta = 1/(k_{\rm B}T) > 0$ is the inverse temperature ($k_{\rm B}$ being Boltzmann's constant and $T$ being the temperature) and $W_t$ is a standard $d$-dimensional Brownian motion. The function $V:\mathcal{M}\to \mathbb{R}$ is the potential energy, assumed to be smooth for the mathematical analysis. The generator associated with~\eqref{eq:dynamics} acts on smooth functions $\varphi$ as 
\begin{equation}
  \label{eq:generator}
  \cL\varphi = -\beta \nabla V^T \nabla \varphi + \Delta \varphi.
\end{equation}
The probability measure
\begin{equation}
\label{eq:Gibbs}
\mu(dq) = Z^{-1} \,\rme^{-\beta V(q)} \, dq, \qquad Z = \int_\mathcal{M} \rme^{-\beta V}
\end{equation}
is invariant by the dynamics~\eqref{eq:dynamics} since a simple computation shows that, for appropriate functions~$\varphi$ (\textit{e.g.} smooth and such that $\cL\varphi$ is integrable with respect to~$\mu$),
\[
\int_\cM \cL \varphi \, d\mu = 0.
\]
Simple discretizations of~\eqref{eq:dynamics} may fail to be ergodic when the dynamics is considered on unbounded spaces and the potential energy function is not globally Lipschitz~\cite{MSH02}. In simulations of ionic solutions, potential energy functions with Coulomb-type singularities are used and it has been observed that the energy blows up (see for instance~\cite[Section~3.2]{FHS14}).

In order to stabilize numerical discretizations or simply to remove the bias in the invariant measure arising from the use of finite timesteps, it was suggested to consider the move obtained by a numerical scheme approximating~\eqref{eq:dynamics} as a proposal to be accepted or rejected according to a Metropolis-Hastings ratio~\cite{MRRTT53,Hastings70}. The corresponding method is known as ``Smart MC'' in the chemistry literature~\cite{RDF78}, and was rediscovered later on in the computational statistics literature~\cite{RT96} where it is known as ``Metropolis Adjusted Langevin Algorithm'' (MALA). The interest of the acceptance/rejection step is that it ensures that the Markov chain is reversible and samples the Gibbs measure~$\mu$. This prevents in particular blow-ups, which are observed for dynamics in unbounded position spaces when the forces are non-globally Lipschitz, or in periodic position spaces with singular potentials of Coulomb type~\cite{Jardat}. On the other hand, the use of an acceptance/rejection procedure limits the possible numerical schemes one can use as a proposal. Indeed, when generating a proposed move, the probability of coming back to the original configuration from the proposed move has to be computed, by evaluating the probability to observe the Gaussian increment necessary to perform the backward move. Therefore, it is unclear that proposals which are nonlinear in the Gaussian increment (such as the ones produced, say, by the Milstein's scheme, see~\cite{MT04} for instance) can be used, except in specific cases. 

The previous works on the numerical analysis of dynamical properties of MALA established (i)~strong error estimates over finite times~\cite{BV09}, and, as a consequence, errors on finite time correlations~\cite{BE12}; (ii)~exponential convergence rates towards the invariant measure, uniformly in the timestep~\cite{BH13} (which holds up to a small error term in $\dt$ for systems in infinite volume); (iii)~error estimates on the effective diffusion of the dynamics~\cite{FHS14}. 

\medskip

The aim of this work is to present new proposal functions, and also to advocate the use of acceptance/rejection rules different than the Metropolis one, in order to reduce the systematic bias in the estimation of dynamical quantities. More precisely, we propose in Section~\ref{sec:strong_order} a modified proposal which allows to obtain a strong error of order~1 rather than~3/4 for MALA (see Theorem~\ref{thm:improved_strong}). We also show in Section~\ref{sec:transport} that the error on the computed self-diffusion can be reduced from~$\dt$ for MALA to~$\dt^2$ provided a modified proposal is considered together with a Barker rule~\cite{Barker65} (see Theorems~\ref{thm:improved_GK} and~\ref{thm:fluctuation}). As discussed in Section~\ref{sec:multiplicative_diff}, the use of a Barker rule also reduces the bias on the self-diffusion for dynamics with multiplicative noise (\textit{i.e.} a non-trivial diffusion matrix in front the Brownian motion in~\eqref{eq:dynamics}). On the other hand, resorting to a Barker rule increases the statistical error in the simulation, roughly by a factor~2 since the rejection rate is 1/2 in the limit $\dt \to 0$ (as made precise in Remark~\ref{rmk:variance}). Such an increase in the variance was already shown in its full generality by Peskun in~\cite{Peskun73}. However, the reduction in the bias more than compensates the increase in the statistical error if one is interested in simulations with a given error tolerance (as discussed more precisely in Remark~\ref{rmk:compensation}). We provide numerical illustrations of our predictions at the end of Sections~\ref{sec:strong_order} and~\ref{sec:transport}. The proofs of all our results are gathered in Section~\ref{sec:proof}. As mentioned earlier, for simplicity we shall work under the following assumption: 

\begin{assumption}
  The state spaces~$\cM$ is compact and has periodic boundary conditions, and the potential~$V$ is smooth.
\end{assumption}

Compact spaces with periodic boundary conditions anyway is the relevant setting for self-diffusion (which is the focus of Section~\ref{sec:transport}) since there is no effective diffusion for dynamics in unbounded spaces with confining potentials. For the strong error estimates provided in Section~\ref{sec:strong_order}, we could have considered unbounded position spaces. We chose to present our results for compact position spaces since this allows to simplify some technical elements of the proofs. See however Remark~\ref{rmk:generalization_unbounded} for the extension to the noncompact setting.

\section{Improving the strong order}
\label{sec:strong_order}

In this section, we show how a suitable modification of the proposal leads to a Metropolized scheme with a better strong accuracy than the standard MALA scheme. As was realized in~\cite{BV09}, the local error over one timestep arises at dominant order from the rejections and not from the integration errors of the accepted move. The strategy to improve the strong error is therefore to add terms of lower order (in the timestep) to the proposal, which will not have a significant impact when the proposal is accepted, but which will significantly reduce the average rejection rate. As made precise in Lemma~\ref{lem:improved_rejection_rate}, the rejection rate scales as $\Delta t^{5/2}$ for the modified dynamics, instead of $\Delta t^{3/2}$ for MALA. Let us also mention that, instead of considering the modified dynamics we propose (see~\eqref{eq:modified_proposal_strong_order} below), we could in fact consider dynamics with an arbitrarily low rejection rate, see Remark~\ref{rmk:HMC}. However, this would not improve the strong error estimates we obtain any further.

We start by recalling the known error estimates for MALA in Section~\ref{sec:MALA}, before presenting the error bounds obtained for our modified dynamics in Section~\ref{eq:strong_modified}. Our theoretical predictions are illustrated by numerical simulations in Section~\ref{sec:num_strong}. 

\subsection{Strong error estimates for the standard MALA algorithm}
\label{sec:MALA}

Let us first recall the definition of the MALA scheme. It is a Metropolis-Hastings algorithm whose proposal function is obtained by a Euler-Maruyama discretization of the dynamics~\eqref{eq:dynamics}. Given a timestep $\dt > 0$ and a previous configuration~$q^n$, the proposed move is 
\[
\widetilde{q}^{n+1} = q^n - \beta \dt\, \nabla V(q^n) + \sqrt{2\dt} \, G^n,
\]
where (here and elsewhere) $(G^n)_{n \geq 0}$ is a sequence of independent and identically distributed (i.i.d.) $d$-dimensional standard Gaussian random variables. For further purposes, it will be convenient to encode proposals using a function $\Phi_\dt(q,G)$ depending on the previous position~$q$ and the Gaussian increment~$G$ used to generate the proposed move. For the above Euler-Maruyama proposal, $\widetilde{q}^{n+1} = \Phi^\EM_\dt(q^n,G^n)$ with
\begin{equation}
  \label{eq:EM}
  \Phi^\EM_\dt(q,G) = q - \beta \dt\, \nabla V(q) + \sqrt{2\dt} \, G.
\end{equation}
We next accept or reject the proposed move according to the Metropolis-Hastings ratio $A_\dt\left(q^n,\widetilde{q}^{n+1}\right)$:
\begin{equation} 
  \label{def_accept_rate}
  A_\dt\left(q^n,\widetilde{q}^{n+1}\right) = \min\left( \frac{\rme^{-\beta V(\widetilde{q}^{n+1})}T_\dt(\widetilde{q}^{n+1},q^n)}{\rme^{-\beta V(q^n)}T_\dt(q^n,\widetilde{q}^{n+1})},1\right),
\end{equation}
where
\[
T_\dt(q,q') = \left(\frac{1}{4\pi\dt}\right)^{d/2} \exp \left (
-\frac{|q'-q+ \beta \dt \, \nabla V(q)|^2}{4\dt} \right )
\]
is the probability transition of the Markov chain encoded by~\eqref{eq:EM}. 
When the proposition is accepted, which is decided with probability $A_{\dt}\left(q^n,\widetilde{q}^{n+1}\right)$, we project~$\widetilde{q}^{n+1}$ into the periodic simulation cell~$\cM$. If the proposal is rejected, the previous configuration is counted twice: $q^{n+1} = q^n$ (It is very important to count rejected configuration as many times as needed to ensure that the Boltzmann-Gibbs measure~$\mu$ is invariant). In conclusion,
\begin{equation}
\label{eq:mEM}
q^{n+1} = \Psi_\dt(q^n,G^n,U^n) = q^n + \ind_{U^n \leq A_\dt\left(q^n,\Phi^\EM_\dt\left(q^n,G^n\right)\right)} \Big(\Phi^\EM_\dt\left(q^n,G^n\right)-q^n\Big),
\end{equation}
where $(U^n)_{n \geq 0}$ is a sequence of i.i.d. random variables uniformly distributed in~$[0,1]$, and $\ind_{U^n \leq \alpha}$ is an indicator function whose value is~1 when $U^n \leq \alpha$ and~0 otherwise. 

It is easy to see that the Markov chain is irreducible with respect to the Lebesgue measure and that $\mu$ is an invariant probability measure. It is therefore ergodic, and in fact reversible with respect to~$\mu$ (see for instance the references in~\cite[Section~2.2]{LRS10}). In particular, this guarantees that the scheme does not blow up.

\medskip

The following result on the strong convergence of MALA on finite time intervals has been obtained in~\cite[Theorem~2.1]{BV09}. We state the results for dynamics in compact spaces although it has been obtained for dynamics in unbounded spaces, under some additional assumptions on the potential energy function. This allows us to write strong error estimates which are uniform with respect to the initial condition~$q_0$, rather than average errors for initial conditions distributed according to the canonical measure~$\mu$ (a restriction arising from the lack of geometric ergodicity for MALA in the case of unbounded spaces, see~\cite{RT96}).

\begin{theorem}[Strong convergence of MALA~\cite{BV09}]
  \label{thm:old_error_strong}
  Consider the Markov chain 
  \[
  q^{n+1} = \Psi_\dt(q^n,W_{(n+1)\dt}-W_{n\dt},U^n)
  \]
  started from $q^0 = q_0$, and denote by $q_t^{\Delta t}$ the piecewise constant function defined as $q^{\Delta t}_t = q^n$ for $t \in [n\Delta t, (n+1)\Delta t)$. Then, there exists $\Delta t^* > 0$ and, for any $T > 0$, a constant $C(T) > 0$ such that, for any $0 < \Delta t \leq \Delta t^*$ and all $t \in [0, T]$, and for all $q_0 \in \cM$,
    \[
    \left(\mathbb{E}_{q_0}\left[\left|q^{\Delta t}_t - q_t\right|^2\right]\right)^{1/2} \leq C(T) \, \dt^{3/4},
    \]
    where the expectation is over all realizations of the Brownian motion $(W_t)_{0 \leq t \leq T}$. 
\end{theorem}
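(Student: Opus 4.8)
The plan is to couple the chain with the diffusion by driving both with the same Brownian motion, so that the increment generating the $n$-th proposal satisfies $\sqrt{2\dt}\,G^n = \sqrt{2}\,(W_{(n+1)\dt}-W_{n\dt})$, and then to control the grid-point error $e^n := q^n - q_{n\dt}$ in mean square before passing to general $t$. Using $\Phi^\EM_\dt(q^n,G^n) - q^n = -\beta\dt\,\nabla V(q^n) + \sqrt{2\dt}\,G^n$ and noting that this Brownian term cancels the corresponding increment of $q_t$, one obtains a recursion of the schematic form $e^{n+1} = e^n + \beta\int_{n\dt}^{(n+1)\dt}\big(\nabla V(q_s)-\nabla V(q^n)\big)\,ds + R^n$, where $R^n := \ind_{\mathrm{rej}}^n\big(\beta\dt\,\nabla V(q^n) - \sqrt{2\dt}\,G^n\big)$ collects the effect of the rejections and $\ind_{\mathrm{rej}}^n := \ind_{U^n > A_\dt(q^n,\Phi^\EM_\dt(q^n,G^n))}$. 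Compactness of $\cM$ and smoothness of $V$ make $\nabla V$ and all its derivatives bounded and give the Gaussian increments finite moments, so no integrability problem arises and all constants below depend only on $T$ and on bounds for $V$.

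In the integral term I would write $\nabla V(q_s)-\nabla V(q^n) = \big(\nabla V(q_s)-\nabla V(q_{n\dt})\big) + \big(\nabla V(q_{n\dt})-\nabla V(q^n)\big)$. The second piece is $O(|e^n|)$ by the Lipschitz bound on $\nabla V$ and produces the $(1+C\dt)$ amplification of the Gronwall estimate; the first piece is the usual Euler--Maruyama local truncation error, whose accumulated contribution to $\EE[|e^n|^2]$ is $O(\dt^2)$ (global strong order $1$ for additive noise). This last quantity is negligible against the target $\dt^{3/2}$ for $\EE[|e^n|^2]$, which is the reason, anticipated in the text, that the dominant error is produced entirely by $R^n$.

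The heart of the argument, and the main obstacle, is a sharp control of $R^n$. I would Taylor expand the logarithm of the Metropolis--Hastings ratio $A_\dt$ in powers of $\sqrt{\dt}$ and use the MALA-specific cancellation of the $O(\sqrt\dt)$ and $O(\dt)$ contributions to show that the conditional rejection probability is $\EE[\ind_{\mathrm{rej}}^n \mid \mathcal F_n] = O(\dt^{3/2})$ uniformly, together with the refined moment bounds $\EE[\ind_{\mathrm{rej}}^n |G^n|^2 \mid \mathcal F_n] = O(\dt^{3/2})$ and $\EE[\ind_{\mathrm{rej}}^n G^n \mid \mathcal F_n] = O(\dt^{3/2})$. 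These follow from writing the leading rejection probability as $\dt^{3/2}\,p(q^n,G^n)$ with $p$ of at most polynomial growth in $G^n$ and integrating against the Gaussian law. Granting them, the jump has size $O(\sqrt\dt)$ and occurs with probability $O(\dt^{3/2})$, so $\EE[|R^n|^2 \mid \mathcal F_n] = O(\dt^{5/2})$, whereas the conditional mean is much smaller, $|\EE[R^n \mid \mathcal F_n]| = O(\dt^2)$; the gain here comes precisely from the third estimate, which replaces the lossy Cauchy--Schwarz bound $O(\dt^{3/4})$ on $\EE[\ind_{\mathrm{rej}}^n G^n \mid \mathcal F_n]$.

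Squaring the recursion and taking expectations, I would bound the cross term by Young's inequality as $\dt\,\EE[|e^n|^2] + \dt^{-1}\,|\EE[R^n\mid\mathcal F_n]|^2 = \dt\,\EE[|e^n|^2] + O(\dt^3)$, while the centred part of $R^n$ contributes $O(\dt^{5/2})$ per step; this gives $\EE[|e^{n+1}|^2] \leq (1+C\dt)\,\EE[|e^n|^2] + C\dt^{5/2}$, and summing the $N = T/\dt$ steps yields $\EE[|e^n|^2] \leq C(T)\,\dt^{3/2}$, that is the grid-point error $\dt^{3/4}$. The delicate point is that the bias $\EE[R^n\mid\mathcal F_n]$ must be $O(\dt^2)$, not merely the naive $O(\dt^{5/4})$, for its accumulation over $T/\dt$ steps to stay below $\dt^{3/4}$; this is exactly where the refined rejection estimates are indispensable. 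Passing to arbitrary $t \in [0,T]$ is then routine, using the within-step estimate $\EE[|q_s-q_{n\dt}|^2]=O(\dt)$ and comparing with the continuous interpolation of the scheme that tracks the Brownian increment over each step (with the strictly piecewise-constant interpolation the within-step fluctuation of $q_t$ would instead enter at order $\dt^{1/2}$).
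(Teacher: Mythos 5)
Your argument is correct and in substance coincides with the route the paper relies on: Theorem~\ref{thm:old_error_strong} is recalled from~\cite{BV09}, and the paper reproduces the same template in Section~\ref{sec:proof:thm:improved_strong} for the analogous improved result (Theorem~\ref{thm:improved_strong}). The quantitative core is identical to yours — the log-Metropolis ratio expands as $\dt^{3/2}\xi_{3/2}(q,G)+\dots$ with $\xi_{3/2}$ of polynomial growth in $G$, so that the rejection indicator weighted by polynomials in the Gaussian increment has conditional expectation $O(\dt^{3/2})$ (this is your ``refined moment bounds'' and the content of \cite[Lemma~4.7]{BV09}, mirrored by Lemma~\ref{lem:improved_rejection_rate} for the modified scheme), yielding a per-step bias $O(\dt^{2})$ and per-step mean-square error $O(\dt^{5/2})$, closed by a discrete Gronwall inequality. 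The only genuine difference is bookkeeping: you subtract the SDE from the scheme directly so that the shared Brownian increments cancel and the propagation term is just the Lipschitz drift difference, whereas the paper (following~\cite{BV09}) proves one-step local accuracy lemmas for the metropolized step and propagates them through the exact flow map $Q_{t_{k+1},t_k}$ via \cite[Lemma~4.3]{BV09}; your version is slightly more elementary but leans on the additive-noise structure, while the flow-map version transfers to more general settings. Your closing caveat is also well taken: with the strictly piecewise-constant interpolant the off-grid error is dominated by the within-step Brownian fluctuation of $q_t$ and is only $O(\dt^{1/2})$, so the stated bound should be read at grid times $t=n\dt$ or for a continuous interpolant carrying the Brownian increment, as in~\cite{BV09}.
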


Numerical simulations confirm that the exponent $3/4$ is sharp (see~\cite[Section~3.1]{BV09} and Section~\ref{sec:num_strong} below). Let us emphasize that, in order to have correct strong error estimates, the Gaussian increments in the numerical scheme have to be consistent with the realization of the Brownian motion used to construct the solution of the continuous stochastic dynamics. 

\subsection{Strong error estimates for a modified dynamics}
\label{eq:strong_modified}

The un-Metropolized Euler scheme has strong order~1 for dynamics with additive noise such as~\eqref{eq:dynamics}. In order to improve the convergence rate $\dt^{3/4}$ given by Theorem~\ref{thm:old_error_strong}, we modify~\eqref{eq:EM} with terms of order~$\dt^{3/2}$ and~$\dt^2$:
\begin{equation}
  \label{eq:modified_proposal_strong_order}
  \widetilde{q}^{n+1} = q^n + \dt \Big(-\beta \nabla V(q^n) + \dt \, F(q^n) \Big) + \sqrt{2\dt}\Big(\Id + \dt \, \sigma(q^n) \Big)^{-1/2} G^n.
\end{equation}
This can be encoded by the proposal function
\[
\Phi^\mod_\dt(q,G) = \Phi^\EM_\dt(q,G) + \dt^2 F(q) + \sqrt{2\dt} \left[\left(\Id+\dt \, \sigma(q)\right)^{-1/2} - \Id \right] G.
\]
The above definitions are formal since the matrix $\Id + \dt \, \sigma(q^n)$ should be symmetric definite positive in order for the inverse square root to make sense. As made clear below, this is indeed the case for sufficiently small timesteps since the matrix valued function~$\sigma$ is smooth hence bounded on~$\cM$ (see Remark~\ref{rmk:generalization_unbounded} for unbounded spaces). 

\begin{remark}[Alternative expressions of the diffusion matrix]
It is possible to consider other diffusion matrices which agree with $\left(\Id+\dt \, \sigma(q)\right)^{-1/2}$ at dominant order in~$\dt$, such as $\Id-\dt \, \sigma(q)/2$ which need not be inverted but may be negative for large values of~$\dt$. In~\cite{Durmus}, the authors suggest to consider matrices such as $\exp(-\dt \, \sigma(q)/2)$. The latter matrix is indeed always non-negative but may be cumbersome to compute in practice. On unbounded position spaces however, $\sigma$ may be unbounded from below, so that, no matter how small $\dt$ is, there may be configurations at which $\Id + \dt \, \sigma(q)$ is non-invertible. In this case, $\exp(-\dt \, \sigma(q)/2)$ should be considered instead. Another interest of such choices is that geometric ergodicity results can be obtained for dynamics on unbounded spaces, see~\cite{Durmus}. 
\end{remark}

The Metropolis-Hastings ratio associated with the proposal~\eqref{eq:modified_proposal_strong_order} reads
\begin{equation}
\label{eq:Metropolis_ratio_strong}
A^\mod_\dt\left(q^n,\widetilde{q}^{n+1}\right) = \min\left( \frac{\rme^{-\beta V(\widetilde{q}^{n+1})}T^\mod_\dt(\widetilde{q}^{n+1},q^n)}{\rme^{-\beta V(q^n)}T^\mod_\dt(q^n,\widetilde{q}^{n+1})},1\right),
\end{equation}
with a transition rate taking into account the spatial dependence of the diffusion:
\begin{align}
T^\mod_\dt&(q,q') := \left(\frac{1}{4\pi\dt}\right)^{d/2} \det\Big(\Id + \dt \, \sigma(q) \Big)^{1/2} \notag \\
& \times \exp\left(-\frac{ \Big(q'-q+ \dt \left[\beta \nabla V(q) - \dt F(q)\right]\Big)^T \Big(\Id + \dt\,\sigma(q)\Big) \Big(q'-q+ \dt \left[\beta \nabla V(q) - \dt F(q)\right]\Big) }{4\dt} \right ).
\end{align}
We denote by $\Psi_\dt^\mod(q,G,U)$ the random variable obtained by a single step of the modified MALA scheme~\eqref{eq:modified_proposal_strong_order} starting from $q$:
\[
q^{n+1} = \Psi^\mod_\dt(q^n,G^n,U^n),
\]
with
\begin{equation}
\label{eq:def_Psimod}
\Psi^\mod_\dt(q,G,U) = q + \ind_{U \leq A^\mod_\dt\left(q,\Phi_\dt^\mod(q,G)\right)}\left(\Phi_\dt^\mod(q,G) - q \right).
\end{equation}
We can then state the following strong error estimate (see Section~\ref{sec:proof:thm:improved_strong} for the proof).

\begin{theorem}[Improved strong error estimates]
\label{thm:improved_strong}
For a smooth potential~$V$ on the compact space~$\cM$, choose $\sigma$ and $F$ as follows:
\begin{equation}\label{choices_mod_mala}
\sigma = \frac{\beta}{3}\nabla^2 V, 
\qquad 
F = \frac{\beta}{6}\Big( \nabla (\Delta V) - \beta \nabla^2 V \nabla V \Big). 
\end{equation}
Consider, for $\dt < 1/\| \sigma \|_{L^\infty}$, the Markov chain associated with the modified proposal~\eqref{eq:modified_proposal_strong_order} corrected by a Metropolis rule: $q^{n+1} = \Psi^\mod_\dt(q^n,W_{(n+1)\dt}-W_{n\dt},U^n)$, and denote by $q_t^{\Delta t, \mod}$ the piecewise constant function defined as $q^{\Delta t, \mod}_t = q^n$ for $t \in [n\Delta t, (n+1)\Delta t)$. Then, there exists $\dt^* > 0$ and, for any $T > 0$, a constant $C(T) > 0$ such that, for any $0 < \dt \leq \dt^*$ and $t \in [0,T]$, and for all $q_0 \in \cM$, 
\[
\left(\mathbb{E}_{q_0}\left[\left|q^{\Delta t, \mod}_t - q_t\right|^2\right]\right)^{1/2} \leq C(T)\dt.
\]
\end{theorem}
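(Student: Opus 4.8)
The plan is to compare the Metropolized chain $q^n = q^{\dt,\mod}_{n\dt}$ with the reference solution $q_t$ by interposing the un-Metropolized chain $\bar q^n$, i.e. the chain obtained by always accepting the proposal \eqref{eq:modified_proposal_strong_order}, driven by the same Gaussian increments $G^n = (W_{(n+1)\dt}-W_{n\dt})/\sqrt\dt$. The triangle inequality in $L^2$ gives $\big(\EE_{q_0}[|q^{\dt,\mod}_t - q_t|^2]\big)^{1/2} \leq \big(\EE_{q_0}[|\bar q_t - q_t|^2]\big)^{1/2} + \big(\EE_{q_0}[|q^{\dt,\mod}_t - \bar q_t|^2]\big)^{1/2}$, and it suffices to bound each term by $C(T)\dt$. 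A preliminary step records that, for $\dt < 1/\|\sigma\|_{L^\infty}$ and $\sigma = \frac\beta3\nabla^2 V$ bounded on the compact $\cM$, the matrix $\Id + \dt\,\sigma(q)$ is symmetric positive definite, so $(\Id+\dt\,\sigma(q))^{-1/2}$ and the density $T^\mod_\dt$ are well defined and smooth, uniformly in $q$.

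For the first term (integration error of the un-Metropolized scheme), I would establish strong order~$1$ by the usual local-error analysis. Writing the exact increment as $\sqrt2(W_{(n+1)\dt}-W_{n\dt}) = \sqrt{2\dt}\,G^n$, the noise of \eqref{eq:modified_proposal_strong_order} matches the Euler--Maruyama noise up to $\sqrt{2\dt}\big[(\Id+\dt\,\sigma)^{-1/2}-\Id\big]G^n = O(\dt^{3/2})(1+|G^n|)$, a centered $\mathcal F_n$-conditional increment, while the drift differs from \eqref{eq:EM} only by $\dt^2 F = O(\dt^2)$. Hence the one-step error of \eqref{eq:modified_proposal_strong_order} against the exact flow has conditional mean $O(\dt^2)$ and $L^2$ size $O(\dt^{3/2})$, exactly as for Euler--Maruyama with additive noise. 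The deterministic parts accumulate linearly ($N\cdot O(\dt^2)=O(\dt)$ with $N=T/\dt$) and the centered parts accumulate as a sum of martingale differences ($(N\cdot O(\dt^{3}))^{1/2}=O(\dt)$), so discrete Gronwall stability of the smooth flow yields $\|\bar q_t - q_t\|_{L^2}\leq C(T)\dt$.

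For the second term (rejection discrepancy), coupling the two chains through the same $(G^n,U^n)$ and setting $\delta^n = q^n - \bar q^n$, a one-step expansion yields $|\delta^{n+1}| \leq |\delta^n|\big(1 + C\dt(1+|G^n|)\big) + \ind_{U^n > A^\mod_\dt(q^n,\Phi^\mod_\dt(q^n,G^n))}\,C\sqrt\dt\,(1+|G^n|)$, the first term reflecting the $O(\dt)$-Lipschitz dependence of $\Phi^\mod_\dt$ on position and the second the move of size $O(\sqrt\dt\,(1+|G^n|))$ lost when the proposal is rejected (see \eqref{eq:def_Psimod}). Solving this linear recursion gives $|\delta^N| \leq S\,C\sqrt\dt\,X$ with $X := \sum_{n=0}^{N-1}\ind_{\mathrm{rej}^n}(1+|G^n|)$ and a stability factor $S \leq \exp\big(C\dt\sum_k(1+|G^k|)\big)$ whose moments are bounded uniformly in $\dt$ by Gaussian concentration (as $\dt\sum_k|G^k|$ is an empirical mean over $N=T/\dt$ terms). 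Taking $L^2$ norms, I would separate $S$ from $X$ by Hölder's inequality and estimate the moments of $X$ by expanding the power and invoking the conditional rejection estimate of Lemma~\ref{lem:improved_rejection_rate} in its Gaussian-weighted form $\EE_G\big[(1-A^\mod_\dt(q,\Phi^\mod_\dt(q,G)))(1+|G|)^p\big] = O(\dt^{5/2})$ (uniform in $q\in\cM$, for every fixed $p$, which comes out of the same Taylor expansion that dictates the choices \eqref{choices_mod_mala}). The diagonal contributions are then $N\cdot O(\dt^{5/2}) = O(\dt^{3/2})$, while the off-diagonal ones, controlled by the tower property and the per-step conditional rate, are $O(N^2\dt^5)=O(\dt^3)$. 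Combining, $\EE_{q_0}[|\delta^N|^2] = O(\dt^2)$, so $\|q^{\dt,\mod}_t - \bar q_t\|_{L^2} = O(\dt)$, and the two terms together give the claim.

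The delicate point is this second term: neither the crude bound $|\delta^N|\leq\mathrm{diam}(\cM)$ weighted by $\PP(\text{at least one rejection})\lesssim\EE[\sum_n\ind_{\mathrm{rej}^n}]=O(\dt^{3/2})$, nor a Cauchy--Schwarz split using only the unweighted rate $O(\dt^{5/2})$, is sharp enough --- both stall at order $3/4$, exactly as for MALA (Theorem~\ref{thm:old_error_strong}). What makes order~$1$ work is that the rejection probability conditional on $G$ is, at dominant order, a fixed polynomial in $G$ multiplied by $\dt^{5/2}$, so the Gaussian moments generated by the lost-move size $\sqrt\dt\,(1+|G|)$ and by the stability factor $S$ do not degrade the $\dt^{5/2}$ scaling. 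Establishing this Gaussian-weighted rejection estimate uniformly in $q\in\cM$, and disentangling the correlations between $S$, the increments, and the rejection events, is where the real work lies; the construction \eqref{choices_mod_mala} of $\sigma$ and $F$ is precisely what cancels the $\dt^{3/2}$ and $\dt^2$ contributions to the log-acceptance ratio and leaves this $\dt^{5/2}$ remainder.
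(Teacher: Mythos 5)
Your argument is correct and reaches the stated rate, but it takes a genuinely different route from the paper's proof in Section~\ref{sec:proof:thm:improved_strong}. The paper compares the Metropolized chain directly to the exact flow through the classical local-to-global machinery: it first proves one-step estimates for the un-Metropolized proposal (Lemma~\ref{1step_em}: $L^2$ error $\mathrm{O}(\dt^{3/2})$, bias $\mathrm{O}(\dt^2)$), transfers them to the Metropolized map $\Psi^\mod_\dt$ (Lemma~\ref{loc_ac}) by Cauchy--Schwarz against the $(1-A^\mod_\dt)^2$ moment of Lemma~\ref{lem:improved_rejection_rate} and the moment bound $\mathbb{E}_{q_0}[|q_0-q_\dt|^{2\ell}]\leq K_\ell\dt^\ell$, and then closes the discrete Gronwall recursion $\varepsilon_{k+1}\leq(1+K_1\dt)\varepsilon_k+K_2\dt^3$, handling the cross term by conditioning on $\mathcal{F}_k$. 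You instead interpose the always-accept chain: the first leg is the standard strong-order-one estimate for the modified Euler scheme (essentially Lemma~\ref{1step_em} plus Gronwall), and the second leg is a pathwise coupling of the accepted and always-accepted chains, solved as a linear recursion whose source is the total rejected displacement $X$ weighted by a stability factor $S$. Both proofs rest on the same key input, the $\dt^{5/2}$ rejection rate of Lemma~\ref{lem:improved_rejection_rate} in its Gaussian-weighted form $\mathbb{E}_G[(1-A^\mod_\dt)(1+|G|)^p]=\mathrm{O}(\dt^{5/2})$ (which follows at once from the stated $\mathbb{E}_G[(1-A^\mod_\dt)^{2\ell}]\leq C_\ell\dt^{5\ell}$ by Cauchy--Schwarz, so you need not redo the Taylor expansion), and your diagnosis of why cruder uses of the rejection rate stall at order $3/4$ is exactly right. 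Your route buys a clean separation of the integration error from the rejection error, at the price of the H\"older disentangling of $S$ from $X$: since $\mathbb{E}[X^{m}]=\mathrm{O}(\dt^{3/2})$ for every $m\geq 1$ (diagonal terms dominate), one has $\mathbb{E}[X^{2r}]^{1/r}=\mathrm{O}(\dt^{3/(2r)})$, which is $\mathrm{O}(\dt)$ only for $r\leq 3/2$; the exponents must therefore be chosen as, e.g., $\mathbb{E}[S^6]^{1/3}\mathbb{E}[X^3]^{2/3}$, a bookkeeping point your sketch glosses over but which does close. The paper's route avoids this product of stability factor and rejection indicators altogether; finally, note that both proofs (the paper's included) really control the error only at the grid times $t_k=k\dt$, which is all the Gronwall recursion addresses.
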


Let us mention that the modified scheme~\eqref{eq:modified_proposal_strong_order} with the choice~\eqref{choices_mod_mala} is more difficult to implement in practice than the standard Euler-Maruyama scheme since it requires the computation of derivatives of order up to~3 of the potential, which is often cumbersome in molecular dynamics. The crucial estimate to prove Theorem~\ref{thm:improved_strong} is the following bound on the rejection rate, which makes precise the fact that the rejection rate scales as $\dt^{5/2}$ rather than~$\dt^{3/2}$ as for MALA (see Section~\ref{sec:proof:improved_rejection_rate} for the proof). As the proof shows, there is some algebraic miracle since the corrections terms, chosen to eliminate the dominant contribution of order~$\dt^{3/2}$ in the rejection rate, in fact also eliminate the next order contribution of order~$\dt^2$.

\begin{lemma} 
\label{lem:improved_rejection_rate}
For any $\ell \geq 1$, there exists a constant $C_{\ell} > 0$ such that
\[
\forall q \in \R^d, 
\qquad 
\mathbb{E}_G\left(\left[1 - A^\mod_\dt\left(q, \Phi_\dt^\mod(q,G)\right)\right]^{2\ell}\right) \leq C_{\ell}\dt^{5\ell}.
\]
\end{lemma}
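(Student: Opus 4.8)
The plan is to reduce the moment bound on the rejection probability to a moment bound on the logarithm of the Metropolis--Hastings ratio, and then to Taylor-expand this logarithm in powers of $\sqrt{\dt}$ so as to exhibit the cancellation of all contributions below order $\dt^{5/2}$. Writing $R^\mod_\dt(q,q')$ for the ratio appearing inside the minimum in~\eqref{eq:Metropolis_ratio_strong}, we have $1 - A^\mod_\dt(q,q') = \left(1 - R^\mod_\dt(q,q')\right)_+$, and since $(1-\rme^{x})_+ \leq |x|$ for all $x \in \RR$ (because $\rme^x \geq 1+x$), it suffices to bound the $2\ell$-th moment of
\[
\mathcal{R}_\dt(q,G) := \log R^\mod_\dt\big(q, \Phi_\dt^\mod(q,G)\big).
\]
First I would make $\mathcal{R}_\dt$ explicit. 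Setting $q' = \Phi_\dt^\mod(q,G)$ and $\delta = q'-q$, the ``forward'' increment is exactly $m(q,q') := q'-q+\dt[\beta\nabla V(q) - \dt F(q)] = \sqrt{2\dt}(\Id+\dt\,\sigma(q))^{-1/2}G$, so the forward quadratic form collapses to $m(q,q')^T(\Id+\dt\,\sigma(q))\,m(q,q') = 2\dt\,|G|^2$. The Gaussian prefactors then cancel and
\begin{align}
\mathcal{R}_\dt(q,G) = {}& -\beta\big[V(q')-V(q)\big] + \frac{1}{2}\log\frac{\det(\Id+\dt\,\sigma(q'))}{\det(\Id+\dt\,\sigma(q))} + \frac{1}{2}|G|^2 \notag \\
& {} - \frac{1}{4\dt}\,m(q',q)^T\big(\Id+\dt\,\sigma(q')\big)\,m(q',q), \notag
\end{align}
with the ``backward'' increment $m(q',q) = -\delta + \dt[\beta\nabla V(q') - \dt F(q')]$.

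The core of the argument is the expansion. Since $\delta = -\beta\dt\,\nabla V(q) + \dt^2 F(q) + \sqrt{2\dt}(\Id+\dt\,\sigma(q))^{-1/2}G$, both $\delta/\sqrt{\dt}$ and $m(q',q)/\sqrt{\dt}$ are smooth functions of $s := \sqrt{\dt}$ near $s=0$ (for $\dt^* < 1/\|\sigma\|_{L^\infty}$, so that $\Id+\dt\,\sigma \succ 0$), and the singular prefactor $-\frac{1}{4\dt} = -\frac{1}{4s^2}$ multiplies a quantity that vanishes like $s^2$; hence $\mathcal{R}_\dt$ is a smooth function of $s$ in a neighbourhood of $0$. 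Taylor's theorem gives $\mathcal{R}_\dt(q,G) = \sum_{k=0}^{4}\dt^{k/2}c_k(q,G) + \dt^{5/2}\rho_\dt(q,G)$, where the coefficients of $\delta$ at orders $\dt^{1/2},\dt,\dt^{3/2},\dt^2$ are respectively $\sqrt{2}\,G$, $-\beta\nabla V(q)$, $-\tfrac{1}{\sqrt{2}}\sigma(q)G$ and $F(q)$. The terms $c_0,c_1,c_2$ vanish exactly as for plain MALA: the $\dt^0$ piece of the backward quadratic form cancels $\tfrac{1}{2}|G|^2$, and the order-$\dt^{1/2}$ and order-$\dt$ pieces of $-\beta[V(q')-V(q)]$ cancel those of the backward form. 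The new content is that $\sigma = \tfrac{\beta}{3}\Hess V$ is precisely what annihilates the cubic third-derivative term $c_3$ left by MALA (the determinant term supplies a contribution proportional to $\nabla(\Delta V)^T G$ at order $\dt^{3/2}$, and the backward quadratic form supplies the remaining third-derivative contractions of $G$), while the same $\sigma$ together with $F = \tfrac{\beta}{6}\big(\nabla(\Delta V) - \beta\,\Hess V\,\nabla V\big)$ simultaneously annihilates the even term $c_4$ at order $\dt^2$.

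I expect the verification that $c_3 \equiv 0$ and $c_4 \equiv 0$ to be the main obstacle: it is a lengthy but purely algebraic computation in which one tracks every monomial in $G$ up to degree four, with coefficients assembled from $\nabla V$, $\Hess V$ and their first derivatives, and checks that the prescribed $\sigma$ and $F$ kill each monomial separately — this being the ``algebraic miracle'' that the choice of $\sigma$ cancelling $c_3$ also removes $c_4$. Once these cancellations are secured, the conclusion is routine. The remainder $\rho_\dt(q,G)$ is, by the Lagrange form of the Taylor remainder in $s$, a polynomial in $G$ of bounded degree whose coefficients are controlled by $\sup_\cM$ of finitely many derivatives of $V$, $\sigma$ and $F$ (finite by smoothness and periodicity of the potential on the compact $\cM$), so $|\rho_\dt(q,G)| \leq P(|G|)$ uniformly in $q$ and in $0<\dt\leq\dt^*$ for some fixed polynomial $P$. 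Therefore $|\mathcal{R}_\dt(q,G)| \leq \dt^{5/2}P(|G|)$, and taking the $2\ell$-th moment yields
\[
\mathbb{E}_G\Big(\big[1-A^\mod_\dt(q,\Phi_\dt^\mod(q,G))\big]^{2\ell}\Big) \leq \mathbb{E}_G\big(|\mathcal{R}_\dt(q,G)|^{2\ell}\big) \leq \dt^{5\ell}\,\mathbb{E}_G\big(P(|G|)^{2\ell}\big) = C_\ell\,\dt^{5\ell},
\]
the final expectation being finite since Gaussian polynomial moments are finite, which is the claim.
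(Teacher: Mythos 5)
Your strategy coincides with the paper's own proof: both reduce $1-A^\mod_\dt$ to the logarithm of the Metropolis ratio via the elementary bound $0 \leq 1-\min(1,\rme^{-x}) \leq |x|$, expand that logarithm in powers of $\sqrt{\dt}$, argue that the stated $\sigma$ and $F$ annihilate every contribution below order $\dt^{5/2}$, and bound the remainder by a fixed polynomial in $|G|$ (uniformly in $q$ on the compact $\cM$) before taking Gaussian moments. Your observation that the forward quadratic form collapses exactly to $2\dt\,|G|^2$ is a clean way to organize the ratio, and your identification of which pieces enter at orders $\dt^{3/2}$ and $\dt^{2}$ (the cubic $D^3V\cdot G^{\otimes 3}$ term, the $\nabla(\Delta V)^T G$ contribution from the log-determinant, the backward quadratic form) matches the paper's bookkeeping.

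The gap is that the step you yourself flag as ``the main obstacle'' --- verifying that $c_3 \equiv 0$ and $c_4 \equiv 0$ pointwise in $(q,G)$ for $\sigma = \frac{\beta}{3}\nabla^2 V$ and $F = \frac{\beta}{6}\big(\nabla(\Delta V) - \beta\nabla^2 V\nabla V\big)$ --- is announced rather than carried out, and that computation is essentially the entirety of the paper's proof. The vanishing of the order-$\dt^{2}$ coefficient is not implied by the vanishing of the order-$\dt^{3/2}$ coefficient through any structural principle: the paper itself calls it an unexpected ``algebraic miracle'' and notes that cancelling the middle block of $\overline{\alpha}_{2}$ (the terms quadratic and quartic in $G$ involving $D^4V$, $D^2\sigma$, $DF$, $(\nabla^2V)^2$ and $D\sigma\cdot\nabla V$) requires genuine additional manipulation beyond the choice that kills the cubic term. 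Until that monomial-by-monomial verification is written out, the lemma is not proved. Everything else in your argument --- the reduction to $|\alpha_\dt|$, the smoothness in $s=\sqrt{\dt}$ for $\dt < 1/\|\sigma\|_{L^\infty}$, the polynomial control of the Lagrange remainder, the final moment bound --- is correct and is exactly how the paper concludes once the cancellation is in hand.
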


Let us end this section with the following considerations on the extension of the improved strong error estimates to unbounded spaces.

\begin{remark}[Generalization to unbounded spaces]
\label{rmk:generalization_unbounded}
It is possible to extend the results of Theorem~\ref{thm:improved_strong} to unbounded spaces under appropriate assumptions on the potential, such as~\cite[Assumption~4.1]{BV09} and~\cite[Assumptions~2.1]{BH13}. These assumptions ensure that the potential energy function grows sufficiently fast at infinity, with derivatives bounded by~$V$ (up to order~5 in the present case), and a lower bound on the Hessian. In this setting, error estimates can be stated for initial conditions $q_0 \sim \nu$, where $\nu$ is such that $\EE_\nu(|V(q^k)|^p) \leq R < +\infty$ for some integer~$p$ sufficiently large. A convenient choice is $\nu = \mu$, in which case $q^k \sim \mu$, and the finiteness of the moments of~$V$ is guaranteed under mild conditions on~$V$. More generally, error estimates for generic initial conditions can be stated when the dynamics is geometrically ergodic. See also Remark~\ref{rmk:unbounded_spaces} for technical precisions on the extension of Lemma~\ref{lem:improved_rejection_rate}.
\end{remark}

\subsection{Numerical results}
\label{sec:num_strong}

We illustrate the error bounds predicted by Theorems~\ref{thm:old_error_strong} and~\ref{thm:improved_strong}, and also check the scaling of the rejection rate obtained in Lemma~\ref{lem:improved_rejection_rate}. We consider a simple one-dimensional system with the potential energy function $V(q) = q^4$ already used as a test case in~\cite{BV09}. This example is particularly relevant since it can be shown that in absence of Metropolization the associated Euler-Maruyama scheme is transient~\cite{MSH02}.

We compute a reference trajectory over a given time interval~$[0,T]$ with a very small timestep $\dt_{\rm ref}$. We next compute trajectories for larger timesteps $\dt_k = k\dt_{\rm ref}$, using Brownian increments consistent with the ones used to generate the trajectory with the smallest timestep. More precisely, denoting by $(G_{\rm ref}^n)_{n=0,\dots,T/\dt_{\rm ref}-1}$ the Gaussian random variables used for the reference trajectory, the trajectories $(q_k^m)_{m \geq 0}$ with coarser timesteps $\dt_k = k\dt_{\rm ref}$ are computed with the following iterative rule for $0 \leq m \leq T/\dt_k$ (assuming that $T/\dt_k$ is an integer):
\[
q_k^{m+1} = \Psi_{\dt_k}(q^m_k,G^m_k,U_k^m),
\]
with the Gaussian random variables
\[
G_k^m = \frac{1}{\sqrt{k}}\sum_{n=mk}^{(m+1)k-1} G_{\rm ref}^n,
\]
but where the uniform random variables $(U_k^m)_{k,m}$ are i.i.d. and independent of the variables $(U^n_{\rm ref})_{n \geq 0}$ used to generated the reference trajectory. We denote by $d_k$ the maximal error between the reference trajectory and the trajectory generated with the timestep $\dt_k$, considered at the same times $m\dt_k = mk\dt_{\rm ref}$:
\[
d_k = \max_{m=1,\dots,T/\dt_k} \left| q_k^m - q_{\rm ref}^{mk} \right|.
\]
We next perform $I$ independent realizations of this procedure, henceforth generating trajectories $(q_k^{i,m})_{m \geq 0}$ and $(q_{\rm ref}^{i,m})_{m \geq 0}$. Denoting by $d_k^i$ the so-obtained errors, the strong error for the timestep $\dt_k$ is estimated by the empirical average
\[
\widehat{E}_{k,I} = \sqrt{\frac1I \sum_{i=1}^I \left(d_k^i\right)^2}
\]
In fact, confidence intervals on this error can be obtained thanks to a Central Limit theorem, which shows that the following approximate equality holds in law:
\[
\widehat{E}_{k,I} \simeq \sqrt{\mathbb{E}(d_k^2)}\left(1 + \frac{\sigma_k}{2\mathbb{E}(d_k^2)\sqrt{I}} \, \mathcal{G}\right), \qquad \sigma_k^2 = \mathbb{E}(d_k^4) - \left( \mathbb{E}(d_k^2) \right)^2, 
\]
where $\mathcal{G}$ is a standard Gaussian random variable. Estimates of the average rejection rates are obtained with the empirical average of $1-A_\dt(q^n,\widetilde{q}^{n+1})$ computed along the generated trajectories.

Figure~\ref{fig:strong_error} presents the estimates $\widehat{E}_{k,I}$ of the strong error and the scaling of the rejection rate as a function of the time step $\dt_k$.  The reference timestep is $\dt_{\rm ref} = 10^{-6}$, and we chose $k = 100 \times 2^l$ where $l = 0,\dots,L$ with $L=12$, as well as an integration time $T = 0.8192$ (which corresponds to $2\dt_{L}$). We fixed $\beta =1$ and averaged over $I = 10^6$ realizations for the standard proposal~\eqref{eq:EM} and $I=5 \times 10^5$ realizations for the modified proposal~\eqref{eq:modified_proposal_strong_order}. The first initial condition is $q_{\rm ref}^{0,0} = 0$, while the subsequent initial conditions are the end point of the previous trajectory: $q_{\rm ref}^{i+1,0} = q_{\rm ref}^{i,T/\dt_{\rm ref}}$. The maximal value of $\sigma_d/(2\mathbb{E}(d^2))$ was estimated to be less than~32 for all reported simulations, so that the maximal relative statistical error is lower than~0.05 in all cases, and often much smaller.

As predicted by Theorems~\ref{thm:old_error_strong} and~\ref{thm:improved_strong}, we find that the strong error decreases as $\dt^{3/4}$ for MALA (with a rejection rate scaling as $\dt^{3/2}$, as predicted by~\cite[Lemma~4.7]{BV09}), while it scales as $\dt$ for the modified dynamics~\eqref{eq:def_Psimod}; see Figure~\ref{fig:strong_error} (Left). A key element to the improvement in the strong error is the fastest decrease of the rejection rate, indeed confirmed to be of order~$\dt^{5/2}$; see Figure~\ref{fig:strong_error} (Right).

\begin{figure}
\begin{center}
\includegraphics[width=8.2cm]{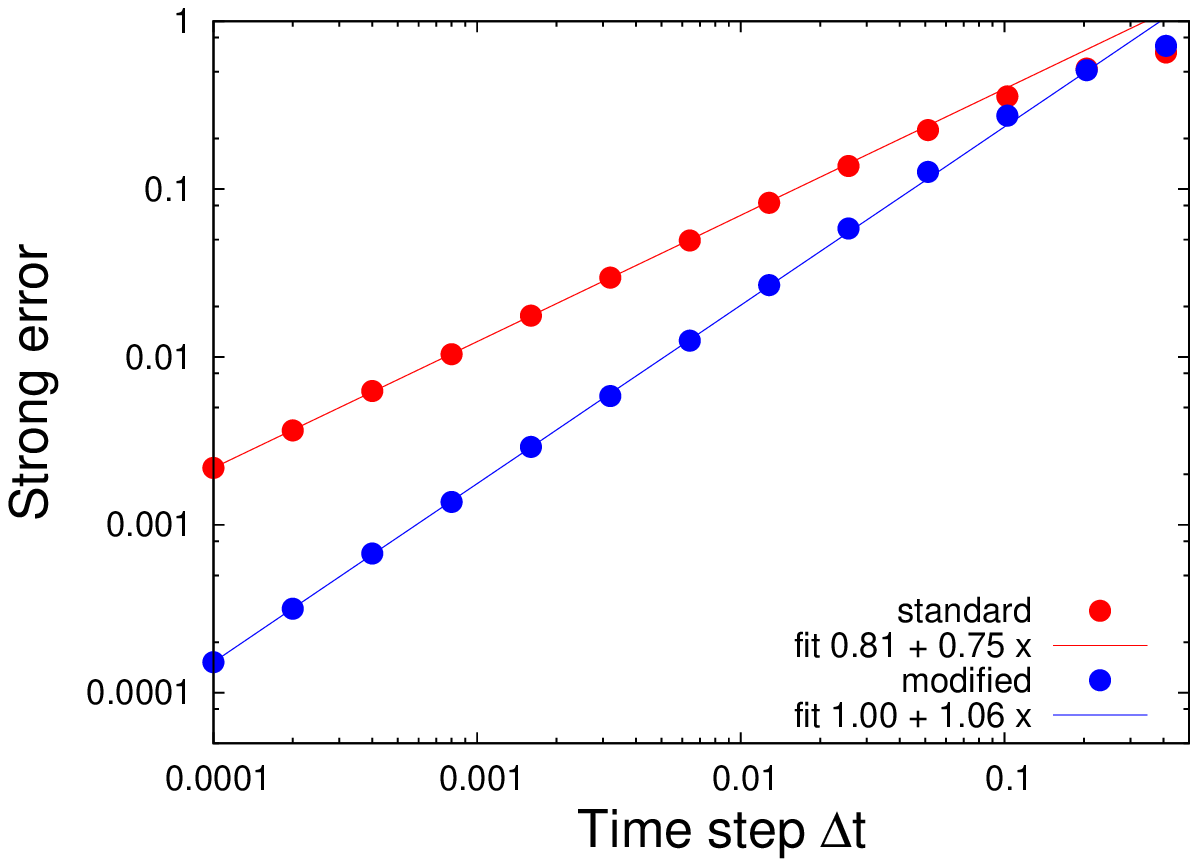}
\includegraphics[width=8.2cm]{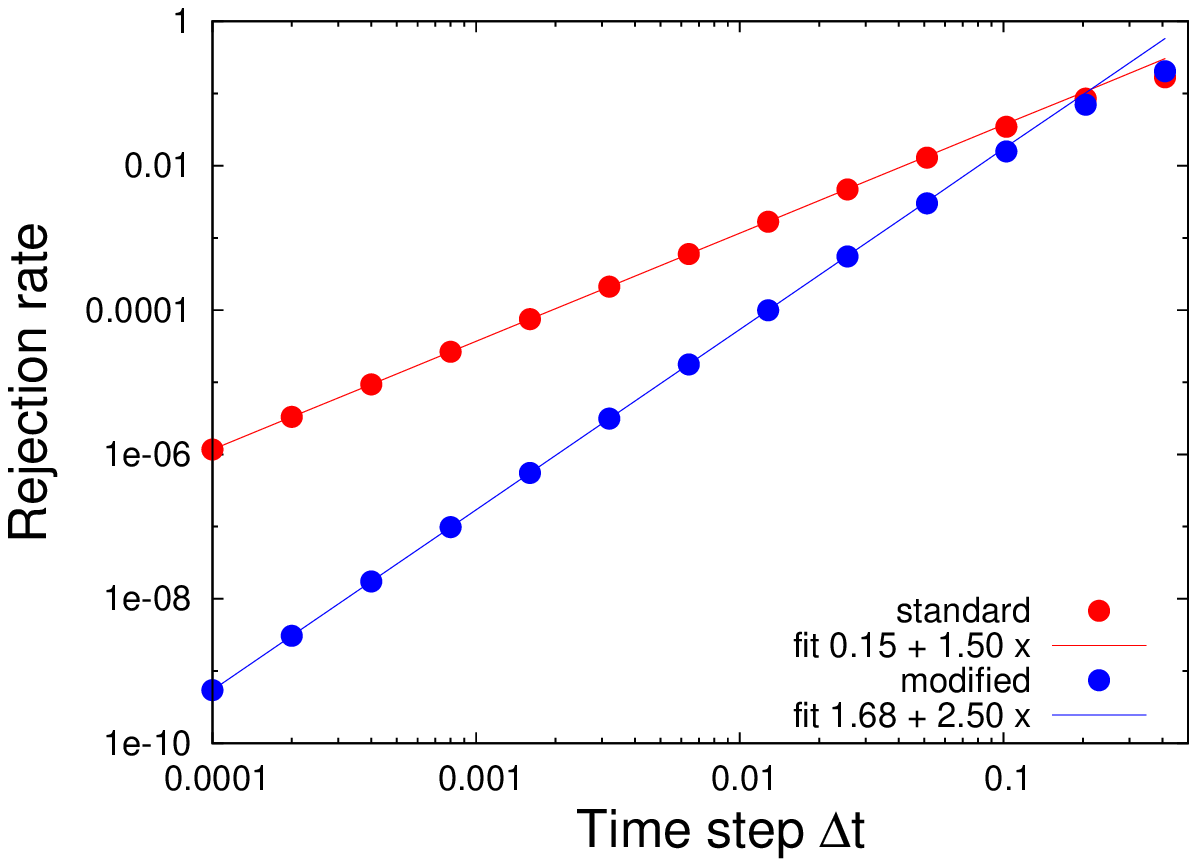}
\end{center}
\caption{\label{fig:strong_error} Left: Strong error over the trajectory as a function of the timestep. Right: Average rejection rate as a function of the timestep. The results obtained with MALA correspond to 'standard' while the ones obtained with the scheme~\eqref{eq:def_Psimod} correspond to 'modified'. For both plots, we superimpose a linear fit in log-log scale, with $x = \log(\dt)$.}
\end{figure}

\section{Reducing the bias in the computation of transport coefficients}
\label{sec:transport}

Our aim in this section is to modify the standard MALA algorithm in order to obtain better approximations of transport coefficients such as the self-diffusion. There are two complementary ways to do so: consider better proposal functions, and replace the Metropolis-Hastings rule with the Barker rule~\cite{Barker65}. 

We start by reviewing in Section~\ref{sec:def_mob_diff} the definitions of the self-diffusion for the continuous dynamics~\eqref{eq:dynamics}, and recall the known error estimates of the approximations of these quantities based on MALA in Section~\ref{sec:error_diff_MALA}. We next propose in Section~\ref{sec:schemes_modified_weak} modified schemes to compute more accurately transport coefficients, and state the related error estimates in Section~\ref{sec:error_modified_weak}. As we show in Section~\ref{sec:multiplicative_diff}, the use of a Barker rule also allows to reduce errors for dynamics with multiplicative noise. Finally, we illustrate the theoretical results of this section by numerical experiments in Section~\ref{sec:numerics_diffusion}.

\subsection{Definition of the self-diffusion}
\label{sec:def_mob_diff}

We briefly recall the setting already presented in~\cite[Section~1.2]{FHS14}. Since the positions~$q_t$ are restricted to the periodic domain~$\cM$, they are uniformly bounded in time. To obtain a diffusive behavior from the evolution of~$q_t$, we consider the following additive functional defined on the whole space~$\R^{d}$: starting from $Q_0 = q_0$,
\begin{equation}
  \label{eq:def_Q_t}
  Q_t = Q_0 - \beta \int_0^t \nabla V(q_s) \, ds + \sqrt{2} \, W_t.
\end{equation}
The difference with $q_t$ is that $Q_t$ is not reprojected in~$\cM$ by the periodization procedure (By this, we mean that we do not choose among all the images of $Q_t$ by translations of the lattice $\mathbb{Z}^d$ the one for which all components are in the interval~$[0,1)$). The diffusion tensor is then given by the following limit (which exists in our context thanks to the existence of a spectral gap for the generator~$\cL$, see for instance~\cite[Theorem~1]{FHS14}): 
\begin{equation}
\label{eq:def_diffusion_coeff}
\mathscr{D} = \lim_{t \to +\infty} \mathbb{E}\left(\frac{Q_t-Q_0}{\sqrt{t}} \otimes \frac{Q_t-Q_0}{\sqrt{t}}\right),
\end{equation}
where the expectation is over all realizations of the continuous dynamics~\eqref{eq:dynamics}, starting from initial conditions distributed according to the Boltzmann-Gibbs measure~$\mu$ defined in~\eqref{eq:Gibbs}. In fact, as made precise in~\cite[Section~1.2]{FHS14}, the self-diffusion constant $\mathcal{D}$, defined as the average mean-square displacement of the individual particles, has two equivalent expressions:
\begin{align}
\mathcal{D} = \frac{1}{2d}\mathrm{Tr}(\mathscr{D})& = \lim_{t \to +\infty} \mathbb{E}\left(\frac{1}{2dt} \sum_{i=1}^{d} (Q_{i,t} - Q_{i,0})^2 \right)  \label{eq:Einstein_continuous} \\
& = 1 - \frac{\beta^2}{d} \int_0^{+\infty} \mathbb{E}\Big[ \nabla V(q_t)^T \nabla V(q_0)\Big] dt. \label{eq:GK_continuous}
\end{align}
The expression~\eqref{eq:Einstein_continuous} is called the \emph{Einstein formula}. The second expression~\eqref{eq:GK_continuous} involves an integrated autocorrelation function. In accordance with the standard physics and chemistry nomenclature, we call~\eqref{eq:GK_continuous} the \emph{Green-Kubo formula} for the self-diffusion in the sequel.

\subsection{Error estimates for MALA}
\label{sec:error_diff_MALA}

For the MALA scheme described in Section~\ref{sec:MALA}, the self-diffusion can be estimated by either discretizing~\eqref{eq:Einstein_continuous} or~\eqref{eq:GK_continuous}. In both cases, the error is of order~$\dt$. For the Green-Kubo approach, it is proved in~\cite[Section~2.1]{FHS14} (by an adaptation of the results obtained in~\cite{LMS13}) that the integrated autocorrelation of smooth observables with average~0 with respect to~$\mu$ can be estimated up to error terms of order~$\dt$. More precisely, introducing
\[
\widetilde{C}^\infty(\mathcal{M}) = \left\{ \psi \in C^\infty(\mathcal{M}) \, \left| \int_\mathcal{M} \psi \, d\mu = 0 \right. \right\}, 
\] 
it is stated in~\cite[Theorem~2]{FHS14} that, for any $\psi,\varphi \in \widetilde{C}^{\infty}(\cM)$, there exists $\dt^* > 0$ and $R > 0$ such that, for any $0 < \dt \leq \dt^*$,
\begin{equation}
\label{eq:GK_general_MALA}
\int_0^{+\infty} \mathbb{E}\Big[\psi(q_t)\,\varphi(q_0)\Big]  dt = \dt \sum_{n=0}^{+\infty} \mathbb{E}_\dt \left[ \psi(q^n)\, \varphi(q^0) \right] + \dt \, r_{\psi,\varphi,\dt},
\end{equation}
with $|r_{\psi,\varphi,\dt}| \leq R$, and where the expectation on the left hand side of the above equation is with respect to initial conditions $q_0 \sim \mu$ and over all realizations of the dynamics~\eqref{eq:dynamics}, while the expectation on the right hand side is with respect to initial conditions $q^0 \sim \mu$ and over all realizations of the MALA scheme. As a corollary, 
\begin{equation}
  \label{eq:error_bounds_GK_MALA}
  \mathcal{D}^{\rm GK}_\dt = \mathcal{D} + \dt\, \widetilde{\mathcal{D}}^{\rm GK}_\dt,
\end{equation}
where $\widetilde{\mathcal{D}}^{\rm GK}_\dt$ is uniformly bounded for~$\dt$ sufficiently small, and where the numerically computed self-diffusion reads
\begin{equation}
\label{eq:approx_GK_MALA}
\mathcal{D}^{\rm GK}_\dt = 1 - \frac{\beta^2 \dt}{d} \sum_{n=0}^{+\infty} \mathbb{E}_\dt \left[ \nabla V(q^n)^T \nabla V(q^0) \right].
\end{equation}

For the approach based on Einstein's formula~\eqref{eq:Einstein_continuous}, we introduce, in accordance with the definition~\eqref{eq:def_Q_t}, a discrete additive functional allowing to keep track of the diffuse behavior of the Markov chain: Starting from $Q^0 = q^0$,
\begin{equation}
\label{eq:def_Qn}
Q^n = \sum_{k=0}^{n-1} \delta_\dt\left(q^k,G^k,U^k\right),
\end{equation}
with
\begin{equation}
\label{eq:def_delta}
\delta_\dt\left(q^k,G^k,U^k\right) = \ind_{U^k \leq A_\dt\left(q^k,\Phi_\dt\left(q^{k},G^k\right)\right)}\left(\Phi_\dt\left(q^{k},G^k\right) - q^n\right).
\end{equation}
While the Markov chain $(q^n)_{n \geq 0}$ remains in~$\cM$, the additive functional~$(Q^n)_{n \geq 0}$ has values in~$\R^{d}$. The diffusion tensor actually computed by the numerical scheme is
\begin{equation}
\label{eq:def_D_Einstein_dt}
\mathscr{D}_\dt = \lim_{n \to +\infty} \mathbb{E}_\dt\left[\frac{Q^n - Q^0}{\sqrt{n\dt}} \otimes \frac{Q^n - Q^0}{\sqrt{n\dt}} \right], 
\end{equation}
where the expectation on the right hand side is with respect to initial conditions $Q^0 = q^0 \sim \mu$ and for all realizations of the numerical scheme. It is shown in~\cite[Theorem~3]{FHS14} that there exists $\dt^* > 0$ such that, for any $0 < \dt \leq \dt^*$, 
\[
\mathscr{D}_\dt = \mathscr{D} + \dt \, \widetilde{\mathscr{D}}_\dt,
\]
where the coefficients of the symmetric matrix $\widetilde{\mathscr{D}}_\dt \in \R^{dN \times dN}$ are uniformly bounded for $\dt \leq \dt^*$. As an immediate corollary,
\begin{equation}
\label{eq:error_bounds_Einstein}
\mathcal{D}^{\rm Einstein}_\dt = \mathcal{D} + \dt \, \widetilde{\mathcal{D}}^{\rm Einstein}_\dt, 
\qquad
\mathcal{D}^{\rm Einstein}_\dt = \frac{1}{2dN} \mathrm{Tr}\left(\mathscr{D}_\dt\right),
\end{equation}
where $\widetilde{\mathcal{D}}^{\rm Einstein}_\dt$ is uniformly bounded for $\dt$ sufficiently small. 

\subsection{Presentation of the modified schemes}
\label{sec:schemes_modified_weak}

The modified scheme presented in Section~\ref{eq:strong_modified} has a very low rejection rate, but it can be checked that the choice~\eqref{choices_mod_mala} leads to a numerical scheme whose weak order is still~1, as for the standard Euler-Maruyama scheme and MALA. Therefore, the error in the computation of the transport coefficients for the scheme of Section~\ref{eq:strong_modified} is still of order~$\dt$. It is not sufficient to aim for lower rejection rates to improve transport properties.

We present in this section a way to decrease the errors in the transport coefficients, even if the rejection rate is still of order~$\dt^{3/2}$ -- and maybe even more surprisingly if the rejection rate is close to~1/2! The first idea is to introduce two proposals $\widetilde{q}^{n+1} = \Phi_\dt(q^n,G^n)$ which are alternatives to the standard Euler-Maruyama proposal~\eqref{eq:EM}: an implicit midpoint
\begin{equation}
  \label{eq:proposal_weak_2nd_order}
  \widetilde{q}^{n+1} = q^n - \beta \dt \, \nabla V\left(\frac{\widetilde{q}^{n+1}+q^n}{2}\right) + \sqrt{2\dt} \, G^n,
\end{equation}
and 
\begin{equation}
  \label{eq:proposal_HMC}
  \widetilde{q}^{n+1} = q^n - \beta \dt\,\nabla V\left(q^n + \frac{\sqrt{2\dt}}{2}\,G^n\right) + \sqrt{2\dt} \, G^n.
\end{equation}
The latter proposal has already been used in~\cite{BDV13}. Since the first proposal uses an implicit scheme, the timestep has to be sufficiently small for $\Phi_\dt$ to be well defined (see Lemma~\ref{lem:geometric_ergodicity} below). 
For the second proposal, it seems that computing the probability of reverse moves from $\widetilde{q}^{n+1}$ to~$q^n$ is going to be difficult in view of the explicit nonlinear dependence on the Gaussian increment~$G^n$. It turns out however that~\eqref{eq:proposal_HMC} can be reformulated as one step of a reversible, symplectic scheme starting from a random momentum $p^n = \beta^{-1/2} G^n$ and with a timestep $h = \sqrt{2\beta\dt}$:
\begin{equation}
\label{eq:Verlet_reformulation}
\left\{ 
\begin{aligned}
q^{n+1/2} & = q^n + \frac{h}{2} \, p^n, \\
p^{n+1}  & = p^n - h \, \nabla V\left(q^{n+1/2}\right), \\
\widetilde{q}^{n+1}  & = q^{n+1/2} + \frac{h}{2} \, p^{n+1}. \\
\end{aligned} 
\right.
\end{equation}
As made precise below in~\eqref{eq:def_alpha_Barker_HMC}, this allows to compute ratios of transition probabilities relying on Hybrid Monte Carlo algorithms~\cite{DKPR87}, see for instance the presentation in~\cite[Section~2.1.3]{LRS10}. Note however that this scheme is different from the one usually used in Hybrid Monte Carlo since it corresponds to a position Verlet method rather than a velocity Verlet method, which would start by integrating the momenta (and which would reduce to a standard Euler-Maruyama discretization with the choice $p^n = \beta^{-1/2} G^n$ and with a timestep $h = \sqrt{2\beta\dt}$). 

For practical purposes, \eqref{eq:proposal_HMC} should be preferred over \eqref{eq:proposal_weak_2nd_order} since there is no extra computational cost contrarily to the implicit proposal~\eqref{eq:proposal_weak_2nd_order}, which has to be solved using fixed point iterations or a Newton method. 

\begin{remark}[Higher order HMC schemes]
  \label{rmk:HMC}
  It is possible in principle to integrate the Hamiltonian dynamics using a reversible scheme of arbitrary accuracy, relying on appropriate composition methods (see~\cite[Sections~II.4 and~V.3]{HairerLubichWanner06}). As for~\eqref{eq:proposal_HMC}, discretizations of the overdamped Langevin dynamics~\eqref{eq:dynamics} are then obtained by starting from a random momentum $p^n = \beta^{-1/2} G^n$ and using a timestep $h = \sqrt{2\beta\dt}$. Since the energy difference in the underlying Hamiltonian scheme can be made arbitrarily small by increasing the order of the composition method, this seemingly leads to proposals with an arbitrarily low rejection rate. In practice however, the efficiency of these schemes is limited by the numerical instabilities related to round off errors. We therefore focus on simple composition methods of order~2 such as~\eqref{eq:Verlet_reformulation}.  
\end{remark}

The second idea to improve the computation of transport coefficients is to consider acceptance/rejection rules different from the Metropolis procedure. In fact, we consider the Barker rule~\cite{Barker65} (rather than other possible rules, see Remark~\ref{rmk:other_rules}). More precisely, the acceptance rates for the Metropolis and Barker rules respectively read
\begin{equation}
  \label{eq:acceptance_rate}
  A^{\rm MH}_\dt(q^n,\widetilde{q}^{n+1}) = \min\left(1, \rme^{-\alpha_\dt(q^n,\widetilde{q}^{n+1})}\right),
  \qquad
  A^{\rm Barker}_\dt(q^n,\widetilde{q}^{n+1}) = \frac{\rme^{-\alpha_\dt(q^n,\widetilde{q}^{n+1})}}{1 + \rme^{-\alpha_\dt(q^n,\widetilde{q}^{n+1})}},
\end{equation}
with, for the midpoint proposal~\eqref{eq:proposal_weak_2nd_order},
\begin{equation}
\begin{aligned}
  \label{eq:def_alpha_Barker}
  \alpha_\dt(q,q') & = \beta \Big( V(q') - V(q) \Big) + \frac{1}{4\dt} \left(\left[q-q'+\beta \dt \, \nabla V\left(\frac{q+q'}{2}\right)\right]^2 - \left[q'-q+\beta \dt \, \nabla V\left(\frac{q+q'}{2}\right)\right]^2\right) \\
  & = \beta \left( V(q') - V(q) - \nabla V\left(\frac{q+q'}{2}\right) \cdot (q'-q) \right),
  \end{aligned}
\end{equation}
while, for~\eqref{eq:proposal_HMC}, we define the ratio when $q' = \Phi_\dt(q,G)$ (since we shall only need it in that case):
\begin{equation}
  \label{eq:def_alpha_Barker_HMC}
  \alpha_\dt\Big(q,\Phi_\dt(q,G)\Big) = \beta \Big( V(\Phi_\dt(q,G)) - V(q) \Big) + \frac{1}{2} \left[ \left(G-\sqrt{2\dt} \, \beta \nabla V\left(q+\frac{\sqrt{2\dt}}{2}\,G\right)\right)^2- G^2 \right].
\end{equation}
With the notation used in~\eqref{eq:Verlet_reformulation}, the right-hand side of the above equation can be rewritten as $\beta \left[ H(\widetilde{q}^{n+1},p^{n+1})-H(q^n,p^n) \right]$, where $H(q,p) = V(q) + p^2/2$ is the Hamiltonian of the system, and $p^n = \beta^{-1/2} G^n$. The new configuration is in all cases
\[
q^{n+1} = \Psi_\dt(q^n,G^n,U^n) = q^n + \ind_{U^n \leq A_\dt\left(q^n,\Phi_\dt\left(q^n,G^n\right)\right)} \Big(\Phi_\dt\left(q^n,G^n\right)-q^n\Big),
\]
where $A_\dt$ is one of the rates defined in~\eqref{eq:acceptance_rate}. Note that, for the Barker rule, the average rejection rate is close to~$1/2$ in the limit $\dt\to 0$ (as made precise in~\eqref{eq:scaling_rejection_rate_Barker}). Let us also emphasize that the use of a Barker rule leads to a Markov chain which is reversible with respect to~$\mu$ (see for instance~\cite[Section~2.1.2.2]{LRS10}).

Let us now discuss the ergodic properties of the schemes based on the proposals~\eqref{eq:proposal_weak_2nd_order} and~\eqref{eq:proposal_HMC}. We introduce the evolution operator
\[
P_\dt \psi(q) = \mathbb{E}\Big( \psi\left(q^{n+1}\right) \, \Big| \, q^n = q \Big),
\]
as well as the discrete generator
\begin{equation}
\label{eq:def_discrete_generator}
\begin{aligned}
\frac{P_\dt-\Id}{\dt} \psi(q) & = \frac{1}{\dt} \left[ \mathbb{E}\Big( \psi\left(q^{n+1}\right) \, \Big| \, q^n = q \Big) - \psi(q^n) \right] \\
& = \int_{\R^d} A_\dt(q,\Phi_\dt(q,g)) \frac{\psi(\Phi_\dt(q,g)) - \psi(q)}{\dt} \, \frac{\rme^{-g^2/2}}{(2\pi)^{d/2}} \, dg,
\end{aligned}
\end{equation}
where the expectation in the first line is over $G^n,U^n$ and $A_\dt$ is one of the rates defined in~\eqref{eq:acceptance_rate}. We also define the space of bounded functions with average~0 with respect to~$\mu$:
\[
\widetilde{L}^\infty(\cM) = \left\{ \varphi \in L^\infty(\cM) \, \left| \int_\cM f \, d\mu = 0 \right. \right\}.
\]
The operator norm on this space is 
\[
\| T \|_{\mathcal{B}(\widetilde{L}^{\infty})} = \sup_{\substack{f \in \widetilde{L}^\infty(\cM) \\ f \neq 0}} \frac{\| Tf\|_{L^\infty(\cM)}}{\| f\|_{L^\infty(\cM)}}.
\]
We can then state the following lemma (see Section~\ref{sec:proof:lem:geometric_ergodicity} for the proof).

\begin{lemma}
  \label{lem:geometric_ergodicity}
  There exists $\dt^* > 0$ such that, for any $0 < \dt \leq \dt^*$, the proposal~\eqref{eq:proposal_weak_2nd_order} is well defined whatever $q^n \in \cM$ and $G^n \in \R^d$. In addition, there exist $C,\lambda>0$ such that, for all $n \in \mathbb{N}$ and $0 < \dt \leq \dt^*$, and any $f \in \widetilde{L}^{\infty}(\cM)$, the evolutions generated by either the proposal~\eqref{eq:proposal_weak_2nd_order} or~\eqref{eq:proposal_HMC}, together with a Metropolis-Hastings rule or a Barker rule, satisfy
  \begin{equation}
    \label{eq:geom_ergod}
    \left \| P_\dt^n f \right \|_{L^\infty} \leq C \, \rme^{-\lambda n \dt} \|f\|_{L^\infty}.
  \end{equation}
  As a consequence, there exists $K > 0$ such that, uniformly in $\dt \in (0,\dt^*]$,
  \begin{equation}
    \label{eq:bound_discrete_generator}
    \left\| \left(\frac{\Id - P_\dt}{\dt}\right)^{-1} \right\|_{\mathcal{B}(\widetilde{L}^{\infty})} \leq K.
  \end{equation}
\end{lemma}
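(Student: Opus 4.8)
The statement has two parts: well-definedness of the implicit midpoint proposal~\eqref{eq:proposal_weak_2nd_order}, and the geometric ergodicity bound~\eqref{eq:geom_ergod}, from which~\eqref{eq:bound_discrete_generator} follows by a Neumann-type series argument. Let me sketch how I would attack each.

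For the well-definedness of~\eqref{eq:proposal_weak_2nd_order}, the plan is to view the proposal as a fixed-point equation $\widetilde q = \Theta(\widetilde q)$ with $\Theta(x) = q^n - \beta \dt \, \nabla V((x+q^n)/2) + \sqrt{2\dt}\, G^n$. Since $V$ is smooth on the compact space~$\cM$, the Hessian $\nabla^2 V$ is bounded, so $\Theta$ has Lipschitz constant $\tfrac{\beta \dt}{2}\|\nabla^2 V\|_{L^\infty}$, which is strictly less than~$1$ once $\dt < \dt^* := 2/(\beta \|\nabla^2 V\|_{L^\infty})$. The Banach fixed-point theorem then yields a unique solution $\widetilde q^{n+1}$ for every $q^n \in \cM$ and every $G^n \in \R^d$, giving a well-defined proposal function $\Phi_\dt$. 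One should also note that $\Phi_\dt$ inherits smoothness in $(q,G)$ by the implicit function theorem, which will be needed implicitly to make sense of the transition operator.

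The core of the lemma is the uniform-in-$\dt$ exponential decay~\eqref{eq:geom_ergod}. The natural strategy is a Doeblin-type minorization argument: I would show that there exist $\dt^* > 0$, an integer $m$ (or a single step), and a constant $\kappa \in (0,1)$ such that the $m$-step (or one-step) transition kernel satisfies a minorization $P_\dt^m(q,\cdot) \geq (1-\kappa)\, \nu(\cdot)$ for some fixed probability measure $\nu$, uniformly over $q \in \cM$ and $\dt \in (0,\dt^*]$. The key observation is that the accept/reject step keeps the walker fixed only with probability bounded away from~$1$ (for both Metropolis and Barker rules, the acceptance probability is bounded below on the bulk of Gaussian increments of size $O(\sqrt\dt)$), while the accepted Gaussian move spreads mass over a ball of radius $O(\sqrt\dt)$ with a density bounded below; chaining $O(1/\dt)$ such steps lets the walker reach anywhere on the compact, connected~$\cM$, producing a minorization on a physical time interval of order~$1$. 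This yields a contraction $\|P_\dt^{m}\|_{\mathcal B(\widetilde L^\infty)} \leq \kappa$ over a number of steps $m \sim t^*/\dt$, and iterating gives~\eqref{eq:geom_ergod} with $\lambda$ and $C$ independent of~$\dt$. The subtle point, and what I expect to be the main obstacle, is establishing the minorization uniformly in~$\dt$ as $\dt \to 0$: one must count $O(1/\dt)$ small accepted steps and control the cumulative lower bound on the density so that it does not degenerate — this is precisely where compactness of~$\cM$ and the boundedness of $\nabla V$, $\nabla^2 V$ are essential, and where the analysis must treat the Metropolis and Barker acceptance rates uniformly, using that both are bounded below along typical increments since the energy difference along a move of size $O(\sqrt\dt)$ is itself $O(\sqrt\dt)$.

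Finally, the bound~\eqref{eq:bound_discrete_generator} on the inverse of the discrete generator follows from~\eqref{eq:geom_ergod} by summation. On $\widetilde L^\infty(\cM)$ the operator $(\Id - P_\dt)/\dt$ is formally inverted by the resolvent-type series $\dt \sum_{n \geq 0} P_\dt^n$, and for $f \in \widetilde L^\infty(\cM)$ one estimates
\[
\left\| \dt \sum_{n=0}^{\infty} P_\dt^n f \right\|_{L^\infty} \leq \dt \sum_{n=0}^{\infty} C \rme^{-\lambda n \dt} \|f\|_{L^\infty} = \frac{C\dt}{1 - \rme^{-\lambda \dt}} \|f\|_{L^\infty} \leq K \|f\|_{L^\infty},
\]
where the last inequality uses $1 - \rme^{-\lambda \dt} \geq \lambda \dt - (\lambda\dt)^2/2 \geq c\,\lambda\dt$ for $\dt$ small, so that $C\dt/(1-\rme^{-\lambda\dt})$ is bounded uniformly in $\dt \in (0,\dt^*]$. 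This identifies $\dt\sum_{n\geq 0}P_\dt^n$ as the bounded inverse of $(\Id - P_\dt)/\dt$ and gives the constant $K$, completing the proof.
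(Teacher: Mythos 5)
Your overall architecture matches the paper's: a Banach fixed-point argument for the well-posedness of the implicit midpoint proposal (with the same Lipschitz threshold $\dt < 2/(\beta\|\nabla^2 V\|_{L^\infty})$), a uniform-in-$\dt$ Doeblin minorization over a fixed physical time $T$ (i.e.\ over $\lceil T/\dt\rceil$ steps) to obtain~\eqref{eq:geom_ergod} via Harris-type results, and the Neumann series $\dt\sum_{n\geq 0}P_\dt^n$ for~\eqref{eq:bound_discrete_generator}. The first and third parts are complete and essentially identical to the paper's.

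However, the central step --- the minorization uniform in $\dt$ --- is where you correctly locate ``the main obstacle'' but do not resolve it, and the mechanism you sketch would in fact degenerate. Chaining per-step density lower bounds over $O(1/\dt)$ steps produces a product of $O(1/\dt)$ constants and hence a vanishing minorization constant; likewise, lower-bounding the acceptance probability at each step and requiring acceptance throughout fails for the Barker rule, where the per-step acceptance probability is only $\approx 1/2$, so the probability that all $\lceil T/\dt\rceil$ proposals are accepted is of order $2^{-T/\dt}\to 0$. The paper avoids both pitfalls by never chaining local bounds: it writes the $n$-step position exactly as $q^n = q^0 + \mathscr{F}^n + \mathscr{G}^n$, where $\mathscr{F}^n$ is a drift bounded by $\|\nabla V\|_{L^\infty}(T+\dt)$ and $\mathscr{G}^n$ is the sum of the \emph{accepted} Gaussian increments. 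It then decomposes $\mathscr{G}^n = \widetilde{\mathscr{G}}^n + \widehat{\mathscr{G}}^n$, where $\widetilde{\mathscr{G}}^n$ replaces the acceptance indicators by independent Bernoulli($a$) coins ($a=1$ for Metropolis, $a=1/2$ for Barker), and controls the coupling error $\widehat{\mathscr{G}}^n$ by a drift-plus-martingale decomposition and Chebyshev, using that the acceptance rate equals $a + \mathrm{O}(\dt^{3/2})$. Conditionally on the number $\mathcal{N}_n$ of accepted coins, $\widetilde{\mathscr{G}}^n$ is an exact Gaussian with covariance $2\mathcal{N}_n\dt\,\Id$, and binomial concentration gives $\mathcal{N}_n\geq n/4$ with probability $1-4/n$, so the covariance is of order $T$ and the Gaussian density is bounded below uniformly --- this single exact Gaussian computation is what delivers a non-degenerate minorization constant. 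Without this (or an equivalent) decoupling of the acceptance events from the Gaussian increments, and without the treatment of the $\approx 1/2$ rejection rate of the Barker rule, your argument does not close.
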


Such ergodicity results are already provided in~\cite{BH13,FHS14} for proposals based on a simple Euler-Maruyama discretization and a Metropolis-Hastings rule. The resolvent bound~\eqref{eq:bound_discrete_generator} is a crucial ingredient to appropriately estimate errors in transport coefficients.

\subsection{Error estimates on the self-diffusion for the modified schemes}
\label{sec:error_modified_weak}

We are now able to state error estimates concerning the numerical approximation of Green-Kubo formulas.

\begin{theorem}[Improved Green-Kubo formula]
  \label{thm:improved_GK}
  Set $a=1/2$ and $\alpha = 2$ when the Barker rule is used with either the midpoint proposal~\eqref{eq:proposal_weak_2nd_order} or the HMC proposal~\eqref{eq:proposal_HMC}, and $a=1$ and $\alpha = 3/2$ when the Metropolis-Hastings rule is used for these proposals. 
  Consider two observables $\psi,\varphi \in \widetilde{C}^{\infty}(\cM)$. Then, there exists $\dt^* > 0$ such that, for any $0 < \dt \leq \dt^*$,
  \[
  \int_0^{+\infty} \mathbb{E}\Big[\psi(q_t)\,\varphi(q_0)\Big]  dt = \dt \left( a\sum_{n=0}^{+\infty} \mathbb{E}_\dt \left[ \psi(q^n)\, \varphi(q^0) \right] - \frac12 \int_\cM \psi \varphi \, d\mu\right) + \dt^{\alpha} r_{\psi,\varphi,\dt},
  \]
  with $r_{\psi,\varphi,\dt}$ uniformly bounded (with respect to~$\dt$), and where the expectation on the left hand side of the above equation is with respect to initial conditions $q_0 \sim \mu$ and over all realizations of the dynamics~\eqref{eq:dynamics}, while the expectation on the right hand side is with respect to initial conditions $q^0 \sim \mu$ and over all realizations of the numerical scheme. 
\end{theorem}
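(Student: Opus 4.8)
The plan is to recast both correlation functions as resolvents restricted to mean-zero functions, and to reduce the whole statement to a weak expansion of the discrete generator $\cL_\dt := (P_\dt - \Id)/\dt$ introduced in~\eqref{eq:def_discrete_generator}. Since $\psi \in \widetilde C^\infty(\cM)$ has zero $\mu$-average and $\cL$ admits a spectral gap on the mean-zero subspace of $L^2(\mu)$ (the property underlying the very existence of the self-diffusion, see~\cite{FHS14}), writing $\mathbb{E}[\psi(q_t)\varphi(q_0)] = \int_\cM (\rme^{t\cL}\psi)\,\varphi\,d\mu$ and integrating in time yields
\[
\int_0^{+\infty}\mathbb{E}\big[\psi(q_t)\varphi(q_0)\big]\,dt = \int_\cM \big((-\cL)^{-1}\psi\big)\,\varphi\,d\mu.
\]
On the discrete side, the geometric ergodicity and the resolvent bound~\eqref{eq:bound_discrete_generator} of Lemma~\ref{lem:geometric_ergodicity} ensure that $\sum_{n \geq 0} P_\dt^n$ converges on $\widetilde L^\infty(\cM)$ and that
\[
\dt\sum_{n=0}^{+\infty}\mathbb{E}_\dt\big[\psi(q^n)\varphi(q^0)\big] = \int_\cM \big((-\cL_\dt)^{-1}\psi\big)\,\varphi\,d\mu, \qquad (-\cL_\dt)^{-1} = \dt\,(\Id - P_\dt)^{-1}.
\]
The theorem thus reduces to establishing, weakly against $\varphi$ with respect to $\mu$, the operator estimate $a\,(-\cL_\dt)^{-1} = (-\cL)^{-1} + \tfrac{\dt}{2}\,\Id + \dt^{\alpha}\,\mathcal R_\dt$ with $\mathcal R_\dt$ uniformly bounded for $\dt \leq \dt^*$.

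The heart of the proof is the weak expansion of the discrete generator, namely $\tfrac1a\,\cL_\dt = \cL + \tfrac{\dt}{2}\,\cL^2 + \dt^{1+\delta}\,\mathcal E_\dt$ with $\delta = \alpha - 1$ and $\mathcal E_\dt$ bounded from $\widetilde C^\infty(\cM)$ to $L^\infty(\cM)$ uniformly in $\dt$. Starting from~\eqref{eq:def_discrete_generator}, I would insert simultaneously the Taylor expansion of the acceptance rate~\eqref{eq:acceptance_rate} in the energy difference $\alpha_\dt$ — that is $A^{\rm MH}_\dt = 1 - (\alpha_\dt)_+ + O(\alpha_\dt^2)$ for Metropolis and $A^{\rm Barker}_\dt = \tfrac12 - \tfrac14\alpha_\dt + O(\alpha_\dt^3)$ for Barker — the expansion of the increment $\Phi_\dt(q,G) - q$ in powers of $\dt^{1/2}$, and that of $\alpha_\dt$ itself. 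The structural facts that make everything work are that, for both the midpoint proposal~\eqref{eq:proposal_weak_2nd_order} and the HMC proposal~\eqref{eq:proposal_HMC}, the quantity $\alpha_\dt$ in~\eqref{eq:def_alpha_Barker}--\eqref{eq:def_alpha_Barker_HMC} starts at order $\dt^{3/2}$ with a leading coefficient odd in $G$, and that odd Gaussian moments vanish. The leading acceptance probability — equal to $1$ for Metropolis and to $1/2$ for Barker — produces the prefactor $a$, after which one computes the $O(\dt)$ correction explicitly.

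Two points deserve care and constitute the main obstacle. The first is an algebraic identity: the $O(\dt)$ term gathers a contribution coming from the proposal together with a rejection contribution of the form $\mathbb{E}_G[\,\cdot\,]$, and one must verify that their sum is \emph{exactly} $\tfrac12\cL^2$; this is in fact forced, since $(-\cL)^{-1}\big(\tfrac12\cL^2\big)(-\cL)^{-1} = \tfrac12\Id$ on mean-zero functions is precisely what produces the $-\tfrac12\int_\cM\psi\varphi\,d\mu$ term in the statement. The second is the gain of half a power in $\dt$ for the Barker rule: because $A^{\rm Barker}_\dt - \tfrac12$ is a \emph{smooth odd} function of $\alpha_\dt$, it preserves the parity in $G$, so the next ($O(\dt^{3/2})$) contribution has odd $G$-parity and vanishes in Gaussian expectation, leaving a remainder $O(\dt^2)$ (so $\delta = 1$); by contrast the Metropolis factor $(\alpha_\dt)_+$ breaks this parity and leaves a genuine $O(\dt^{3/2})$ remainder (so $\delta = 1/2$). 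Controlling the Taylor remainders uniformly relies on the compactness of $\cM$ and the smoothness of $V$ (all derivatives bounded) together with Gaussian moment bounds.

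Finally, I would propagate the generator expansion to the resolvents. Writing $a(-\cL_\dt)^{-1} = \big(-\tfrac1a\cL_\dt\big)^{-1}$, the resolvent identity together with the uniform bound $\|(-\cL_\dt)^{-1}\|_{\mathcal B(\widetilde L^\infty)} \leq K$ from Lemma~\ref{lem:geometric_ergodicity} and the boundedness of $(-\cL)^{-1}$ give
\[
a(-\cL_\dt)^{-1} - (-\cL)^{-1} = \dt\,(-\cL)^{-1}\Big(\tfrac12\cL^2\Big)(-\cL)^{-1} + O(\dt^{1+\delta}) = \frac{\dt}{2}\,\Id + \dt^\alpha\,\mathcal R_\dt,
\]
where I used once more that $(-\cL)^{-1}\cL^2(-\cL)^{-1} = \Id$ on $\widetilde L^\infty(\cM)$. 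Testing against $\varphi$ with respect to $\mu$ and recalling the two resolvent identities from the first step yields the announced formula, with $r_{\psi,\varphi,\dt}$ uniformly bounded and the respective values of $a$ and $\alpha$ for the Metropolis and Barker rules.
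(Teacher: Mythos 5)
Your proposal is correct and follows essentially the same route as the paper: both arguments reduce the statement to the ``second order type'' weak expansion $\frac{P_\dt-\Id}{\dt}\psi = a\left(\cL\psi + \frac{\dt}{2}\cL^2\psi\right) + \dt^\alpha r_{\psi,\dt}$ (the paper's Lemma~\ref{lem:weak_type_expansion}, whose proof by Taylor expansion of $\alpha_\dt$ and Gaussian parity you sketch in the same way), and then transfer it to the Green--Kubo integral through resolvent identities controlled by the uniform bound~\eqref{eq:bound_discrete_generator} of Lemma~\ref{lem:geometric_ergodicity}. The only cosmetic difference is that the paper inserts the telescoping identity $\left(\dt\sum_{n\geq 0}P_\dt^n\right)\left(\frac{\Id-P_\dt}{\dt}\right)=\Id$ applied to $(-\cL)^{-1}\psi$ rather than expanding the resolvent difference directly, which is algebraically equivalent.
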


Note that, for the Barker rule, the quadrature formula corresponds to the standard way of computing Green-Kubo formulas (as on the right-hand side of~\eqref{eq:GK_general_MALA}) except that the first term $n=0$ is discarded and a global factor $1/2$ is required in order to correct for the fact that the average rejection rate of the Barker algorithm is close to $1/2$. 

\begin{remark}[Increase in the asymptotic variance for the Barker rule]
\label{rmk:variance}
The asymptotic variance of the time average of a function of interest~$f$ for a Markov chain $(q^n)_{n \geq 0}$ with invariant measure~$\mu$ is (assuming without loss of generality that the average of~$f$ with respect to~$\mu$ vanishes)
\[
\sigma^2(f) = \int_\cM f^2 \, d\mu + 2 \sum_{n=1}^{+\infty} \mathbb{E}\Big(f(q^n)f(q^0)\Big), 
\]
where the expectation is with respect to initial conditions $q^0 \sim \mu$ and over all realizations of the Markov chain; see for instance~\cite[Section~2.3.1.2]{LRS10} and~\cite{ActaNum} and references therein. In view of the estimates provided by Theorem~\ref{thm:improved_GK}, the asymptotic variance of time averages computed along trajectories of schemes based on the Metropolis rule are, for $\dt$ sufficiently small,
\[
\dt\,\sigma_{{\rm MH},\dt}^2(f) = \int_0^{+\infty} \mathbb{E}\Big[f(q_t)\,f(q_0)\Big]  dt + \mathrm{O}(\dt),
\]
while, for the Barker rule,
\[
\dt\,\sigma_{\Barker,\dt}^2(f) = 2\int_0^{+\infty} \mathbb{E}\Big[f(q_t)\,f(q_0)\Big]  dt + \mathrm{O}(\dt).
\]
This shows that $\dt \, \sigma_{\Barker,\dt}^2(f) = 2\dt\, \sigma_{{\rm MH},\dt}^2(f) + \mathrm{O}(\dt)$, which allows to quantify the increase in the variance as $\dt\to 0$ due to the increased rejection rate of the Barker rule.
\end{remark}

As a corollary of Theorem~\ref{thm:improved_GK}, we obtain error bounds on the computation of the self-diffusion as
\begin{equation}
  \label{eq:error_bounds_GK}
  \mathcal{D}^{\rm GK}_\dt = \mathcal{D} + \dt^{\alpha}\, \widetilde{\mathcal{D}}^{\rm GK}_\dt,
\end{equation}
where $\mathcal{D}$ is the diffusion coefficient of the continuous dynamics (see~\eqref{eq:GK_continuous}), $\widetilde{\mathcal{D}}^{\rm GK}_\dt$ is uniformly bounded for~$\dt$ sufficiently small, and where the numerically computed self-diffusion reads, for the Metropolis rule,
\[
\mathcal{D}^{\rm GK}_\dt = 1 - \frac{\beta^2 \dt}{d} \left(\frac12 \int_\cM |\nabla V|^2 \, d\mu  + \sum_{n=1}^{+\infty} \mathbb{E}_\dt \left[ \nabla V(q^n)^T \nabla V(q^0) \right] \right).
\]
while, for the Barker rule,
\begin{equation}
\label{eq:approx_GK_addtive}
\mathcal{D}^{\rm GK}_\dt = 1 - \frac{\beta^2 \dt}{2d} \sum_{n=1}^{+\infty} \mathbb{E}_\dt \left[ \nabla V(q^n)^T \nabla V(q^0) \right].
\end{equation}
Compared to the results of~\cite{FHS14} (recalled in~\eqref{eq:error_bounds_GK_MALA}), \eqref{eq:error_bounds_GK} allows to compute the diffusion coefficient up to error terms of order~$\dt^{2}$ rather than~$\dt$ provided the Barker rule is used. 

\begin{remark}
\label{rmk:compensation}
To numerically approximate formulas such as~\eqref{eq:approx_GK_addtive}, it is necessary to introduce some truncation on the maximal number of timesteps in the sum, and estimate the expectation by independent realizations. In fact, the truncation should match the physical time $T_{\rm corr}$ for which correlations are sufficiently small for the underlying continuous dynamics. It therefore scales as $T_{\rm corr}/\dt$. When the Barker rule is used, the rejection rate is close to 1/2, so that the correlation roughly decays twice slower than for Metropolis-based discretizations. The number of timesteps to be used to compute the integrated autocorrelation should then scale as $2T_{\rm corr}/\dt$. On the other hand, the timestep can be increased a lot since the bias is of order $\dt^2$ rather than $\dt$ (for standard MALA) or $\dt^{3/2}$ (for modified Metropolis-based schemes). Therefore, fixing an admissible error level $\varepsilon \ll 1$ for the bias, we see that the computational cost of one realization for the modified schemes with the Barker rule scales as $2T_{\rm corr}/\sqrt{\varepsilon}$, which is much smaller than the cost $T_{\rm corr}/\varepsilon$ for one realization with standard MALA and $T_{\rm corr}/\varepsilon^{2/3}$ for modified Metropolis-based schemes.
\end{remark}

\medskip

We next present error estimates on the numerical approximation of Einstein's formula. We still define $Q^n$ by~\eqref{eq:def_Qn}, but replace the proposal function $\Phi_\dt$ by the ones associated with the schemes presented in Section~\ref{sec:schemes_modified_weak}, and consider the appropriate acceptance rates~\eqref{eq:acceptance_rate}.

\begin{theorem}[Improved Einstein fluctuation formula]
  \label{thm:fluctuation}
  Set $a=1/2$ and $\alpha = 2$ when the Barker rule is used with either the midpoint or the HMC proposal, and $a=1$ and $\alpha = 3/2$ when the Metropolis-Hastings rule is used for these proposals. Consider the unperiodized displacement $Q^n$ defined~\eqref{eq:def_Qn}, with $Q^0 = q^0 \sim \mu$. Then,
  \[
  \mathscr{D}_\dt = \lim_{n \to +\infty} \mathbb{E}\left[ \frac{Q^n-Q^0}{\sqrt{n\dt}} \otimes \frac{Q^n-Q^0}{\sqrt{n\dt}} \right] = a \mathscr{D} + \dt^{\alpha}\, \widetilde{\mathscr{D}}_\dt,
  \]
  where the expectation on the left hand side of the above equation is with respect to initial conditions $q^0 \sim \mu$ and over all realizations of the numerical scheme, and where $\widetilde{\mathscr{D}}^{\rm Einstein}_\dt$ is a $d \times d$ matrix whose coefficients are uniformly bounded for $\dt$ sufficiently small.
\end{theorem}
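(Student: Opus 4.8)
The plan is to reduce the Einstein tensor $\mathscr{D}_\dt$ to a Green--Kubo-type sum of one-step increment autocorrelations, and then to feed the resulting discrete force autocorrelations into Theorem~\ref{thm:improved_GK}. Writing $\delta^k := \delta_\dt(q^k,G^k,U^k)$ for the one-step increment defined in~\eqref{eq:def_delta}, so that $Q^n-Q^0 = \sum_{k=0}^{n-1}\delta^k$, I would first exploit the stationarity of the chain started at $q^0\sim\mu$ (with $\mu$ invariant for each of the schemes, by the reversibility recalled in Section~\ref{sec:schemes_modified_weak}) to obtain the exact identity
\[
\mathbb{E}\left[(Q^n-Q^0)\otimes(Q^n-Q^0)\right] = n\,C_0 + \sum_{m=1}^{n-1}(n-m)\left(C_m + C_m^T\right), \qquad C_m := \mathbb{E}\left[\delta^0\otimes\delta^m\right].
\]
Dividing by $n\dt$ and letting $n\to+\infty$ then gives $\mathscr{D}_\dt = \dt^{-1}\big(C_0 + \sum_{m\geq 1}(C_m+C_m^T)\big)$. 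The uniform geometric ergodicity of Lemma~\ref{lem:geometric_ergodicity} guarantees that the $C_m$ decay exponentially in $m$ at a rate independent of $\dt$, which simultaneously ensures convergence of the series and justifies exchanging the limit $n\to+\infty$ with the summation. Since Lemma~\ref{lem:geometric_ergodicity} and Theorem~\ref{thm:improved_GK} both hold for the midpoint proposal~\eqref{eq:proposal_weak_2nd_order} and the HMC proposal~\eqref{eq:proposal_HMC}, and for both rules in~\eqref{eq:acceptance_rate}, the same reduction applies to each of the four cases, the only scheme-dependent inputs being the leading expansion of the increment and the value of the acceptance factor~$a$.

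The second step is to expand each increment into its conditional mean and its martingale fluctuation. Setting $\bar\delta(q):=\mathbb{E}_{G,U}[\delta_\dt(q,G,U)]$ and $\xi^k:=\delta^k-\bar\delta(q^k)$, the fluctuation $\xi^k$ has zero mean given $q^k$ and is of size $\mathrm{O}(\dt^{1/2})$, whereas $\bar\delta$ is of size $\mathrm{O}(\dt)$, with leading behaviour $\bar\delta\approx -a\beta\dt\,\nabla V$ (the acceptance factor $a$ being the limiting acceptance probability of the rule used, close to $1$ for Metropolis--Hastings and close to $1/2$ for Barker). Because the conditional mean of $\xi^0$ vanishes, the diagonal term collapses to $C_0 = \mathbb{E}[\xi^0\otimes\xi^0] + \mathbb{E}[\bar\delta(q^0)\otimes\bar\delta(q^0)]$; at leading order the fluctuation part produces the bare diffusion $2a\,\Id\,\dt$. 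For $m\geq1$, conditioning on $q^m$ gives $C_m = \mathbb{E}[\delta^0\otimes\bar\delta(q^m)]$, and a Taylor expansion of $\bar\delta(q^m)$ around the noise-free trajectory shows that the drift self-correlations together with the noise--drift cross-correlations $\mathbb{E}[\xi^0\otimes\bar\delta(q^m)]$ — the latter carrying the crucial negative sign, since a noise kick is on average opposed by the force at later times — combine into the discrete force autocorrelation structure, up to discrete Itô-type (Hessian) corrections that the analysis must track. This is the discrete counterpart of the continuous identity relating~\eqref{eq:Einstein_continuous} and~\eqref{eq:GK_continuous}.

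The third step converts the surviving discrete sums into their continuous limits. Applying Theorem~\ref{thm:improved_GK} to the scalar observables $\psi=\partial_iV$ and $\varphi=\partial_jV$, which lie in $\widetilde{C}^{\infty}(\cM)$ since $\int_\cM\nabla V\,d\mu=0$ by periodicity, turns each sum $\dt\sum_{m\geq1}\mathbb{E}_\dt[\partial_iV(q^m)\,\partial_jV(q^0)]$ into the integral $\int_0^{+\infty}\mathbb{E}[\partial_iV(q_t)\,\partial_jV(q_0)]\,dt$ up to remainders of order $\dt^{\alpha}$. The boundary term $(\tfrac12-a)\dt\int_\cM\partial_iV\partial_jV\,d\mu$, arising from discarding the $n=0$ contribution, vanishes precisely when $a=1/2$ (the Barker case), which is exactly what is needed for the clean $\dt^{2}$ error there. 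Recognizing the assembled expression through the tensorial Green--Kubo identity for the continuous dynamics (whose trace is~\eqref{eq:GK_continuous}) then identifies the leading part of $\mathscr{D}_\dt$ with $a\,\mathscr{D} = 2a\,\Id - 2a\beta^2\int_0^{+\infty}\mathbb{E}[\nabla V(q_t)\otimes\nabla V(q_0)]_{\mathrm{sym}}\,dt$, collecting everything of smaller order into $\dt^{\alpha}\,\widetilde{\mathscr{D}}_\dt$.

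The main obstacle is the bookkeeping of all contributions of order $\dt$ to $\mathscr{D}_\dt$ and their reassembly with the correct signs and the correct factor $a$: one must carefully track the sub-leading corrections to $C_0$, the noise--drift cross-correlations in the $C_m$ for $m\geq1$, and the discrete Itô/Hessian terms, and verify that they organize themselves into exactly the force autocorrelation sum handled by Theorem~\ref{thm:improved_GK}. The second difficulty, which is more technical but essential, is to show that all remainders are $\mathrm{O}(\dt^{\alpha})$ \emph{uniformly} in the truncation level $n$ and in the limit $n\to+\infty$; this uniformity rests on the resolvent bound~\eqref{eq:bound_discrete_generator} of Lemma~\ref{lem:geometric_ergodicity}, together with uniform-in-$\dt$ bounds on the moments of the increments $\delta^k$ and on the derivatives of $V$ entering the Taylor expansions, the latter being guaranteed by the smoothness of $V$ on the compact space $\cM$.
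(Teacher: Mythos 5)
Your route differs from the paper's: you reduce the Einstein tensor to a sum of increment autocorrelations $C_m$ and try to feed the surviving pieces into Theorem~\ref{thm:improved_GK}, whereas the paper solves the discrete Poisson equation $(P_\dt-\Id)N_\dt = \overline{\delta}_\dt$ (Lemma~\ref{lem:solution_Poisson}, with $N_\dt = -\cL^{-1}\nabla V + \dt^\alpha\widetilde N_\alpha + \dots$), writes $Q^n-Q^0$ as $N_\dt(q^n)-N_\dt(q^0)$ plus a sum of stationary martingale increments $M_\dt^m$, and obtains $\xi^T\mathscr{D}_\dt\xi = \dt^{-1}\mathbb{E}[(\xi^T M_\dt^0)^2]$ in a single stroke. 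The decisive step there is that the order-$\dt$ correction to this one-step variance, computed via Lemma~\ref{lem:expansion_overline}, equals $2\cL\big(\left[\xi-\nabla(\xi^T N_0)\right]^2\big)$ and therefore has vanishing $\mu$-average; this is what upgrades the error from $\mathrm{O}(\dt)$ to $\mathrm{O}(\dt^\alpha)$.

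Your proposal has a genuine gap precisely at this point. First, Theorem~\ref{thm:improved_GK} only controls correlations of the form $\mathbb{E}_\dt[\psi(q^m)\varphi(q^0)]$ for functions of the state, so it covers the drift--drift part of $\sum_m C_m$ but not the noise--drift cross terms $\mathbb{E}[\xi^0\otimes\overline{\delta}(q^m)]$, since $\xi^0$ depends on $(G^0,U^0)$ and not on $q^0$ alone. Handling these requires summing $P_\dt^{m-1}\overline{\delta}_\dt$ over $m$, i.e.\ inverting $\Id-P_\dt$ on $\overline{\delta}_\dt$ with $\dt$-uniform control and with the expansion $\overline{\delta}_\dt = -\dt(a_1+a_2\dt\,\cL)\nabla V + \mathrm{O}(\dt^{\alpha+1})$ --- which is exactly the Poisson-corrector machinery you were hoping to avoid; note also that the naive bound $|C_m|\leq C\dt^{3/2}\rme^{-\lambda(m-1)\dt}$ only gives $\dt^{-1}\sum_m|C_m| = \mathrm{O}(\dt^{-1/2})$, so even the $\mathrm{O}(1)$ limit of your series rests on cancellations you have not exhibited. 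Second, and more importantly, each of $C_0/\dt$, the summed cross terms, and the discrete It\^o/Hessian corrections individually carries a nonzero $\mathrm{O}(\dt)$ contribution; without an identity showing that their sum is a total $\cL$-derivative (hence integrates to zero against $\mu$), your argument only yields $\mathscr{D}_\dt = a\mathscr{D} + \mathrm{O}(\dt)$, which is the pre-existing MALA-type estimate rather than the improvement claimed by the theorem. You name this bookkeeping as ``the main obstacle'' but supply no mechanism for the cancellation, and that mechanism is the heart of the proof.
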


It is possible to generalize such fluctuation formulas to more general increments $f(q,\delta_\dt(q,G,U))$ instead of $\delta_\dt(q,G,U)$, but the corresponding formulas are quite complicated and we therefore refrain from stating them. An immediate corollary of Theorem~\ref{thm:fluctuation} is the following error estimate on the self-diffusion:
\begin{equation}
\label{eq:prediction_Einstein_additive}
\mathcal{D}_\dt^{\rm Einstein} = \mathcal{D}  + \dt^\alpha \, \widetilde{\mathcal{D}}_\dt^{\rm Einstein}, 
\qquad 
\mathcal{D}_\dt^{\rm Einstein} = \frac{1}{2ad}\Tr\left(\mathscr{D}_\dt\right),
\end{equation}
where we recall $a=1$ and $\alpha = 3/2$ for the Metropolis-Hastings rule, while $a=1/2$ and and $\alpha = 2$ for the Barker rule. The error estimates provided by this formula are consistent with the ones provided by the Green-Kubo approach (see~\eqref{eq:error_bounds_GK}).

\medskip

The proofs of Theorems~\ref{thm:improved_GK} and~\ref{thm:fluctuation}, which can be read in Sections~\ref{sec:proof_thm:improved_GK} and~\ref{sec:proof_improved_Einstein} respectively, crucially rely on the following weak-type expansion of the discrete generator associated with the proposals~\eqref{eq:proposal_weak_2nd_order} or~\eqref{eq:proposal_HMC}. 

\begin{lemma}
  \label{lem:weak_type_expansion}
  For any $\psi \in C^\infty(\cM)$,
  \begin{equation}
    \label{eq:weak_type_expansion}
    \frac{P_\dt-\Id}{\dt} \psi = a \left(\cL \psi + \frac{\dt}{2} \cL^2 \psi\right) + \dt^{\alpha} \, r_{\psi,\dt},
\end{equation}
  where $r_{\psi,\dt}$ is uniformly bounded in $L^\infty(\cM)$ for $\dt$ sufficiently small; and $\alpha = 2$, $a=1/2$ for schemes using the Barker rule, while $\alpha =3/2$, $a=1$ for Metropolis-based methods.
\end{lemma}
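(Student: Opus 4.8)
The plan is to compute the action of the discrete generator on a smooth test function by Taylor-expanding both the proposal map and the acceptance rate in powers of $\sqrt{\dt}$, then take the Gaussian expectation term by term, exploiting the fact that odd moments of $G$ vanish and even moments contribute at successive orders. Starting from the integral representation
\[
\frac{P_\dt-\Id}{\dt}\psi(q) = \mathbb{E}_G\!\left[A_\dt\bigl(q,\Phi_\dt(q,G)\bigr)\,\frac{\psi(\Phi_\dt(q,G))-\psi(q)}{\dt}\right],
\]
I would first expand the increment $\Phi_\dt(q,G)-q$ in powers of $\sqrt{\dt}$ (for both proposals, using the implicit-function expansion of the midpoint scheme~\eqref{eq:proposal_weak_2nd_order} or the explicit expansion of~\eqref{eq:proposal_HMC}), and then expand $\psi(\Phi_\dt(q,G))-\psi(q)$ via its Taylor series. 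The leading contribution $\sqrt{2\dt}\,G\cdot\nabla\psi(q)$ divided by $\dt$ is of order $\dt^{-1/2}$, so the whole calculation requires tracking terms up to order $\dt^{3/2}$ inside the bracket before dividing, which means expanding $\psi(\Phi_\dt(q,G))-\psi(q)$ to fourth order in $G$ and $\Phi_\dt-q$ to the corresponding orders.

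The essential point is to understand the interplay with the acceptance rate. For the Metropolis rule, $A^{\rm MH}_\dt = \min(1,\rme^{-\alpha_\dt})$, and the key observation (as in~\cite{FHS14,LMS13}) is that the rejection factor $1-A^{\rm MH}_\dt$ is of order $\dt^{3/2}$ in $G$, so that $A^{\rm MH}_\dt = 1 + \mathrm{O}(\dt^{3/2})$ on a set of full Gaussian measure; the correction it produces is therefore of order $\dt^{3/2}$ relative to the leading $\dt^{-1/2}$ term, i.e. of order $\dt$ after division, which is exactly the $\alpha=3/2$ remainder. For the Barker rule, by contrast, $A^{\rm Barker}_\dt = (1+\rme^{\alpha_\dt})^{-1}$, and since $\alpha_\dt\to 0$ as $\dt\to 0$ one expands around $\alpha_\dt=0$ to get $A^{\rm Barker}_\dt = \tfrac12 - \tfrac14\alpha_\dt + \mathrm{O}(\alpha_\dt^3)$. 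The leading factor $\tfrac12$ is what produces the prefactor $a=1/2$, and the crucial cancellation is that the odd part of $\alpha_\dt$ in $G$ (which is of order $\dt^{1/2}$) combines with the odd part of $\psi(\Phi_\dt(q,G))-\psi(q)$ to yield, after the Gaussian average, precisely the generator structure $\tfrac12(\cL\psi + \tfrac{\dt}{2}\cL^2\psi)$ with the next correction pushed to order $\dt^2$.

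The main obstacle I anticipate is the bookkeeping needed to verify that, for the Barker rule, the contributions of order $\dt$ (which would give a remainder of order $\dt^{3/2}$ rather than the claimed $\dt^2$) actually cancel. This is the algebraic heart of the lemma: one must check that the $\mathrm{O}(\alpha_\dt)$ correction to the Barker rate, when paired with the expansion of the increment and averaged against the Gaussian, produces no net contribution at the intermediate order, leaving $a(\cL\psi+\tfrac{\dt}{2}\cL^2\psi)$ with a genuine $\dt^2$ remainder. Establishing this requires identifying $\alpha_\dt$ precisely for each proposal --- for the midpoint scheme via the second line of~\eqref{eq:def_alpha_Barker}, which already exhibits the convenient cancellation $\alpha_\dt = \beta\bigl(V(q')-V(q)-\nabla V(\tfrac{q+q'}{2})\cdot(q'-q)\bigr) = \mathrm{O}(|q'-q|^3)$, and for the HMC proposal via the Hamiltonian energy difference~\eqref{eq:def_alpha_Barker_HMC}, which is of the same order by symplecticity and reversibility of the position-Verlet step~\eqref{eq:Verlet_reformulation}. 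Finally, I would control the remainder uniformly in $L^\infty(\cM)$ by using compactness of $\cM$ and smoothness of $V$ to bound all derivatives appearing in the expansion, together with the Gaussian tail to dominate the integrals; the boundedness of these derivatives on the compact torus is what makes every implicit constant uniform.
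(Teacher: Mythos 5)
Your overall strategy---expand the proposal map and the acceptance probability in powers of $\sqrt{\dt}$, take the Gaussian average, and use the parity of the coefficients in $G$ to eliminate half-integer powers of the timestep---is exactly the route the paper follows. The gap is at the step that is the computational core of the lemma. For both proposals the \emph{un-Metropolized} one-step map is only of weak order~1: averaging $\psi(\Phi_\dt(q,G))-\psi(q)$ over $G$ yields, at order $\dt$ after division by $\dt$, not $\tfrac{\dt}{2}\cL^2\psi$ but $\tfrac{\dt}{2}\cL^2\psi+\dt\,\bigl(\widetilde F+\tfrac14\nabla(\Delta V)\bigr)\cdot\nabla\psi$, i.e.\ a spurious drift survives (this is the content of Remark~\ref{rmk:add_drift}). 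The acceptance/rejection step must therefore contribute a \emph{nonzero} order-$\dt$ term that cancels this drift, and verifying the cancellation is most of the work. In the Metropolis case you assert that since $1-A^{\rm MH}_\dt=\mathrm{O}(\dt^{3/2})$ the correction is ``of order $\dt$ after division, which is exactly the $\alpha=3/2$ remainder''. An order-$\dt$ term cannot be absorbed into an order-$\dt^{3/2}$ remainder: the rejection contributes $\dt\,f_{3/2}\cdot\nabla\psi$ with $f_{3/2}(q)=-\sqrt2\int_{\R^d}\min\bigl(0,\xi_{3/2}(q,g)\bigr)\,g\,\rme^{-g^2/2}(2\pi)^{-d/2}\,dg$, where $\dt^{3/2}\xi_{3/2}$ is the leading term of $\alpha_\dt(q,\Phi_\dt(q,G))$; one must evaluate this integral---using the antisymmetry $\xi_{3/2}(q,-g)=-\xi_{3/2}(q,g)$ to replace the integral over $\{\xi_{3/2}\ge 0\}$ by half the full Gaussian integral---and check that it equals $-\bigl(\widetilde F+\tfrac14\nabla(\Delta V)\bigr)$. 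Without that identity the claimed structure $a\bigl(\cL\psi+\tfrac{\dt}{2}\cL^2\psi\bigr)$ fails at order $\dt$.

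Two further points. You state that the odd part of $\alpha_\dt$ is of order $\dt^{1/2}$; it is of order $\dt^{3/2}$, as you yourself record two sentences later via $\alpha_\dt=\mathrm{O}(|q'-q|^3)$. The distinction matters: an order-$\dt^{1/2}$ odd part of $\alpha_\dt$, multiplied by the order-$\dt^{-1/2}$ odd leading term of the increment, would produce an even, order-one contribution that changes the limiting generator itself---this is precisely what happens for multiplicative noise in Lemma~\ref{lem:evolution_operator_multiplicative_noise}, where the remainder degrades to $\dt^{1/2}$ and $\dt$. Here the correct mechanism is that $-\tfrac14\dt^{3/2}\xi_{3/2}$ times $\sqrt{2/\dt}\,G\cdot\nabla\psi$ gives the order-$\dt$ drift correction, while $\dt^2\xi_2$ (even in $G$) and $\dt^{5/2}\xi_{5/2}$ (odd in $G$) always pair with pieces of the increment of opposite parity at each half-integer order; this parity bookkeeping is what yields $\alpha=2$ for Barker, versus $\alpha=3/2$ for Metropolis where the non-analytic $\min(1,\rme^{-x})$ produces order-$\dt^{3/2}$ terms with nonvanishing Gaussian average. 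Your plan names the right ingredients, but the drift cancellation and the parity bookkeeping---where the lemma is actually proved---are not carried out.
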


The proof of this lemma is provided in Section~\ref{sec:proof:lem:weak_type_expansion}. Let us emphasize that the proposals corrected by the Barker rule do not lead to evolution operators $P_\dt$ which have weak second order, and cannot even be written as $\rme^{\dt \cL/2}$ up to corrections of order~$\dt^3$. This is the reason why we call~\eqref{eq:weak_type_expansion} a ``second order type weak expansion'', by which we mean that the $\dt^2$ term in the expansion of $P_\dt$ is proportional to~$\cL^2$. This is in fact all we need to perform the proofs of Theorems~\ref{thm:improved_GK} and~\ref{thm:fluctuation}.

\begin{remark}
  \label{rmk:add_drift}
  As made precise in Section~\ref{sec:proof:lem:weak_type_expansion}, it is possible to replace the drift $-\nabla V$ in the proposals with more general drifts, namely $-\nabla V + \dt \, \widetilde{F}$ for the midpoint proposal~\eqref{eq:proposal_weak_2nd_order}, or $-\nabla V + \dt \, \nabla \widetilde{V}$ for the HMC proposal~\eqref{eq:proposal_HMC}. The choice $\widetilde{F} = -\nabla (\Delta V)/4$ ensures that the underlying scheme is of weak order~2 (see~\cite[Theorem~3.2]{ACVZ12}). However, even when $\widetilde{F} = 0$, in which case the underlying scheme is only of weak order~1, the average drift introduced by the acceptance/rejection procedures automatically corrects the drift and ensures that the expansion~\eqref{eq:weak_type_expansion} holds whatever the expression of~$\widetilde{F}$.
\end{remark}

\subsection{Extension to diffusions with multiplicative noise}
\label{sec:multiplicative_diff}

\subsubsection{Description of the dynamics}

In this section, we extend the results obtained for the dynamics~\eqref{eq:dynamics} with additive noise to dynamics with multiplicative noise. More precisely, we consider a position-dependent diffusion matrix $M \in \R^{d \times d}$, assumed to be a smooth function of the positions~$q \in \cM$ with values in the space of symmetric, definite, positive matrices. The corresponding overdamped Langevin dynamics reads
\begin{equation}
\label{eq:dynamics_mult}
dq_t = \Big(- \beta M(q_t)\nabla V(q_t) + \mathrm{div}(M)(q_t) \Big) dt + \sqrt{2} M^{1/2}(q_t) \, dW_t,
\end{equation}
where $\mathrm{div} M$ is the vector whose $i$th component is the divergence of the $i$th column of the matrix $M = [M_1,\dots,M_d]$: 
\[
\mathrm{div} M = \left(\mathrm{div} M_1,\dots,\mathrm{div} M_d\right)^T.
\] 
The generator of~\eqref{eq:dynamics_mult} acts on test functions~$\varphi$ as
\[
\cL \varphi = \Big(-\beta M \nabla V + \mathrm{div}(M)\Big)^T \nabla \varphi + M : \nabla^2 \varphi.
\]
A simple computation gives, for any smooth functions $\varphi,\phi$,
\[
\left\langle \varphi, \cL \phi \right\rangle_{L^2(\mu)} = - \int_\cM (\nabla \varphi)^T M (\nabla \phi) \, d\mu. 
\]
This shows that the canonical measure~$\mu$ is invariant by the dynamics~\eqref{eq:dynamics_mult} (choose $\varphi = 1$). 

\subsubsection{Definition of the self-diffusion}

To simplify the notation, we introduce the total drift
\begin{equation}
\label{eq:def_F_mult}
F(q) = - \beta M(q)\nabla V(q) + \mathrm{div}(M)(q).
\end{equation}
A straightforward adaptation of the proof of~\cite[Theorem~1]{FHS14} shows that the effective diffusion tensor associated with the dynamics~\eqref{eq:dynamics_mult} is well defined and that
\[
\mathscr{D} = \lim_{t \to +\infty} \mathbb{E}\left(\frac{Q_t-Q_0}{\sqrt{t}} \otimes \frac{Q_t-Q_0}{\sqrt{t}}\right) = 2\left( \int_\mathcal{M} M(q) \, \mu(dq) - \int_0^{+\infty} \mathbb{E}\left[ F(q_t) \otimes F(q_0) \right] dt \right).
\]
where
\[
Q_t = q_0 + \int_0^t F(q_s) \,ds + \sqrt{2} \int_0^t M^{1/2}(q_s) \, dW_s
\]
has values in~$\R^d$ and where the expectations are over all initial conditions $q_0$ distributed according to~$\mu$, and over all realizations of the overdamped Langevin dynamics~\eqref{eq:dynamics_mult}. The associated effective self-diffusion 
\begin{equation}
  \label{eq:self_diff_mult_noise}
  \mathcal{D} = \frac{1}{2d}\mathrm{Tr}(\mathscr{D}) = \frac1d\left( \int_\cM \Tr(M) \, d\mu - \int_0^{+\infty} \mathbb{E}\Big[ F(q_t)^T F(q_0)\Big] dt\right)
\end{equation}
can be estimated either via Green-Kubo or Einstein formulas. Note that the above equalities reduce to~\eqref{eq:Einstein_continuous}-\eqref{eq:GK_continuous} when $M = \Id$.

\subsubsection{Proposal functions}

Proposals that can be used in conjunction with a Metropolis-Hastings procedure, and which avoid the computation of the divergence of~$M$, are presented in~\cite{BDV13}. The resulting numerical method is a discretization of the dynamics~\eqref{eq:dynamics_mult} of the same weak order as the Metropolis-Hastings method based on proposals obtained by the standard Euler-Maruyama scheme (compare the results of~\cite[Section~5]{BDV13} and Lemma~\ref{lem:evolution_operator_multiplicative_noise} below). In order to simplify the presentation, we therefore restrict ourselves to the simple scheme
\begin{equation}
  \label{eq:EM_multiplicative}
  \widetilde{q}^{n+1} = \Phi_\dt(q^n,G^n) = q^n + F(q^n) \dt + \sqrt{2\dt} \, M^{1/2}(q^n) \, G^n,
\end{equation}
although its practical implementation may be cumbersome. Our results can however straightforwardly be extended to the proposal function described in~\cite{BDV13}.

As in the previous sections, we consider two types of acceptance/rejection procedures to correct the proposal~\eqref{eq:EM_multiplicative}: a Metropolis-Hastings rule and a Barker rule. The spatial dependence of the diffusion matrix has to be taken into account in the acceptance/rejection criteria. In fact, we still consider the acceptance rates~\eqref{eq:acceptance_rate} but change the definition of~$\alpha_\dt$ as follows:
\begin{equation}
\label{eq:alpha_dt_multiplicative}
\begin{aligned}
\alpha_\dt(q,q') = \beta \Big( V(q')-V(q) \Big) & + \frac{1}{4\dt}\Big(|q-q'-\dt F(q')|^2_{M(q')} - |q'-q-\dt F(q)|^2_{M(q)} \Big) \\& + \frac{1}{2} \Big[ \ln \big(\det M(q') \big) - \ln \big(\det M(q) \big) \Big],
\end{aligned}
\end{equation}
where, for a symmetric, definite, positive matrix~$M$, we introduced the norm $|q|_M = \sqrt{q^T M^{-1} q}$. The crucial estimate to obtain error bounds on Green-Kubo formulas and the effective diffusion is the following result, which is the equivalent of Lemma~\ref{lem:weak_type_expansion} for diffusions with multiplicative noise (see Section~\ref{sec:proof:lem:evolution_operator_multiplicative_noise} for the proof).

\begin{lemma}
\label{lem:evolution_operator_multiplicative_noise}
For any $\psi \in C^\infty(\cM)$,
\begin{equation}
  \label{eq:weak_type_expansion_multiplicative}
  \frac{P_\dt-\Id}{\dt} \psi = a \cL \psi + \dt^{\alpha} \, r_{\psi,\dt},
\end{equation}
where $a=1$ and $\alpha = 1/2$ when a Metropolis-Hastings rule is used, while $a=1/2$ and $\alpha = 1$ when a Barker rule is considered.
\end{lemma}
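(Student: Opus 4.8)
The plan is to expand the evolution operator $P_\dt$ acting on a smooth test function $\psi$ by a Taylor expansion of the integrand in~\eqref{eq:def_discrete_generator}, keeping track of powers of $\sqrt{\dt}$. Writing the proposal as $\Phi_\dt(q,G) = q + F(q)\dt + \sqrt{2\dt}\,M^{1/2}(q)G$, I first expand $\psi(\Phi_\dt(q,G)) - \psi(q)$ in powers of the increment $\Phi_\dt(q,G)-q$, which is of size $\mathrm{O}(\sqrt{\dt})$. The leading contributions come from the $\sqrt{2\dt}\,M^{1/2}G$ term; after dividing by $\dt$ as in~\eqref{eq:def_discrete_generator}, the term of order $\dt^{-1/2}$ is linear in~$G$ and vanishes upon Gaussian integration, while the surviving order~$\dt^0$ terms reassemble into $\cL\psi$, using $M^{1/2}(M^{1/2})^T = M$ and the fact that $\EE_G[G \otimes G] = \Id$ so that the second-order term yields $M : \nabla^2\psi$, and the drift $F$ yields $F^T\nabla\psi$.

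The heart of the matter is then the acceptance rate $A_\dt(q,\Phi_\dt(q,G))$, which multiplies the increment inside the integral. First I would expand $\alpha_\dt(q,\Phi_\dt(q,G))$ from~\eqref{eq:alpha_dt_multiplicative} in powers of $\sqrt{\dt}$. The key algebraic point, exactly as in the additive case of Lemma~\ref{lem:weak_type_expansion}, is that $\alpha_\dt$ has a dominant contribution of order $\sqrt{\dt}$ which is an odd (Gaussian) random variable in~$G$; explicitly, one expects $\alpha_\dt(q,\Phi_\dt(q,G)) = \sqrt{\dt}\,\xi(q,G) + \mathrm{O}(\dt)$ where $\xi$ is linear in~$G$ (hence centered Gaussian with a $q$-dependent variance). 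For the Metropolis rule, $A^{\rm MH}_\dt = \min(1,\rme^{-\alpha_\dt}) = 1 - \max(0,1-\rme^{-\alpha_\dt})$, and the nonsmoothness of the $\min$ forces the expansion to proceed in powers of $\sqrt{\dt}$ with a leading correction of order $\sqrt{\dt}$, whence $a=1$ and $\alpha=1/2$. For the Barker rule, $A^{\Barker}_\dt = (1+\rme^{\alpha_\dt})^{-1}$, which expands smoothly: $A^{\Barker}_\dt = \tfrac12 - \tfrac14\alpha_\dt + \mathrm{O}(\alpha_\dt^3)$, so the leading constant is $\tfrac12$ (giving $a=1/2$) and the $\mathrm{O}(\sqrt{\dt})$ term $-\tfrac14\sqrt{\dt}\,\xi$ is odd in~$G$ and integrates to zero against the Gaussian, pushing the error to order~$\dt$ and yielding $\alpha=1$.

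The computation to carry out carefully is the Gaussian integration of the \emph{product} of the acceptance-rate correction with the increment $\Phi_\dt(q,G)-q$. The surviving cross terms are where the averaged drift correction induced by the acceptance step must exactly reproduce the full drift $F = -\beta M\nabla V + \mathrm{div}(M)$ of the generator, including the $\mathrm{div}(M)$ contribution coming from the $\ln\det M$ terms and the spatial variation of $M^{1/2}$ in~\eqref{eq:alpha_dt_multiplicative}. This is the analogue of the ``automatic drift correction'' noted in Remark~\ref{rmk:add_drift}, and verifying it in the multiplicative case is the main obstacle: one must combine the expansion of $V(\Phi_\dt)-V(q)$, the Mahalanobis-norm difference $|\cdot|^2_{M(q')} - |\cdot|^2_{M(q)}$, and the log-determinant difference, then integrate the resulting cubic-in-$G$ Gaussian moments, and check that all the pieces assemble into $a\,\cL\psi$ with no residual $\dt^0$ term beyond it.

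Finally, I would control the remainder $r_{\psi,\dt}$ uniformly in $L^\infty(\cM)$. Because $\cM$ is compact and $V, M, M^{1/2}, M^{-1}$ are smooth (so all derivatives and the eigenvalues of $M$ are bounded above and below), all the Taylor remainders are controlled by Gaussian moments with coefficients bounded uniformly in $q$; integrability of these moments against the Gaussian weight gives the stated uniform bound on $r_{\psi,\dt}$ for $\dt$ sufficiently small. The only delicate point for the Metropolis rule is that the $\min$ is only Lipschitz, so the remainder estimate must be phrased through $|\min(1,\rme^{-\alpha})-(1-\max(0,\alpha)+\dots)|$-type bounds rather than a smooth Taylor expansion; these are handled exactly as in the proof of Lemma~\ref{lem:weak_type_expansion}, to which this proof reduces when $M=\Id$.
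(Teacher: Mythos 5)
Your overall architecture is the same as the paper's: expand the unadjusted increment to recover $\cL\psi$, expand $\alpha_\dt$ from~\eqref{eq:alpha_dt_multiplicative} to leading order $\sqrt{\dt}$, expand the two acceptance rates to identify $a$, and control the cross term. However, there are two concrete problems. First, the leading term $\xi_{1/2}(q,G)$ of $\alpha_\dt(q,\Phi_\dt(q,G))$ is \emph{not} linear in $G$ (hence not Gaussian): the variation of the Mahalanobis norm, $|q-q'-\dt F(q')|^2_{M(q')}-|q-q'-\dt F(q')|^2_{M(q)}$, contributes a term of the form $-\tfrac12 G^T M^{-1/2}\bigl(DM\cdot M^{1/2}G\bigr)M^{-1/2}G$, which is cubic in $G$. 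Only oddness in $G$ survives, and that is what the argument must use.

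Second, and more seriously, the parity reasoning you give does not close the proof. Oddness of $\xi_{1/2}$ makes $\EE_G[\alpha_\dt]$ small, but the quantity that determines whether~\eqref{eq:weak_type_expansion_multiplicative} holds is the cross term $\EE_G\bigl[(A_\dt(q,\Phi_\dt(q,G))-a)\,(\psi(\Phi_\dt(q,G))-\psi(q))\bigr]/\dt$, whose order-$\dt^0$ contribution pairs the odd function $\xi_{1/2}(q,G)$ with the odd leading increment $\sqrt{2\dt}\,G^T M^{1/2}\nabla\psi$; the product of two odd functions is even and does \emph{not} vanish by symmetry. Moreover, since the proposal~\eqref{eq:EM_multiplicative} already carries the full drift $F=-\beta M\nabla V+\mathrm{div}(M)$, the acceptance step must contribute \emph{no} additional drift at order $\dt^0$; i.e., one must prove the identity $\EE_G\bigl[\xi_{1/2}(q,G)\,(G^Tv)\bigr]=0$ for every $v\in\R^d$. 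This is exactly the paper's Lemma~\ref{lem:avg_drit_0_mult}, a genuine computation: one evaluates the fourth Gaussian moments of the cubic piece via $\EE_G[(\mathcal{A}:G^{\otimes 3})(G^Tv)]=\sum_{i,j}v_i(\mathcal{A}_{jji}+\mathcal{A}_{jij}+\mathcal{A}_{ijj})$ and checks that the three contractions reproduce precisely the $\mathrm{div}(M)$ and $\Tr\bigl(M^{-1}[DM\cdot M^{1/2}G]\bigr)$ terms coming from the log-determinant and the drift. You correctly flag this as ``the main obstacle'' but do not carry it out; without it the conclusion $\frac{P_\dt-\Id}{\dt}\psi=a\cL\psi+\mathrm{O}(\dt^\alpha)$ is not established -- one would be left with a possible spurious first-order differential operator at order $\dt^0$. (A smaller point: in the Barker case the claim that the error is $\mathrm{O}(\dt)$ rather than $\mathrm{O}(\sqrt{\dt})$ also requires a parity argument at the next order, pairing the odd $\xi_{1/2}$ with the even $\dt$-order part of the increment and the even $\xi_2$ with the odd part, as the paper notes explicitly.)
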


Note that the remainder terms are much larger than for diffusions with additive noise (where $\alpha=3/2$ for the Metropolis-Hastings rule and $\alpha=2$ for the Barker rule). 

\subsubsection{Error estimates on the self-diffusion}

A straightforward adpatation of the proofs of Lemma~\ref{lem:geometric_ergodicity} (see Remark~\ref{rmk:extension_ergo_mult}), Theorems~\ref{thm:improved_GK} and~\ref{thm:fluctuation} shows that, keeping the same notation as in Section~\ref{sec:error_modified_weak},
\[
\int_0^{+\infty} \mathbb{E}\Big[\psi(q_t)\,\varphi(q_0)\Big] dt = a \sum_{n=0}^{+\infty} \mathbb{E}_\dt \left[ \psi(q^n)\, \varphi(q^0) \right] + \mathrm{O}\left(\dt^\alpha\right),
\]
so that
\begin{equation}
\label{eq:approx_GK_mult}
\frac1d\left( \int_\cM \Tr(M) \, d\mu - a\sum_{n=0}^{+\infty} \mathbb{E}_\dt \left[ \psi(q^n)\, \varphi(q^0) \right]\right) = \mathcal{D} + \mathrm{O}\left(\dt^\alpha\right),
\end{equation}
while
\begin{equation}
\label{eq:approx_Einstein_mult}
\lim_{n \to +\infty} \frac{1}{a} \mathbb{E}\left[ \left(\frac{Q^n-Q^0}{\sqrt{n\dt}}\right)^2\right] = \mathcal{D} + \mathrm{O}\left(\dt^\alpha\right),
\end{equation}
with $a=1$ and $\alpha = 1/2$ when a Metropolis-Hastings rule is used, while $a=1/2$ and $\alpha = 1$ when a Barker rule is considered. Note that, once again, the Barker rule allows to reduce the bias in the computation of the self-diffusion, here from $\sqrt{\dt}$ to~$\dt$.

\subsection{Numerical results}
\label{sec:numerics_diffusion}

We illustrate on a simple example the results obtained in the previous sections. We consider a one-dimensional system with $\cM = \mathbb{T}$, at the inverse temperature $\beta = 1$, and for the simple potential $V(q) = \cos(2\pi q)$ already used in~\cite{FHS14}. For the dynamics with multiplicative noise, we consider the positive diffusion coefficient
\[
M(q) = \left(\frac{1.5 + \cos(2\pi q)}{2}\right)^2.
\]
Reference values for the diffusion constant can be obtained as described in the Appendix. For dynamics with additive noise, $\mathcal{D} = 0.62386$ while $\mathcal{D} = 0.30478$ for dynamics with multiplicative noise. For the midpoint scheme, the proposed configuration is computed using a fixed point algorithm, initialized with the standard Euler-Maruyama scheme encoded by~\eqref{eq:EM}. The convergence treshold is set to $10^{-8}$ (this tolerance should be decreased in order to check the scaling of the rejection for smaller timesteps). About 10 fixed point iterations were needed for convergence for $\dt = 0.01$, and less iterations for smaller timesteps. 

We first study the average rejection rates, based on the computations performed in Section~\ref{sec:proof:lem:weak_type_expansion} and~\ref{sec:proof:lem:evolution_operator_multiplicative_noise}. For the Metropolis rule, the predictions are
\begin{equation}
\label{eq:scaling_Metropolis_rates}
0 \leq \mathbb{E}_\mu\left( 1 - A^{\rm MH}_\dt\Big(q,\Phi_\dt(q,G)\Big) \right) \leq K_{\rm MH} \dt^\alpha,
\end{equation}
with $\alpha = 3/2$ for dynamics with additive noise (see~\eqref{eq:rate_Metropolis_midpoint}), and $\alpha = 1/2$ for dynamics with multiplicative noise (see~\eqref{eq:rate_Metropolis_multiplicative}). The expectation is with respect to initial conditions distributed according to~$\mu$ and for all realizations of the standard Gaussian random variable~$G$. For the Barker rule, the rejection rate satisfies 
\begin{equation}
\label{eq:scaling_Barker_rates}
\left| 2\mathbb{E}_\mu\left( A^\Barker_\dt\Big(q,\Phi_\dt(q,G)\Big) \right) - 1 \right| \leq K_\Barker \dt^\alpha,
\qquad
\mathbb{E}_\mu\left|2A^\Barker_\dt\Big(q,\Phi_\dt(q,G)\Big) - 1 \right| \leq \widetilde{K}_\Barker \dt^\gamma,
\end{equation}
with $\alpha = 3$ and $\gamma = 3/2$ for dynamics with additive noise (see~\eqref{eq:scaling_rejection_rate_Barker} and~\eqref{eq:avg_rejection_rate_Barker}), and $\alpha = 1$ and $\gamma = 1/2$ for dynamics with multiplicative noise (see~\eqref{eq:rate_Barker_multiplicative}). In fact, 
\[
\mathbb{E}_\mu\left|2A^\Barker_\dt\Big(q,\Phi_\dt(q,G)\Big) - 1 \right| \simeq \mathbb{E}_\mu\left( 1 - A^{\rm MH}_\dt\Big(q,\Phi_\dt(q,G)\Big) \right). 
\]
The rejection rates were approximated using empirical averages over $10^9$ iterations at $\beta = 1$, starting from $q^0 = 0$. The results presented in Figure~\ref{fig:rejection} confirm the predicted rates.

\begin{figure}
\begin{center}
\includegraphics[width=8.2cm]{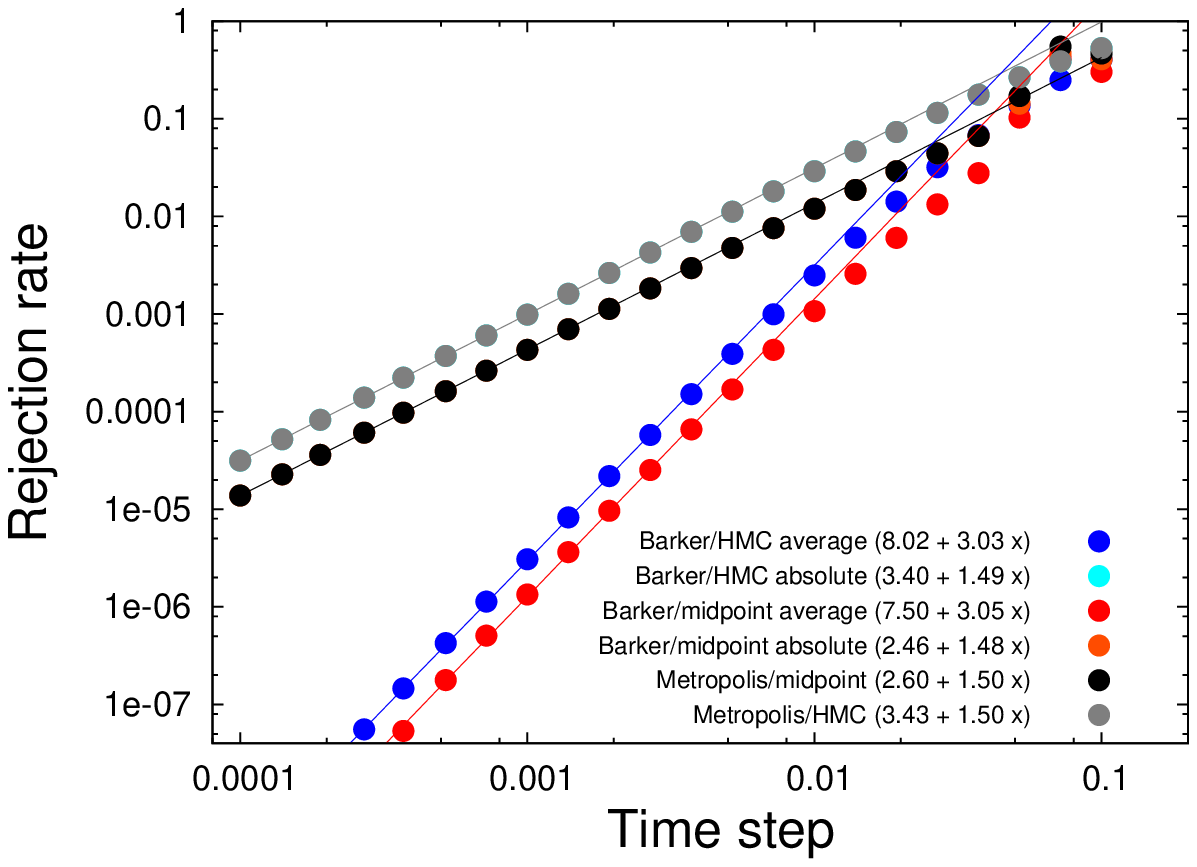}
\includegraphics[width=8.2cm]{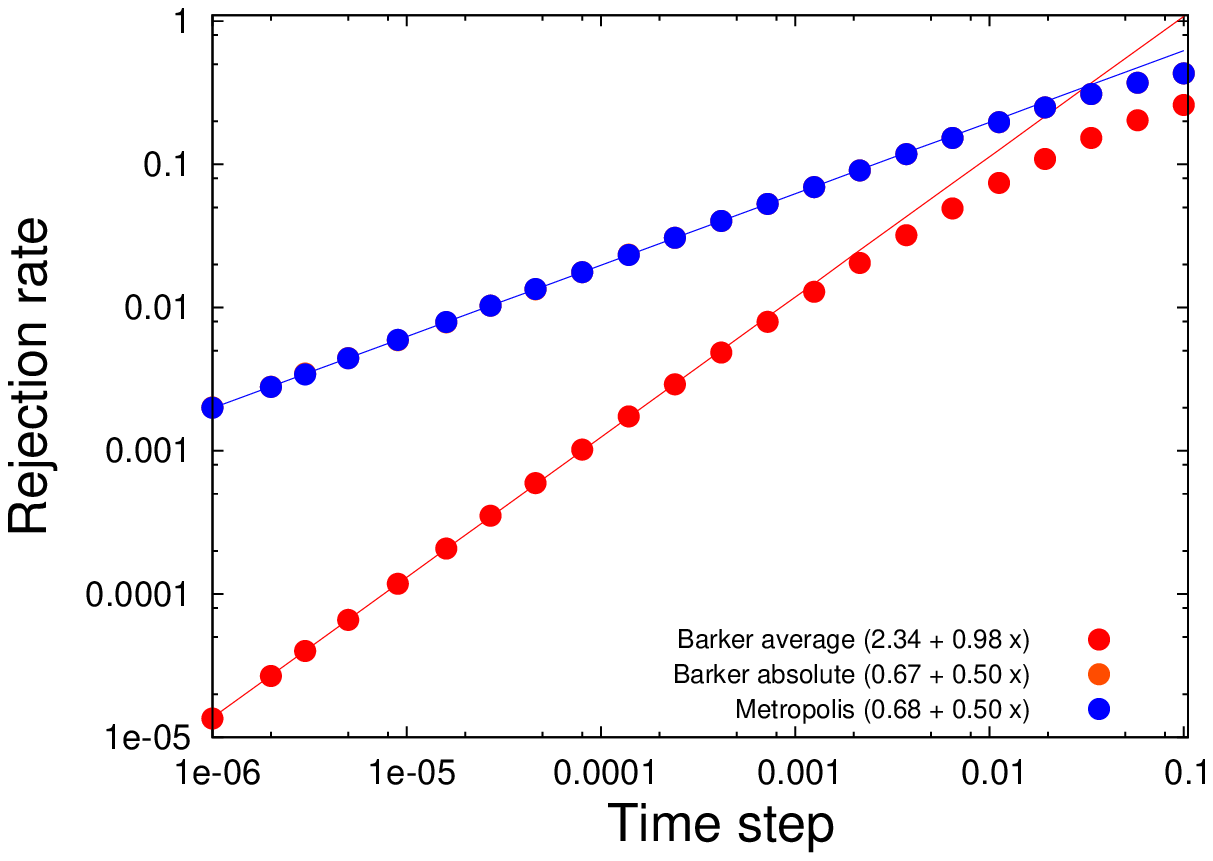}
\end{center}
\caption{\label{fig:rejection} Scaling of the rejection rates. 'Metropolis' corresponds to~\eqref{eq:scaling_Metropolis_rates}, while Barker average' corresponds to the first inequality in~\eqref{eq:scaling_Barker_rates} and 'Barker absolute' to the second one. Left: dynamics with additive noise. Right: dynamics with multiplicative noise. Note that 'Barker absolute' and 'Metropolis' are almost on top of each other. For both plots, we superimpose a linear fit in log-log scale, with $x = \log(\dt)$. }
\end{figure}

Figure~\ref{fig:error_diffusion} presents the estimated self-diffusions as a function of the timestep for the dynamics with additive noise~\eqref{eq:dynamics}. We refer to~\cite[Section~3.1]{FHS14} for the precise definitions of the numerical estimators. Expectations are replaced by empirical averages over $K$ realizations. For Green-Kubo formulas, the sum over the iterations is truncated to some maximal iteration index $\lfloor \tau/\dt \rfloor$, where $\tau$ is a given time. For the Einstein formula, we monitor the estimated mean-square displacement $\EE[(Q^n-Q^0)^2]$ as a function of time up to a maximal iteration index, the slope of this curve (obtained via a least square fit) being equal to twice the self-diffusion constant for the given timestep. The Green-Kubo estimation~\eqref{eq:approx_GK_addtive} was computed using $K=10^9$ realizations and an integration time $\tau = 0.6$, with initial conditions subsampled every 20~steps from a preliminary trajectory computed with $\dt_{\rm thm} = 0.005$. The Einstein estimation~\eqref{eq:prediction_Einstein_additive} was computed using $K = 10^7$ trajectories integrated over $N_{\rm Einstein} = 3 \times 10^5$ iterations, with initial conditions subsampled every 1000~steps from a preliminary trajectory computed with $\dt_{\rm thm} = 0.005$. As can be seen from the various curves in Figure~\ref{fig:error_diffusion}, the values estimated with all methods extrapolate to the analytically computed baseline value as $\dt \to 0$. The error is dramatically reduced by using the Barker rule, the midpoint and HMC schemes performing quite comparably. We also checked that the expected scalings of the errors as a function of~$\dt$ are indeed the ones predicted by our theoretical results. The errors are always larger with a Metropolis rule, though somewhat smaller with the HMC scheme compared to the standard Euler-Maruyama discretization. Also, the Green-Kubo formula leads to more reliable results in this simple case, as already noted in~\cite{FHS14}. In fact, with the most precise method (HMC and Barker rule), there is almost no bias up to timesteps of the order of~$\dt = 0.01$.

\begin{figure}
\begin{center}
\includegraphics[width=8.2cm]{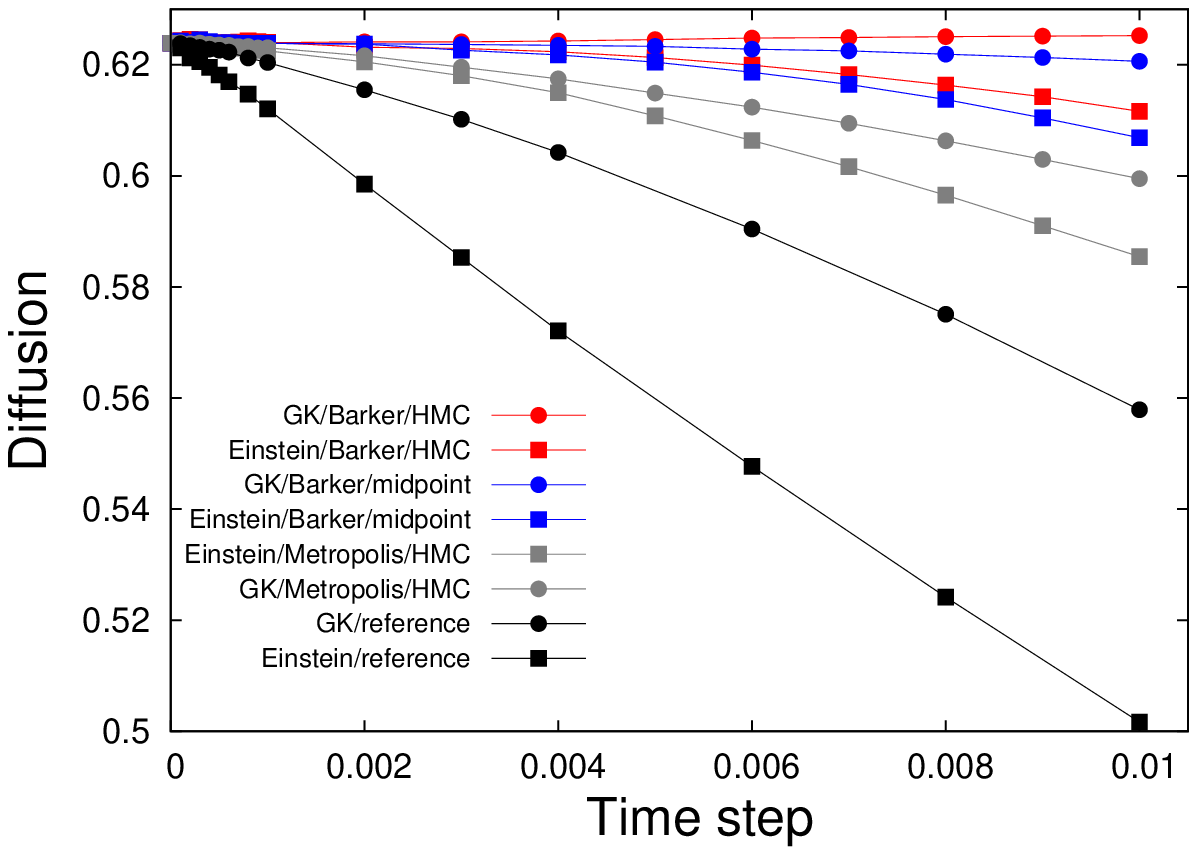}
\includegraphics[width=8.2cm]{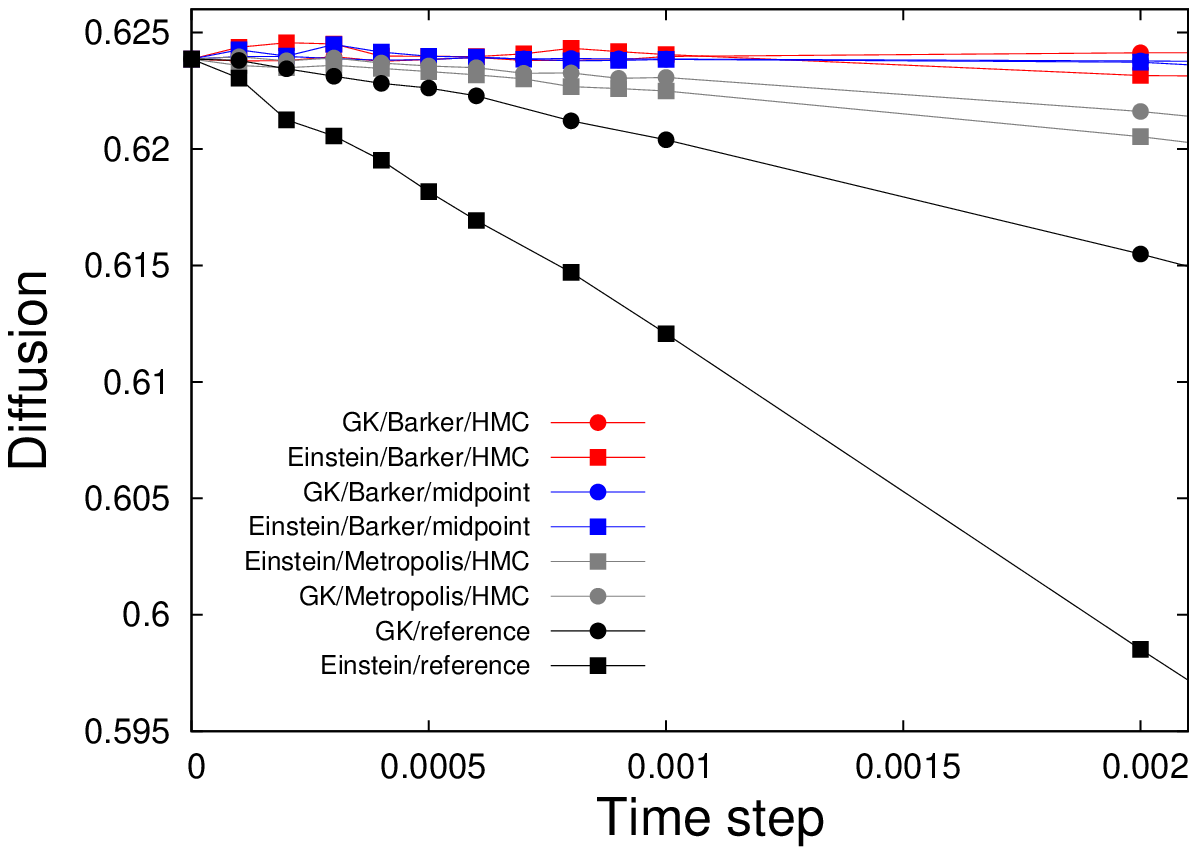}
\end{center}
\caption{\label{fig:error_diffusion} Left: estimated diffusion as a function of the time-step~$\dt$ for the dynamics with additive noise. Right: zoom on the smallest values of~$\dt$. The observed scalings of the error correspond to the ones predicted by the theory (namely, $\dt^2$ when the Barker rule is used with the mipoint or the HMC scheme, $\dt^{3/2}$ when a Metropolis rule is used with the latter two proposals, and $\dt$ in the reference case corresponding to the standard MALA scheme). The baseline value of the diffusion constant is analytically found to be $\mathcal{D} = 0.62386$. }
\end{figure}

Figure~\ref{fig:error_diffusion_multiplicative} presents the estimated self-diffusion constant as a function of the timestep for the dynamics with multiplicative noise~\eqref{eq:dynamics_mult}. The Green-Kubo estimates~\eqref{eq:approx_GK_mult} are obtained with $K=5 \times 10^6$ realizations for $\dt = 10^{-5}$, with a number of realizations increasing proportionally to the timestep up to $K = 10^{9}$. The integration time is $\tau = 2$. The Einstein estimates~\eqref{eq:approx_Einstein_mult} were computed with $K = 10^6$ trajectories integrated over $N_{\rm Einstein} = 10^7$ iterations. The values predicted with all methods extrapolate to the analytically computed baseline value as $\dt \to 0$. The use of a Barker rule instead of a Metropolis rule allows to drastically reduce the bias due to the timestep in the estimation of the self-diffusion constant. Again, the Green-Kubo formula seems more reliable. Finally, note that the error indeed seems to behave as $\sqrt{\dt}$ for very small~$\dt$ when the Metropolis rule is used. In any case, the variations of the estimated self-diffusion are quite sharp around $\dt=0$ with the Metropolis procedure. In contrast, the estimates obtained with the Barker rule are better behaved, which makes it possible to resort to Romberg extrapolation for moderately small values of the timestep.

\begin{figure}
\begin{center}
\includegraphics[width=8.2cm]{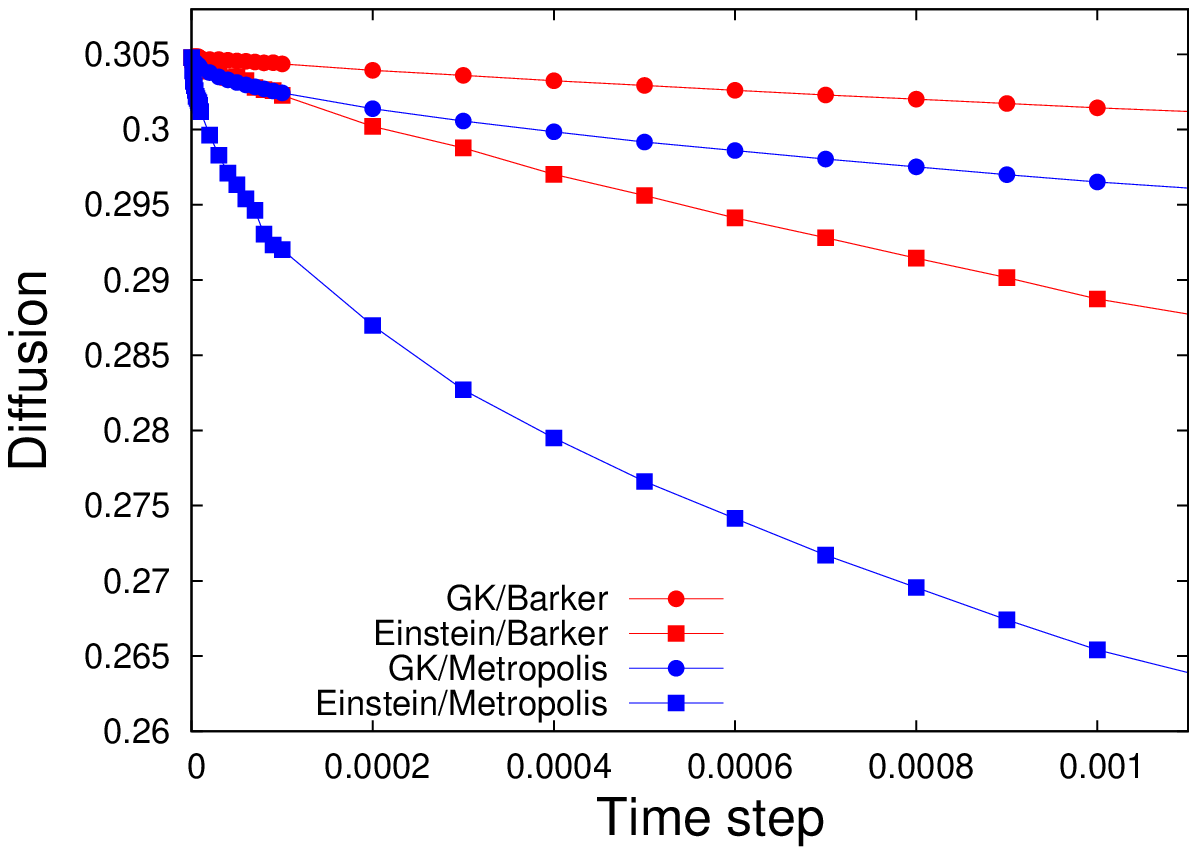}
\includegraphics[width=8.2cm]{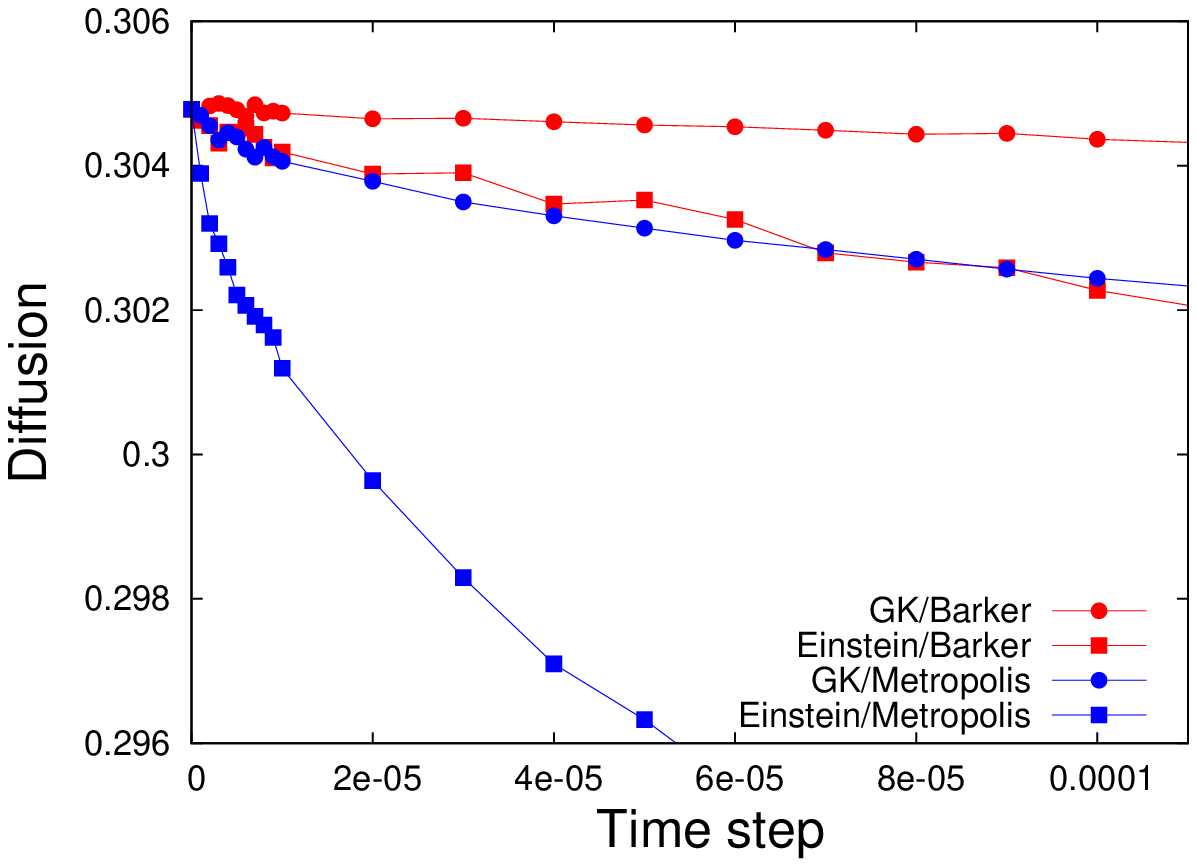}
\end{center}
\caption{\label{fig:error_diffusion_multiplicative} Left: estimated diffusion as a function of the time-step~$\dt$ for the dynamics with multiplicative noise. Right: zoom on the smallest values of~$\dt$. The observed scalings of the error correspond to the ones predicted by the theory (namely, $\dt$ for the Barker rule and $\sqrt{\dt}$ for the Metropolis rule). The baseline value of the diffusion constant is analytically found to be $\mathcal{D} = 0.30478$.}
\end{figure}

\section{Proofs}
\label{sec:proof}

The $n$th order differential of a function~$\psi$ applied to $n$ vectors $v_1,\dots,v_n \in \R^d$ is written 
\[
D^n \psi(q) \cdot (v_1 \otimes \dots \otimes v_n) = \sum_{i_1,\dots,i_n = 1}^d \partial^n_{i_1\dots i_n} \psi(q) v_{1,i_1} \dots v_{n,i_n},
\]
where $v_{k,i}$ is the $i$th component of the vector~$v_k$ and $\partial_i$ denotes the derivative with respect to the $i$-th component of~$q$. Note that $D^n \psi(q) \cdot (v_1 \otimes \dots \otimes v_n) = D^n \psi(q) \cdot (v_{\sigma(1)} \otimes \dots \otimes v_{\sigma(n)})$ for any permutation $\sigma$. When $v_1 = \dots = v_n$, the element $v_1 \otimes \dots \otimes v_n$ is simply denoted by $v_1^{\otimes n}$. These definitions can be extended to matrix-valued functions: for a given element $(v_1,\dots,v_n) \in (\R^d)^n$, the matrix $D^n M(q) \cdot v$ has components $D^n M_{ij}(q) \cdot (v_1 \otimes \dots \otimes v_n)$.

In order to simplify the notation, we write all proofs for $\beta = 1$. The constants $C > 0$ in the inequalities may also vary from one line to another. 


\subsection{Proof of Lemma~\ref{lem:improved_rejection_rate}}
\label{sec:proof:improved_rejection_rate}

In view of~\eqref{eq:Metropolis_ratio_strong}, we can rewrite $A^\mod_\dt(q,q')$ as $\min(\rme^{-\alpha_\dt(q,q')},1)$ with 
\[
\begin{aligned}
\alpha_\dt(q,q') & = V(q')-V(q) - \frac12 \Big[\ln \det\big(\Id + \dt \, \sigma(q')\big)-\ln \det\big(\Id + \dt \, \sigma(q)\big)\Big] \\
& \ \ + \frac{1}{4\dt}\big( q-q'+\dt \nabla V(q')-\dt^2 F(q') \big)^T(\Id + \dt \, \sigma(q')) \big( q-q'+\dt \nabla V(q')-\dt^2 F(q') \big) \\
& \ \ - \frac{1}{4\dt}\big( q'-q+\dt \nabla V(q)-\dt^2 F(q) \big)^T(\Id + \dt \, \sigma(q)) \big( q'-q+\dt \nabla V(q)-\dt^2 F(q) \big).
\end{aligned}
\]
We first perform an expansion of $\alpha_\dt\left(q,\Phi^\mod_\dt(q,G)\right)$ up to terms of order~$\dt^{5/2}$ in order to determine the correction terms $F(q)$ and $\sigma(q)$. The desired conclusion then follows from the inequality 
\begin{equation}
\label{eq:ineq_min_exp}
0 \leq 1 - \min(1,\rme^{-x}) \leq \max(0,x).
\end{equation}

We rewrite $\alpha_\dt(q,q')$ as
\[
\begin{aligned}
\alpha_\dt(q,q') & = V(q') - V(q) - \frac12 (q'-q)^T\left( \nabla V(q) + \nabla V(q') \right) + \frac{\dt}{4}\left(|\nabla V(q')|^2 - |\nabla V(q)|^2 \right) \\
& \ \ - \frac12 \Big[\ln \det\big(\Id + \dt \, \sigma(q')\big)-\ln \det\big(\Id + \dt \, \sigma(q)\big)\Big] + \frac{\dt}{2}(q'-q)^T\left( F(q) + F(q') \right) \\
& \ \ + \frac14 (q'-q)^T \big( \sigma(q')-\sigma(q)\big) (q'-q) - \frac{\dt}{2} (q'-q)^T \big(\sigma(q')\nabla V(q')+\sigma(q)\nabla V(q)\big)\\
& \ \ + \dt^2 \widetilde{\alpha}_\dt(q,q'),
\end{aligned}
\]
where
\[
\begin{aligned}
& \widetilde{\alpha}_\dt(q,q') = -\frac12 \left( F(q')^T \nabla V(q') - F(q)^T \nabla V(q)\right) + \frac{\dt}{4} \left(|F(q')|^2-|F(q)|^2\right) \\
& + \frac{1}{2} (q'-q)^T \big(\sigma(q')F(q')+\sigma(q)F(q)\big) \\
& + \frac14 \left[\big(\nabla V(q')-\dt F(q')\big)^T\sigma(q')\big(\nabla V(q')-\dt F(q')\big) - \big(\nabla V(q)-\dt F(q)\big)^T\sigma(q)\big(\nabla V(q)-\dt F(q)\big)\right].
\end{aligned}
\]
As made precise below in~\eqref{eq:estimate_widetilde_alpha_first}, the remainder $\widetilde{\alpha}_\dt(q,q')$ can be thought of as being of order~$\sqrt{\dt}$ when $q' = \Phi^\mod_\dt(q,G)$. We therefore focus on the first terms in the expression of $\alpha_\dt(q,q')$. A simple computation shows that
\begin{equation}
\label{eq:first_difference_strong}
\begin{aligned}
 V(q') - V(q) - \frac12 (q'-q)^T\left( \nabla V(q) + \nabla V(q') \right) & = -\frac{1}{12} D^3V(q)\cdot (q'-q)^{\otimes 3} -\frac{1}{24} D^4V(q)\cdot (q'-q)^{\otimes 4} \\
& \ - \frac{1}{24}\int_0^1 (1-t)^4 D^5V\big((1-t)q+tq'\big) \cdot(q'-q)^{\otimes 5} \, dt,
\end{aligned}
\end{equation}
so that
\[
\begin{aligned}
& V\left(\Phi^\mod_\dt(q,G)\right) - V(q) - \frac12 \left(\Phi^\mod_\dt(q,G)-q\right)^T\left( \nabla V(q) + \nabla V\left(\Phi^\mod_\dt(q,G)\right) \right) \\
& \qquad = -\frac{\sqrt{2}}{6} \dt^{3/2} \, D^3V(q)\cdot G^{\otimes 3} + \dt^2 \left( \frac12 D^3V(q)\cdot(\nabla V(q)\otimes G^{\otimes 2}) - \frac{1}{6} D^4V(q)\cdot G^{\otimes 4} \right) + \dt^{5/2} \cR_1(q,G),
\end{aligned}
\]
where the remainder $\cR_1(q,G)$ can be thought of as being uniformly bounded (see~\eqref{eq:estimate_widetilde_alpha} below and Remark~\ref{rmk:unbounded_spaces}). In the sequel, we no longer explicitly give the expressions of the integral remainder terms such as the last term on the right-hand side of~\eqref{eq:first_difference_strong} and simply write $\mathrm{O}(|q'-q|^r)$ for some integer power~$r$. With this notation,
\[
\begin{aligned}
|\nabla V(q')|^2 - |\nabla V(q)|^2 & = 2 \nabla V(q)^T \nabla^2 V(q) (q'-q) + (q'-q)^T \left(\nabla^2 V(q)\right)^2 (q'-q) \\
& \ \ + D^3V(q)\cdot \left( (q'-q)^{\otimes 2}\otimes \nabla V(q) \right) + \mathrm{O}\left(|q'-q|^3\right),
\end{aligned}
\]
so that
\[
\begin{aligned}
& \frac{\dt}{4}\left(\left|\nabla V\left(\Phi^\mod_\dt(q,G)\right)\right|^2 - |\nabla V(q)|^2 \right) = \frac{\sqrt{2} \dt^{3/2}}{2}\nabla V(q)^T \nabla^2 V(q) G \\
& \qquad\qquad + \frac{\dt^2}{2} \left( - \nabla V(q)^T \nabla^2 V(q)\nabla V(q) + G^T \left(\nabla^2 V(q)\right)^2 G + D^3V(q)\cdot \left( \nabla V(q) \otimes G^{\otimes 2} \right) \right) + \dt^{5/2} \cR_2(q,G).
\end{aligned}
\]
Here and in the sequel, remainders $\cR_k(q,G)$ satisfy the bounds made precise in~\eqref{eq:estimate_widetilde_alpha}. For the next term to consider in the expression of $\alpha_\dt(q,q')$, we use $\det\big(\Id + \dt \, \sigma(q)\big) = 1 + \dt \, \Tr(\sigma(q)) + \dt^2 r_{\det,\dt}(q)$ where the remainder $r_{\det,\dt}$ is a smooth function of~$q$. This leads to
\[
\ln \det\big(\Id + \dt \, \sigma(q')\big)-\ln \det\big(\Id + \dt \, \sigma(q)\big) = \dt \Tr\left(\sigma(q')-\sigma(q)\right) + \dt^2 \left( r_{\det,\dt}(q')- r_{\det,\dt}(q)\right),
\]
so that
\[
\begin{aligned}
& -\frac12 \left( \ln \det\left[\Id + \dt \, \sigma\left(\Phi^\mod_\dt(q,G)\right)\right]-\ln \det\left[\Id + \dt \, \sigma(q)\right] \right) = -\frac{\sqrt{2}\dt^{3/2}}{2} G^T \nabla \left(\Tr \sigma\right)(q) \\
& \qquad \qquad + \frac{\dt^2}{2} \left( \nabla V(q)^T \nabla \left(\Tr \sigma\right)(q) - \Tr\left[D^2 \sigma(q)\cdot G^{\otimes 2}\right] \right) + \dt^{5/2} \cR_3(q,G).
\end{aligned} 
\]
Moreover,
\[
\frac12 (q'-q)^T\left( F(q) + F(q') \right) = F(q)^T (q'-q) + \frac12 (q'-q)^T \left(DF(q)\cdot (q'-q)\right) + \mathrm{O}\left(|q'-q|^3\right),
\]
so that
\[
\begin{aligned}
& \frac{\dt}{2}\left(\Phi^\mod_\dt(q,G)-q\right)^T\left( F(q) + F\left(\Phi^\mod_\dt(q,G)\right) \right) \\
& \qquad = \sqrt{2}\dt^{3/2} F(q)^T G + \dt^2 \left( G^T \left( DF(q)\cdot G\right) - F(q)^T \nabla V(q)\right) + \dt^{5/2} \cR_4(q,G).
\end{aligned}
\]
In addition,
\[
\begin{aligned}
(q'-q)^T \big( \sigma(q')-\sigma(q)\big) (q'-q) & = (q'-q)^T \left( D\sigma(q)\cdot (q'-q) \right)(q'-q) \\
& \ \ + \frac12 (q'-q)^T \left( D^2\sigma(q)\cdot (q'-q)^{\otimes 2} \right)(q'-q) + \mathrm{O}\left(|q'-q|^5\right),
\end{aligned}
\]
so that
\[
\begin{aligned}
& \frac14 \left(\Phi^\mod_\dt(q,G)-q\right)^T \left( \sigma\left(\Phi^\mod_\dt(q,G)\right)-\sigma(q)\right) \left(\Phi^\mod_\dt(q,G)-q\right) = \frac{\sqrt{2}\dt^{3/2}}{2} G^T \left( D\sigma(q)\cdot G \right)G \\
& \qquad+ \dt^2 \left( - \nabla V(q)^T \left( D\sigma(q)\cdot G \right)G - \frac12 G^T \left( D\sigma(q)\cdot \nabla V(q) \right)G + \frac12 G^T \left( D^2\sigma(q)\cdot G^{\otimes 2} \right) G \right) \\
& \qquad + \dt^{5/2} \cR_5(q,G).
\end{aligned}
\]
The last term to consider is
\[
\begin{aligned}
(q'-q)^T \big(\sigma(q')\nabla V(q')+\sigma(q)\nabla V(q)\big) & = 2 (q'-q)^T \sigma(q)\nabla V(q) + (q'-q)^T \left( D\sigma(q)\cdot (q'-q)\right)\nabla V(q) \\
& \ \ + (q'-q)^T \sigma(q) \nabla^2 V(q) (q'-q) + \mathrm{O}\left(|q'-q|^3\right),
\end{aligned}
\]
so that
\[ 
\begin{aligned}
& - \frac{\dt}{2} \left(\Phi^\mod_\dt(q,G)-q\right)^T \left(\sigma\left(\Phi^\mod_\dt(q,G)\right)\nabla V\left(\Phi^\mod_\dt(q,G)\right)+\sigma(q)\nabla V(q)\right) = -\sqrt{2} \dt^{3/2} G^T \sigma(q)\nabla V(q) \\
& \qquad + \dt^2 \left(\nabla V(q)^T \sigma(q) \nabla V(q) - G^T \left( D\sigma(q)\cdot G\right)\nabla V(q) - G^T \sigma(q)\nabla^2 V(q)G\right) + \dt^{5/2} \cR_6(q,G).
\end{aligned}
\]
In conclusion,
\[
\alpha_\dt\left(q,\Phi^\mod_\dt(q,G)\right) = \dt^{3/2} \overline{\alpha}_{3/2}(q,G) + \dt^2 \overline{\alpha}_{2}(q,G) + \dt^{5/2} \cR(q,G), 
\]
with
\[
\begin{aligned}
\overline{\alpha}_{3/2}(q,G) & = \frac{\sqrt{2}}{2}\left(-\frac{1}{3} D^3V(q)\cdot G^{\otimes 3} + G^T \left( D\sigma(q)\cdot G \right)G \right) \\
& \ \ + \sqrt{2} G^T \left( \frac{1}{2}\nabla^2 V(q) \nabla V(q)^T -\frac{1}{2} \nabla \left(\Tr \sigma\right)(q) + F(q) - \sigma(q)\nabla V(q)\right).
\end{aligned}
\]
The choice $\sigma(q) = \nabla^2 V(q)/3$ allows to cancel the cubic term in~$G$, while 
\[
F(q) = \left(\sigma(q)-\frac12 \nabla^2V(q)\right)\nabla V(q) + \frac12 \nabla \left(\Tr \sigma\right)(q) = \frac16 \left( -\nabla^2V(q) \nabla V(q) + \frac16 \nabla (\Delta V)(q)\right)
\] 
ensures that the linear term in~$G$ disappear. For these choices of $\sigma$ and $F$, it is easy to see that there exist $K > 0$ and an integer~$p$ such that 
\begin{equation}
  \label{eq:estimate_widetilde_alpha_first} 
\left| \widetilde{\alpha}_\dt\left(q,\Phi^\mod_\dt(q,G)\right) \right| \leq K (1+|G|^p) \sqrt{\dt}, 
\end{equation}
and
\begin{equation}
  \label{eq:estimate_widetilde_alpha}
  |\cR_k(q,G)| \leq K(1+|G|^p).
\end{equation}
In fact, it turns out, quite unexpectedly, that the choice of~$F$ and~$\sigma$ also allows to cancel~$\overline{\alpha}_{2}(q,G)$. Indeed,
\[
\begin{aligned}
\overline{\alpha}_{2}(q,G) & = - \frac{1}{6} D^4V(q)\cdot G^{\otimes 4} + \frac12 G^T \left( D^2\sigma(q)\cdot G^{\otimes 2} \right) G \\
& \ \ + D^3V(q)\cdot(\nabla V(q)\otimes G^{\otimes 2}) + \frac12 G^T \left(\nabla^2 V(q)\right)^2 G - \frac12 \Tr\left[D^2 \sigma(q)\cdot G^{\otimes 2}\right] + G^T \left( DF(q)\cdot G\right)  \\
& \ \ - \frac12 G^T \left( D\sigma(q)\cdot \nabla V(q) \right)G - 2 G^T \left( D\sigma(q)\cdot G\right)\nabla V(q) - G^T \sigma(q)\nabla^2 V(q)G \\
& \ \ + \nabla V(q)^T \left( \left[ \sigma(q)- \frac12 \nabla^2 V(q)\right]\nabla V(q) + \frac12 \nabla \left(\Tr \sigma\right)(q) - F(q)\right).
\end{aligned}
\]
A simple computation then shows that $\overline{\alpha}_{2}(q,G) = 0$ (it is clear that the first and last lines in the above expression vanish; the fact that the sum of the second and third lines also vanishes requires a few additional manipulations). 

This finally allows to conclude that $\alpha_\dt\left(q,\Phi^\mod_\dt(q,G)\right) = \dt^{5/2} \cR(q,G)$ with a remainder $\cR(q,G)$ satisfying estimates similar to~\eqref{eq:estimate_widetilde_alpha}, and gives the desired conclusion in view of~\eqref{eq:ineq_min_exp}.

\begin{remark}
\label{rmk:unbounded_spaces}
The above proof can be extended to unbounded spaces by following the strategy presented in the proof of~\cite[Lemma~5.5]{BH13}, assuming some additional bounds on the derivatives of $V$, up to order 5. 
\end{remark}

\subsection{Proof of Theorem~\ref{thm:improved_strong}}
\label{sec:proof:thm:improved_strong}

The proof follows the same lines as the one performed in~\cite[Section~4.2]{BV09} for the usual MALA scheme (trivially adapted to the case of bounded configuration spaces), with the crucial improvement on the rejection rate made precise in Lemma~\ref{lem:improved_rejection_rate}. The main ingredient is an improved estimate on the single-step accuracy of the MALA scheme, which is itself obtained from error estimates for the modified (but un-metropolized) Euler scheme. The estimate on the strong accuracy of the scheme finally follows by a discrete Gronwall argument, where local errors are summed up over finite time-intervals.

\subsubsection*{Single-step accuracy of the un-metropolized scheme} 

We first prove that the modified Euler scheme has the same accuracy as the standard Euler scheme, compare with~\cite[Lemma~4.4]{BV09}. 

\begin{lemma}[Single step accuracy of the modified Euler scheme] 
  \label{1step_em}
  There exists $C > 0$ and $\dt^* > 0$ such that, for any $0 < \dt \leq  \dt^*$ and any $q_0 \in \cM$, 
  \[
  \begin{aligned}
  \mathbb{E}_{q_0}\left[\left|\Phi^\mod_\dt(q_0,W_\dt) - q_{\dt}\right|^2\right] & \leq C\dt^3, \\
  \left|\mathbb{E}_{q_0}\left(\Phi_\dt^\mod(q_0,W_\dt) - q_{\dt}\right) \right|  & \leq C\dt^2,
  \end{aligned}
  \]
  where $q_{\dt}$ is the solution at time $\dt$ of~\eqref{eq:dynamics} with initial condition $q_0$ at $t=0$, and where the expectation is over all realizations of the Brownian motion~$(W_t)_{0 \leq t \leq \dt}$.
\end{lemma}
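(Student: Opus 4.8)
The plan is to compare the modified un-metropolized scheme with the standard Euler-Maruyama proposal, for which the single-step local errors are classical (this is precisely~\cite[Lemma~4.4]{BV09}). Recalling that Euler-Maruyama has strong order~$1$ for additive-noise dynamics such as~\eqref{eq:dynamics}, one has, uniformly in $q_0 \in \cM$ and for $\dt$ small,
\[
\mathbb{E}_{q_0}\left[\left|\Phi^\EM_\dt(q_0,W_\dt)-q_\dt\right|^2\right] \leq C\dt^3,
\qquad
\left|\mathbb{E}_{q_0}\left(\Phi^\EM_\dt(q_0,W_\dt)-q_\dt\right)\right| \leq C\dt^2,
\]
where $\Phi^\EM_\dt(q_0,W_\dt) = q_0 - \beta\dt\,\nabla V(q_0) + \sqrt{2}\,W_\dt$, the Gaussian increment being coupled to the driving Brownian motion through $\sqrt{2\dt}\,G = \sqrt{2}\,W_\dt$. (Alternatively these estimates follow from a stochastic Taylor expansion of $q_\dt$ in powers of $\dt^{1/2}$, the smoothness of~$V$ and compactness of~$\cM$ providing uniform bounds on all coefficients.) By the triangle inequality it then suffices to control the discrepancy introduced by the two correction terms, namely
\[
\mathcal{E}_\dt(q_0) := \Phi^\mod_\dt(q_0,W_\dt) - \Phi^\EM_\dt(q_0,W_\dt) = \dt^2\,F(q_0) + \sqrt{2}\left[\left(\Id+\dt\,\sigma(q_0)\right)^{-1/2}-\Id\right]W_\dt.
\]

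The point is that both corrections are subleading for strong order~$1$. Since $V$ is smooth and $\cM$ compact, $F$ and $\sigma$ are bounded on $\cM$; moreover, for $\dt < 1/\|\sigma\|_{L^\infty}$ the matrix $\Id+\dt\,\sigma(q_0)$ is symmetric positive definite, and expanding the matrix inverse square root yields the uniform bound $\left\|\left(\Id+\dt\,\sigma(q_0)\right)^{-1/2}-\Id\right\| \leq C\dt$. Consequently the deterministic term $\dt^2 F(q_0)$ is $\mathrm{O}(\dt^2)$, while the random term is centered, because $\left(\Id+\dt\,\sigma(q_0)\right)^{-1/2}$ is deterministic and $\mathbb{E}(W_\dt)=0$. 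Hence $\left|\mathbb{E}_{q_0}(\mathcal{E}_\dt)\right| = \dt^2\,|F(q_0)| \leq C\dt^2$, which combined with the Euler-Maruyama weak estimate gives the second (weak) bound of the lemma.

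For the mean square, the cross term between the two corrections vanishes since $W_\dt$ is centered and independent of the deterministic coefficients, so that
\[
\mathbb{E}_{q_0}\left[|\mathcal{E}_\dt|^2\right] = \dt^4|F(q_0)|^2 + 2\,\mathbb{E}_{q_0}\left[W_\dt^T\left[\left(\Id+\dt\,\sigma(q_0)\right)^{-1/2}-\Id\right]^2 W_\dt\right].
\]
Using $\mathbb{E}(W_\dt W_\dt^T) = \dt\,\Id$ and the $\mathrm{O}(\dt)$ bound on the bracket, the second term is $\leq C\dt^3$, whence $\mathbb{E}_{q_0}[|\mathcal{E}_\dt|^2] \leq C\dt^3$ uniformly in $q_0$. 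Minkowski's inequality then gives $\mathbb{E}_{q_0}[|\Phi^\mod_\dt(q_0,W_\dt)-q_\dt|^2]^{1/2} \leq C\dt^{3/2}$, i.e. the first (strong) bound. I do not expect a genuine obstacle here: the only algebraic subtlety of the modified scheme, namely the cancellation of the leading rejection-rate contributions, is reserved for Lemma~\ref{lem:improved_rejection_rate}, whereas at the level of the single-step accuracy the corrections are simply too small ($\mathrm{O}(\dt^2)$ deterministic plus $\mathrm{O}(\dt)$ times a centered increment) to affect the strong-order-one local error. The only care required is to keep all constants uniform in~$q_0$, which is guaranteed by the compactness of~$\cM$.
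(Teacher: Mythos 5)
Your proof is correct and follows essentially the same route as the paper: decompose $\Phi^\mod_\dt = \Phi^\EM_\dt + \dt^2 F + \sqrt{2\dt}\bigl[(\Id+\dt\,\sigma)^{-1/2}-\Id\bigr]G$, invoke \cite[Lemma~4.4]{BV09} for the Euler--Maruyama part, and check that the deterministic correction is $\mathrm{O}(\dt^2)$ while the centered Gaussian correction contributes $\mathrm{O}(\dt^3)$ to the mean square and nothing to the mean. The only (immaterial) difference is that you bound the operator norm of $(\Id+\dt\,\sigma(q_0))^{-1/2}-\Id$ directly by $C\dt$ via spectral calculus, whereas the paper controls the corresponding trace through a mean-value argument on $g(t)=\Tr([(\Id+t\sigma)^{-1/2}-\Id]^2)$; both yield the same estimate.
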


\begin{proof}
In order to rely on the results already obtained in~\cite{BV09}, we introduce the solution of the standard Euler-Maruyama scheme 
\[
\Phi^\EM_\dt(q,G) = q - \dt\nabla V(q) + \sqrt{2\dt}G. 
\]
Then, in view of the equality $\Phi^\mod_\dt(q,G) = \Phi^\EM_\dt(q,G) + \dt^2 F(q) + \sqrt{2\dt} \left[\left(\Id+\dt \, \sigma(q)\right)^{-1/2} - \Id \right] G$ and by a Cauchy-Schwarz inequality, it holds
\[
\begin{aligned}
\mathbb{E}_{q_0}\left[\left|\Phi^\mod_\dt(q_0,W_\dt) - q_{\dt}\right|^2\right] & \leq 3\mathbb{E}_{q_0}\left[\left|\Phi^\EM_\dt(q_0,W_\dt) - q_{\dt}\right|^2\right] + 6\dt \, \mathbb{E}_{q_0}\left[\left|\left[\left(\Id+\dt \, \sigma(q_0)\right)^{-1/2} - \Id \right]W_\dt \right|^2\right] \\
& \ \ + 3\dt^4 |F(q_0)|^2.
\end{aligned}
\]
It is proved in~\cite[Lemma 4.4]{BV09} that $\mathbb{E}_{q_0}\left[\left|\Phi^\EM_\dt(q,W_\dt) - q_{\dt}\right|^2\right] \leq C_\EM \dt^3$. Moreover,
\[
\mathbb{E}_{q_0}\left[\left|\left[\left(\Id+\dt \, \sigma(q_0)\right)^{-1/2} - \Id \right]W_\dt \right|^2\right] = \dt \Tr\left(\left[\big(\Id+\dt \, \sigma(q_0)\big)^{-1/2} - \Id \right]^2\right) := \dt\, g(\dt).
\]
Denoting by $\lambda_1(\sigma(q))$ the smallest eigenvalue of the real, symmetric matrix $\sigma(q)$, the function $t \mapsto g(t)$ is well defined for $|t| < 3/\kappa$, with $\kappa := \min_{\cM} \lambda_1(\sigma)$ when this quantity is negative and 0 otherwise. Note also that $g(0) = 0$. Therefore, $g(\dt) = \dt g'(\theta_\dt \dt)$ for some $\theta_\dt \in [0,1]$. Now, a simple computation shows that
\[
g'(t) = \Tr\left(\sigma(q_0) \big(\Id+t \, \sigma(q_0)\big)^{-3/2} \left[\big(\Id+t \, \sigma(q_0)\big)^{-1/2} - \Id \right]\right),
\]
Therefore, 
\[
\begin{aligned}
\left|g'(t)\right| & \leq \| \sigma(q_0) \|_{\rm F} \left( \left\| \big(\Id+t \, \sigma(q_0)\big)^{-3/2} \right\|_{\rm F} + \left\| \big(\Id+t \, \sigma(q_0)\big)^{-2} \right\|_{\rm F} \right) \\
& \leq K_d \frac{\| \sigma(q_0) \|_{\rm F}}{\left\| \Id+t \, \sigma(q_0) \right\|_{\rm F}^{3/2}} \left( 1 + \frac{1}{\left\| \Id+t \, \sigma(q_0) \right\|_{\rm F}^{1/2}} \right),
\end{aligned}
\]
for some constant $K_d > 0$ depending on the dimension~$d$. In the above inequalities, we introduced the Frobenius norm $\| M \|_{\rm F} = \sqrt{\Tr(M^T M)}$ and used the fact that $\Id + t \, \sigma$ is real, positive, symmetric, and that all matrix norms are equivalent, to obtain the second inequality by spectral calculus. Since $\Id + t \, \sigma \geq (1-\kappa t)\Id$ for $t \geq 0$, we conclude that 
\[
\left|g'(t)\right| \leq \widetilde{K}_d \frac{\| \sigma(q_0) \|_{\rm F}}{(1-\kappa t)^2},
\]
for some modified constant $\widetilde{K}_d > 0$. We finally deduce that
\begin{equation}
\label{eq:estimate_Brownian_increment_modified}
\mathbb{E}_{q_0}\left[\left|\left[\left(\Id+\dt \, \sigma(q_0)\right)^{-1/2} - \Id \right]W_\dt \right|^2\right] \leq C_1 \dt^2.
\end{equation}
In addition, $|F(q)|^2$ is bounded in view of the expression of~$F$, which gives the first estimate.

For the second estimate, we note that
\[
\begin{aligned}
& \mathbb{E}_{q_0}\left(\Phi_\dt^\mod(q_0,W_\dt) - q_{\dt}\right) = \mathbb{E}_{q_0}\left(\Phi_\dt^\EM(q_0,W_\dt) - q_{\dt}\right) + \dt^2 \, F(q_0). 
\end{aligned}
\]
The conclusion then follows from the estimates provided in~\cite[Lemma~4.4]{BV09}. 
\end{proof}

\subsubsection*{Single-step accuracy of the Metropolized scheme} 

Recall that $\Psi_\dt^\mod(q,G,U)$ is defined in~\eqref{eq:def_Psimod}.

\begin{lemma}[Local accuracy of MALA] \label{loc_ac}
There exists $C > 0$ and $\dt^* > 0$ such that, for any $0 < \dt \leq  \dt^*$ and any $q_0 \in \cM$, 
\[
\begin{aligned}
  \mathbb{E}_{q_0} \left[\left|\Psi^\mod_\dt(q_0,W_\dt,U) - q_{\dt}\right|^2\right] & \leq C\dt^3, \\
  \left|\mathbb{E}_{q_0}\left[\Psi^\mod_\dt(q_0,W_\dt,U) - q_{\dt}\right]\right| & \leq C\dt^2,
\end{aligned}
\]
where the expectation is over all realizations of the Brownian motion~$(W_t)_{0 \leq t \leq \dt}$ and of the random variable $U \sim \mathcal{U}[0,1]$.
\end{lemma}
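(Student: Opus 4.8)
The plan is to decompose a single Metropolized step into the proposal error, controlled by Lemma~\ref{1step_em}, and a contribution supported on the rejection event, controlled by the sharp rejection estimate of Lemma~\ref{lem:improved_rejection_rate}. Introducing the rejection indicator $R := \ind_{U > A^\mod_\dt(q_0,\Phi^\mod_\dt(q_0,W_\dt))}$ and recalling from~\eqref{eq:def_Psimod} that upon rejection the proposed move is undone, one has the exact identity
\[
\Psi^\mod_\dt(q_0,W_\dt,U) - q_\dt = \big(\Phi^\mod_\dt(q_0,W_\dt) - q_\dt\big) - R\,\big(\Phi^\mod_\dt(q_0,W_\dt) - q_0\big).
\]
Throughout I identify the Brownian increment with a rescaled standard Gaussian $G = W_\dt/\sqrt{\dt}$, so that $\Phi^\mod_\dt(q_0,W_\dt)$ is precisely the proposal $\Phi^\mod_\dt(q_0,G)$ appearing in Lemma~\ref{lem:improved_rejection_rate}, and I record that the proposal increment $\Phi^\mod_\dt(q_0,W_\dt)-q_0$ is affine in~$G$ with coefficients of size $\mathrm{O}(\sqrt\dt)$ together with drift terms of size $\mathrm{O}(\dt)$; by smoothness of~$V$ and compactness of~$\cM$ this yields the moment bounds $\EE_G[|\Phi^\mod_\dt(q_0,W_\dt)-q_0|^2] \le C\dt$ and $\EE_G[|\Phi^\mod_\dt(q_0,W_\dt)-q_0|^4] \le C\dt^2$, uniformly in $q_0 \in \cM$.

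For the mean-square estimate I would apply $|a+b|^2 \le 2|a|^2 + 2|b|^2$ to the identity above. The first term is the single-step error of the un-Metropolized modified Euler scheme, bounded by $C\dt^3$ by the first assertion of Lemma~\ref{1step_em}. For the rejection term I integrate out~$U$ by conditioning on~$G$: since $\EE_U[R \mid G] = 1 - A^\mod_\dt(q_0,\Phi^\mod_\dt(q_0,W_\dt))$, this contribution equals $\EE_G\big[(1-A^\mod_\dt)\,|\Phi^\mod_\dt(q_0,W_\dt)-q_0|^2\big]$. A Cauchy--Schwarz inequality in~$G$ bounds it by $\EE_G[(1-A^\mod_\dt)^2]^{1/2}\,\EE_G[|\Phi^\mod_\dt(q_0,W_\dt)-q_0|^4]^{1/2}$, which is $\mathrm{O}(\dt^{5/2}) \cdot \mathrm{O}(\dt) = \mathrm{O}(\dt^{7/2})$ using Lemma~\ref{lem:improved_rejection_rate} with $\ell = 1$ together with the fourth-moment bound above. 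Since $\dt^{7/2}$ is dominated by $\dt^3$ for small~$\dt$, the first estimate follows.

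For the mean I would take expectations directly in the same identity. The proposal contribution $\EE_{q_0}[\Phi^\mod_\dt(q_0,W_\dt)-q_\dt]$ is bounded by $C\dt^2$ by the second assertion of Lemma~\ref{1step_em}. Conditioning on~$G$ as above, the rejection contribution equals $\EE_G[(1-A^\mod_\dt)(\Phi^\mod_\dt(q_0,W_\dt)-q_0)]$, and Cauchy--Schwarz bounds its norm by $\EE_G[(1-A^\mod_\dt)^2]^{1/2}\,\EE_G[|\Phi^\mod_\dt(q_0,W_\dt)-q_0|^2]^{1/2} = \mathrm{O}(\dt^{5/2})\cdot\mathrm{O}(\dt^{1/2}) = \mathrm{O}(\dt^3)$, which is again dominated by $\dt^2$. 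Combining the two contributions gives the second estimate.

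The only genuinely delicate point is the uniformity in~$q_0$ of both the moment bounds on the proposal increment and the rejection estimate; this is exactly where compactness of~$\cM$ and smoothness of~$V$ enter, ensuring that the constants and the polynomial power~$p$ in the bound $|\cR_k(q,G)| \le K(1+|G|^p)$ underlying Lemma~\ref{lem:improved_rejection_rate}, as well as the $L^\infty$ bounds on $\nabla V$, $F$ and~$\sigma$, may be taken independent of~$q_0$. Everything else is routine bookkeeping; the key mechanism is that the improved $\dt^{5/2}$ decay of the $L^2$ rejection probability pushes the entire rejection contribution to orders ($\dt^{7/2}$ and $\dt^3$) strictly below the targeted local accuracy.
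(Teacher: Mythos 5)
Your proof is correct and follows essentially the same route as the paper: split the single step into the accepted-proposal error, handled by Lemma~\ref{1step_em}, and a rejection contribution killed by Cauchy--Schwarz together with the $\dt^{5}$ bound on $\EE_G\big[(1-A^\mod_\dt)^2\big]$ from Lemma~\ref{lem:improved_rejection_rate}. The only (harmless) difference is that your rejection term is weighted by the proposal increment $\Phi^\mod_\dt(q_0,W_\dt)-q_0$, whose moments you bound directly from the explicit form of the proposal, whereas the paper weights it by $q_0-q_\dt$ and invokes the SDE moment bound $\EE_{q_0}\big[|q_0-q_\dt|^{2\ell}\big]\leq K_\ell\dt^\ell$; both yield the same orders $\dt^{7/2}$ and $\dt^{3}$ for the rejection contributions, which are dominated by the targeted $\dt^{3}$ and $\dt^{2}$.
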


\begin{proof}
By definition of the scheme, we have
\[
\begin{aligned}
\mathbb{E}_{q_0}\left[\left|\Psi^\mod_\dt(q_0,W_\dt,U) - q_{\dt}\right|^2\right] 
& = \mathbb{E}_{q_0}\left[A^\mod_\dt\left(q_0,\Phi_\dt^\mod(q_0,W_\dt)\right)\left|\Phi^\mod_\dt(q_0,W_\dt) - q_{\dt}\right|^2\right] \\
& \ \ + \mathbb{E}_{q_0}\left[\left(1-A^\mod_\dt\left(q_0,\Phi_\dt^\mod(q_0,W_\dt)\right)\right)\left|q_0 - q_{\dt}\right|^2\right].
\end{aligned}
\]
The first term on the right-hand side is bounded using the estimates provided in Lemma~\ref{1step_em} and the inequality $0 \leq A_\dt^\mod(q,q') \leq 1$ for any $q,q' \in \R^d$. For the second term, a Cauchy-Schwarz inequality gives
\[
\begin{aligned}
& \mathbb{E}_{q_0}\left[\left(1-A^\mod_\dt\left(q_0,\Phi_\dt^\mod(q_0,W_\dt)\right)\right)\left|q_0 - q_{\dt}\right|^2\right] \\
& \qquad \qquad \leq \mathbb{E}_{q_0}\left[\left|q_0 - q_{\dt}\right|^4\right]^{1/2} \mathbb{E}_{q_0}\left[\left(1-A^\mod_\dt\left(q_0,\Phi_\dt^\mod(q_0,W_\dt)\right)\right)^2 \right]^{1/2}.
\end{aligned}
\]
From~\cite[Lemma~4.2]{BV09}, we know that, for any integer $\ell \geq 1$, there exists a constant $K_\ell > 0$ such that
\begin{equation}
\label{eq:estimate_Lemma4.2_BV09}
\mathbb{E}_{q_0}\left[\left|q_0 - q_{\dt}\right|^{2\ell}\right] \leq K_\ell \dt^\ell.
\end{equation}
The conclusion then follows by the above inequality in the case $\ell = 2$ and by using the estimates stated in Lemma~\ref{lem:improved_rejection_rate}. 

The proof of the second estimate is based on similar arguments. First,
\[
\begin{aligned}
\left| \mathbb{E}_{q_0}\left[\Psi^\mod_\dt(q_0,W_\dt,U) - q_{\dt}\right] \right|
& \leq \left| \mathbb{E}_{q_0}\left[A^\mod_\dt\left(q_0,\Phi_\dt^\mod(q_0,W_\dt)\right)\left(\Phi^\mod_\dt(q_0,W_\dt) - q_{\dt}\right)\right] \right| \\
& \ \ + \left| \mathbb{E}_{q_0}\left[\left(1-A^\mod_\dt\left(q_0,\Phi_\dt^\mod(q_0,W_\dt)\right)\right)\left(q_0 - q_{\dt}\right)\right]\right|
\end{aligned}
\]
The second term can be bounded by a Cauchy-Schwarz inequality as
\[
\begin{aligned}
& \left| \mathbb{E}_{q_0}\left[\left(1-A^\mod_\dt\left(q_0,\Phi_\dt^\mod(q_0,W_\dt)\right)\right)\left(q_0 - q_{\dt}\right)\right]\right| \\
& \qquad \leq \mathbb{E}_{q_0}\left[\left(1-A^\mod_\dt\left(q_0,\Phi_\dt^\mod(q_0,W_\dt)\right)\right)^2\right]^{1/2} \mathbb{E}_{q_0}\left[\left|q_0 - q_{\dt}\right|^2\right]^{1/2},
\end{aligned}
\]
together with~\eqref{eq:estimate_Lemma4.2_BV09} in the case $\ell = 1$ and Lemma~\ref{lem:improved_rejection_rate}. To estimate the first term, we write
\[
\begin{aligned}
& \mathbb{E}_{q_0}\left[A^\mod_\dt\left(q_0,\Phi_\dt^\mod(q_0,W_\dt)\right)\left(\Phi^\mod_\dt(q_0,W_\dt) - q_{\dt}\right)\right] = \mathbb{E}_{q_0}\left[\Phi^\mod_\dt(q_0,W_\dt) - q_{\dt}\right] \\
& \qquad \qquad - \mathbb{E}_{q_0}\left[\left(1 - A^\mod_\dt\left(q_0,\Phi_\dt^\mod(q_0,W_\dt)\right)\right)\left(\Phi^\mod_\dt(q_0,W_\dt) - q_{\dt}\right)\right],
\end{aligned}
\]
and use Lemma~\ref{1step_em} to bound the first term on the right-hand side and a Cauchy-Schwarz inequality together with Lemmas~\ref{lem:improved_rejection_rate} and~\ref{1step_em} for the second. 
\end{proof}

\subsubsection*{Global accuracy of the Metropolized scheme}

We now have all the tools we need to prove Theorem~\ref{thm:improved_strong}. For $k = 0,..,\lfloor T/\dt \rfloor$, we introduce $t_k := k\dt$ and
\[
\varepsilon_k := \mathbb{E}_{q_0}\left[\left|q^k - q_{t_k}\right|^2\right],
\]
where we recall that the expectation is over all realizations of the Brownian motion~$(W_t)_{0 \leq t \leq T}$ for a given initial position~$q_0 = q^0$. The Gaussian increments used in the Metropolis scheme are consistent with the realization of the Brownian motion used to integrate the continuous dynamics. More precisely, starting from $q^0 = q_0$, we consider, for $k \geq 1$, 
\[
q^k = \Psi_\dt^\mod\left(q^{k-1},W_{t_{k}}-W_{t_{k-1}},U^k\right).
\]
We claim that there exist $K_1,K_2 > 0$ and $\dt^* > 0$ such that, for any $0 < \dt \leq \dt^*$,
\begin{equation} 
  \label{eq2}
  \varepsilon_{k+1} \leq (1 + K_1\dt)\varepsilon_k + K_2\dt^3.
\end{equation}
Theorem~\ref{thm:improved_strong} then follows by a discrete Gronwall inequality. 

Let us now prove~\eqref{eq2}. We denote by $Q_{t,s}(q)$ be the value at time $t$ of the solution of the SDE~\eqref{eq:dynamics} starting at time $s$ from $q$, which depends on the realization of the underlying Brownian motion. Let $\mathcal{F}_k$ be the sigma-algebra of events up to the time~$t_k$. It holds
\begin{equation}
  \label{eq1}
  \begin{aligned} 
    \left|q^{k+1} - q_{t_{k+1}}\right|^2 & = \left|q^{k+1} - Q_{t_{k+1},t_k}(q^k) + Q_{t_{k+1},t_k}(q^k) - Q_{t_{k+1},t_k}(q_{t_k})\right|^2 \\
    & = \left|q^{k+1} - Q_{t_{k+1},t_k}(q^k)\right|^2 + \left|Q_{t_{k+1},t_k}(q^k) - Q_{t_{k+1},t_k}(q_{t_k})\right|^2 \\
    & \ \ + 2\left( q^{k+1} - Q_{t_{k+1},t_k}(q^k) \right)^T\left( Q_{t_{k+1},t_k}(q^k) - Q_{t_{k+1},t_k}(q_{t_k}) \right). 
  \end{aligned}
\end{equation}
Lemma~\ref{loc_ac} implies that 
\[
\mathbb{E}\left[\left. \left|q^{k+1} - Q_{t_{k+1},t_k}(q^k)\right|^2 \right|\mathcal{F}_k\right] \leq C\dt^3,
\]
so that
\[
\mathbb{E}_{q_0}\left[\left|q^{k+1} - Q_{t_{k+1},t_k}(q^k)\right|^2\right] 
\leq C\dt^3.
\]
Similarly, using~\cite[Lemma~4.3]{BV09}, there exists $\widetilde{K}_1 > 0$ such that 
\[
\mathbb{E}_{q_0} \left[\left|Q_{t_{k+1},t_k}(q^k) - Q_{t_{k+1},t_k}(q_{t_k})\right|^2\right] \leq \left(1 + \widetilde{K}_1\dt\right)\varepsilon_k.
\]
It therefore remains to bound the third term on the right-hand side of~\eqref{eq1}. Setting 
\[
\Delta_k := Q_{t_{k+1},t_k}(q^k) - Q_{t_{k+1},t_k}(q_{t_k}) - \Big( q^k - q_{t_k} \Big),
\]
we can rewrite the term under consideration as
\[
\left( q^{k+1} - Q_{t_{k+1},t_k}(q^k) \right)^T \left( q^k - q_{t_k} \right) + \left( q^{k+1} - Q_{t_{k+1},t_k}(q^k) \right)^T \Delta_k. 
\]
Using a Cauchy-Schwarz inequality and Lemma~\ref{loc_ac}, 
\[
\begin{aligned}
\EE_{q_0}\left[ \left( q^{k+1} - Q_{t_{k+1},t_k}(q^k) \right)^T \left( q^k - q_{t_k} \right) \right] & = \mathbb{E}_{q_0} \left[\left( \mathbb{E}\left[ \left. q^{k+1} - Q_{t_{k+1},t_k}(q^k) \right| \mathcal{F}_k \right]\right)^T \left( q^k - q_{t_k} \right) \right] \\
&\leq \mathbb{E}_{q_0} \left[ \left| \mathbb{E}\left[ \left. q^{k+1} - Q_{t_{k+1},t_k}(q^k)\right|\mathcal{F}_k\right] \right|^2\right]^{1/2}\varepsilon_k^{1/2} \notag \\
&\leq C\dt^{2}\varepsilon_k^{1/2} \leq \frac{C\dt}{2} (\varepsilon_k + \dt^2).
\end{aligned}
\]
Similarly, 
\[
\begin{aligned}
\mathbb{E}_{q_0} \left[ \left( q^{k+1} - Q_{t_{k+1},t_k}(q^k)\right)^T \Delta_k \right] & \leq \mathbb{E}_{q_0}\left( \mathbb{E}\left[ \left. \left| q^{k+1} - Q_{t_{k+1},t_k}(q^k)\right|^2 \right| \mathcal{F}_k \right]\right)^{1/2}\mathbb{E}_{q_0}\left(\mathbb{E}\left[\left. |\Delta_k|^2 \right|\mathcal{F}_k\right]\right)^{1/2} \\
&\leq C\dt^{3/2} \, \mathbb{E}_{q_0}\left(\mathbb{E}\left[\left. |\Delta_k|^2 \right|\mathcal{F}_k\right]\right)^{1/2}. 
\end{aligned}
\]
According to~\cite[Lemma~4.3]{BV09}, 
\[
\mathbb{E}\left[\left. |\Delta_k|^2 \right|\mathcal{F}_k\right] \leq K\dt^2 \left|q^k-q_{t_k}\right|,
\]
so that, by a Cauchy-Schwarz inequality,
\[
\mathbb{E}_{q_0}\left(\mathbb{E}\left[\left. |\Delta_k|^2 \right|\mathcal{F}_k\right]\right) \leq K\dt^2 \, \EE_{q_0}\left|q^k-q_{t_k}\right| \leq  K\dt^2 \, \varepsilon_k^{1/2}.
\]
Therefore, there exists $\widetilde{C} > 0$ such that
\[
\mathbb{E}_{q_0} \left[ \left( q^{k+1} - Q_{t_{k+1},t_k}(q^k)\right)^T \Delta_k \right]
\leq \widetilde{C}\dt^{5/2}\varepsilon_k^{1/4} \leq \frac{3\widetilde{C}}{4}(\dt\,\varepsilon_k + \dt^3),
\]
which concludes the proof of~\eqref{eq2}.

\subsection{Proof of Lemma~\ref{lem:weak_type_expansion}}
\label{sec:proof:lem:weak_type_expansion}

\paragraph{Midpoint proposal with Barker rule.}
We first consider the midpoint proposal~\eqref{eq:proposal_weak_2nd_order} together with a Barker rule. We also write the proof for a general drift $F_\dt = -\nabla V + \dt \, \widetilde{F}$ in order to prove the statements of Remark~\ref{rmk:add_drift}. We start by rewriting~\eqref{eq:def_alpha_Barker} as
\[
\begin{aligned}
\alpha_\dt(q,q') & = V\left(\frac{q+q'}{2} + \frac12 (q'-q)\right) - V\left(\frac{q+q'}{2}- \frac12 (q'-q)\right) + F_\dt\left(\frac{q+q'}{2}\right) \cdot (q'-q) \\
& = \left[ \nabla V \left(\frac{q+q'}{2}\right) + F_\dt\left(\frac{q+q'}{2}\right) \right] \cdot (q'-q) + \frac{1}{24} \left[D^3V\left(\frac{q+q'}{2}\right) \right](q'-q)^{\otimes 3} + \mathrm{O}\left(| q'-q|^5\right) \\
& = \dt\widetilde{F}\left(\frac{q+q'}{2}\right) \cdot (q'-q) + \frac{1}{24} \left[D^3V\left(\frac{q+q'}{2}\right) \right](q'-q)^{\otimes 3} + \mathrm{O}\left(| q'-q|^5\right).
\end{aligned}
\]
Note that the remainder is of order $\mathrm{O}\left(| q'-q|^5\right)$ and not $\mathrm{O}\left(| q'-q|^4\right)$. Next,
\begin{equation}
  \label{eq:expansion_Phi_dt}
  \begin{aligned}
    \Phi_\dt(q,G) = q & + \sqrt{2\dt} \, G - \dt \, \nabla V(q) -\frac{\sqrt2}{2} \, \dt^{3/2} \nabla^2V(q)\cdot G \\
    & + \dt^2 \left(-\frac14 D^3V(q) \cdot G^{\otimes 2} + \frac12 \nabla^2 V(q) \cdot \nabla V(q) + \widetilde{F}(q) \right) + \dt^{5/2} \, Q_\dt(q^n,G^n).
  \end{aligned}
\end{equation}
This leads to
\[
\begin{aligned}
\alpha_\dt\Big(q,\Phi_\dt(q,G)\Big) & = \dt\,\widetilde{F}\left(\frac{q+\Phi_\dt(q,G)}{2}\right) \cdot \left(\sqrt{2\dt} G - \dt\,\nabla V(q) \right) \\
& \ \ + \frac{1}{24} D^3V\left(\frac{q+\Phi_\dt(q,G)}{2}\right) \cdot \left( (2\dt)^{3/2} G^{\otimes 3} - 6 \dt^2 G^{\otimes 2} \otimes \nabla V(q)\right) + \dt^{5/2} \widehat{\alpha}_\dt(q,G) \\
& =  \dt^{3/2} \xi_{3/2}(q,G) + \dt^2 \xi_2(q,G) + \dt^{5/2} \xi_{5/2}(q,G) + \dt^{3} \widetilde{\alpha}_\dt(q,G),
\end{aligned}
\] 
with
\[
\begin{aligned}
\xi_{3/2}(q,G) & = \sqrt{2} \left(\widetilde{F}(q) \cdot G + \frac{1}{12} D^3V(q) \cdot \left(G\right)^{\otimes 3}\right), \\
\xi_2(q,G) & = -\widetilde{F}(q)\cdot \nabla V(q) + D\widetilde{F}(q)\cdot \left(G\right)^{\otimes 2} - \frac14 D^3V(q) \cdot \left(\left(G\right)^{\otimes 2} \otimes \nabla V(q) \right) + \frac{1}{12} D^4V(q) \cdot \left(G\right)^{\otimes 4}, \\
\end{aligned}
\]
and where $\xi_{5/2}(q,G)$ involves only odd powers of~$G$ and satisfies an estimate of the form 
\begin{equation}
  \label{eq:typical_remainder}
  |\xi_{5/2}(q,G)| \leq L (1+|G|^{p})
\end{equation}
for some integer~$p$ and some constant $L>0$. A similar bound is satisfied by the remainder~$\widetilde{\alpha}_\dt(q,G)$. Using
\begin{equation}
  \label{eq:expansion_Barker_ratio}
  \frac{\rme^{-a}}{1+\rme^{-a}} = \frac12 - \frac{a}{4} + \mathrm{O}(a^3),
\end{equation}
it follows that 
\begin{equation}
  \label{eq:scaling_rejection_rate_Barker}
  A_\dt^{\rm Barker}\Big(q,\Phi_\dt(q,G)\Big) = \frac12 - \frac{\dt^{3/2}}{4} \xi_{3/2}(q,G) - \frac{\dt^2}{4} \xi_2(q,G) - \frac{\dt^{5/2}}{4} \xi_{5/2}(q,G) + \dt^3 \widetilde{A}_\dt(q,G),
\end{equation}
where the remainder $\widetilde{A}_\dt(q,G)$ satisfies an estimate of the form~\eqref{eq:typical_remainder} uniformly in~$\dt$.

\begin{remark}[Average rejection rate]
Note that $\EE_G(\xi_{3/2}(q,G)) = \EE_G(\xi_{5/2}(q,G)) = 0$, while 
\[
\overline{\xi}_2(q) = \EE_G\left[\xi_{2}(q,G)\right] = -\widetilde{F}(q)\cdot \nabla V(q) + \mathrm{div}\left(\widetilde{F}\right)(q) - \frac14 \nabla (\Delta V) \cdot \nabla V(q) + \frac{1}{4} \Delta^2 V(q)
\]
is such that 
\[
\int_\cM \overline{\xi}_2 \, d\mu = 0,
\]
since
\begin{equation}
\label{eq:identity_Delta2}
\int_\cM \Delta^2 V \, d\mu = \int_\cM \nabla (\Delta V) \cdot \nabla V \, d\mu.
\end{equation}
Therefore, the average acceptance rate at equilibrium is 
\begin{equation}
\label{eq:avg_rejection_rate_Barker}
\EE_\mu \EE_G \left[A_\dt^{\rm Barker}\Big(q,\Phi_\dt(q,G)\Big)\right] = \frac12 + \mathrm{O}\left(\dt^3\right),
\end{equation}
where the expectation is over $q \sim \mu$ and all realizations of the Gaussian random variable~$G$. 
\end{remark}

On the other hand, the Taylor expansion
\[
\psi\left(\Phi_\dt(q,G)\right) - \psi(q) = \nabla \psi(q) \cdot \left(\Phi_\dt(q,G)-q\right) + \frac12 D^2 \psi(q) \cdot\left(\Phi_\dt(q,G)-q\right)^{\otimes 2} + \frac16 D^3 \psi(q) \cdot \left(\Phi_\dt(q,G)-q\right)^{\otimes 3} + \dots
\]
leads to
\[
\begin{aligned}
& \psi\left(\Phi_\dt(q,G)\right) - \psi(q) 
= \nabla \psi(q) \cdot \left(\sqrt{2\dt} G - \dt \, \nabla V\left(\frac{q+\Phi_\dt(q,G)}{2}\right) + \dt^2 \widetilde{F}(q) \right) \\
& \quad + \frac12 \, D^2\psi(q) \cdot \left(2\dt\,G^{\otimes 2} - 2^{3/2} \dt^{3/2} G \otimes \nabla V\left(\frac{q+\Phi_\dt(q,G)}{2}\right) + \dt^2 \nabla V(q) \otimes \nabla V(q) \right)\\
& \quad + \frac16 \, D^3\psi(q) \cdot \left( 2^{3/2} \dt^{3/2} G^{\otimes 3} - 6 \dt^2 \nabla V(q) \otimes G^{\otimes 2}\right) \\
& \quad + \frac{\dt^2}{6} \, D^4\psi(q) \cdot G^{\otimes 4} + \dt^{5/2} \widetilde{\Psi}_{5/2}(q,G) + \dt^{3} \widehat{\Psi}_\dt(q,G),
\end{aligned}
\]
with $\widetilde{\Psi}_{5/2}(q,G)$ involving only odd powers of~$G$. Upon further expanding 
\[
\begin{aligned}
& \nabla V\left(\frac{q+\Phi_\dt(q,G)}{2}\right) = \nabla V\left(q\right) + \frac12 \, D^2 V(q) \cdot\left(\Phi_\dt(q,G)-q\right) + \frac18 \, D^3 V(q) \cdot \left(\Phi_\dt(q,G)-q\right)^{\otimes 2} + \dots \\
& \quad = \nabla V\left(q\right) + \frac12 \, D^2 V(q) \cdot \left(\sqrt{2\dt} \, G - \dt \, \nabla V(q) \right) + \frac{\dt}{4} \, D^3V(q) \cdot G^{\otimes 2} + \dt^{3/2} \widetilde{V}_\dt(q,G),
\end{aligned}
\]
it follows
\begin{equation}
\label{eq:difference_psi_implicit}
\begin{aligned}
& \psi\left(\Phi_\dt(q,G)\right) - \psi(q) = \sqrt{2\dt} \, \nabla \psi(q) \cdot G \\
& \quad + \dt \left(-\nabla V(q) \cdot \nabla \psi(q) + D^2\psi(q) \cdot G^{\otimes 2} \right) \\
& \quad + \dt^{3/2} \left(-\frac{\sqrt{2}}{2} D^2V(q) \cdot(G \otimes \nabla \psi(q)) - \sqrt{2}D^2\psi(q) \cdot (G \otimes \nabla V(q)) + \frac{\sqrt{2}}{3} \, D^3\psi(q) \cdot G^{\otimes 3} \right)\\
& \quad + \dt^{2} T_{4,\psi}(q,G) + \dt^{5/2} \Psi_{5/2}(q,G) + \dt^{3} \widetilde{\Psi}_\dt(q,G),
\end{aligned}
\end{equation}
with $\Psi_{5/2}(q,G)$ involving only odd powers of~$G$ and
\[
\begin{aligned}
T_{4,\psi}(q,G) & = \widetilde{F}(q) \cdot \nabla \psi(q) +\frac12 D^2 V(q) \cdot \left(\nabla V(q)\otimes \nabla \psi(q)\right)- \frac14 D^3V(q) \cdot \left(\nabla \psi(q) \otimes G^{\otimes 2}\right) \\
& \ \  - D^2\psi(q) \cdot \left(G \otimes \left(D^2V(q)\cdot G\right) \right) + \frac12 \, D^2\psi(q)\cdot \left(\nabla V(q) \otimes \nabla V(q)\right) \\
& \ \ - D^3\psi(q) \cdot \left( \nabla V(q) \otimes G^{\otimes 2} \right) + \frac{1}{6} \, D^4\psi(q) \cdot G^{\otimes 4}.
\end{aligned}
\]
A simple computation shows that $\overline{T_{4,\psi}}(q) = \EE_G[T_{4,\psi}(q,G)]$ is equal to
\[
\overline{T_{4,\psi}} = \widetilde{F} \cdot \nabla \psi + \frac12 (\nabla V)^T \nabla^2 V (\nabla \psi) - \frac14 \nabla (\Delta V)\cdot \nabla \psi - \nabla^2 V : \nabla^2 \psi  + \frac12 (\nabla V)^T \nabla^2 \psi (\nabla V) - \nabla V \cdot \nabla (\Delta \psi) + \frac12 \Delta^2 \psi.
\]
Recalling $\cL = -\nabla V\cdot \nabla + \Delta$ and using the expression of $\cL^2$ provided in~\cite[Section~4.9]{LMS13},
\[
\overline{T_{4,\psi}} = \frac12 \cL^2\psi + \left(\widetilde{F} + \frac14 \nabla(\Delta V) \right ) \cdot\nabla \psi.
\]
Therefore, irrespectively of the choice of $\widetilde{F}$, and in view of~\eqref{eq:scaling_rejection_rate_Barker} and the definition of the discrete generator~\eqref{eq:def_discrete_generator},
\[
\begin{aligned}
\frac{P_\dt-\Id}{\dt}\psi(q) & = \frac12 \EE_G\left[\frac{\psi(\Phi_\dt(q,G))-\psi(q)}{\dt}\right] - \frac{\sqrt{2}\dt}{4} \, \EE_G\left[\xi_{3/2}(q,G)\left(\nabla \psi(q)\cdot G\right)\right] + \dt^{2} \widehat{r}_{\psi,\dt}(q) \\
& = \frac12 \left[ \cL \psi(q) + \dt\left(\frac12 \cL^2\psi(q) + \left(\widetilde{F}+\frac14 \nabla(\Delta V)\right)\cdot \nabla \psi(q) \right)\right] \\
& \ \ - \frac{\dt}{2} \left[\left(\widetilde{F} + \frac14 \nabla(\Delta V) \right ) \cdot\nabla \psi\right](q) + \dt^{2} r_{\psi,\dt}(q) \\
& = \frac12 (\cL \psi)(q) + \frac{\dt}{4} (\cL^2\psi)(q) + \dt^{2} r_{\psi,\dt}(q),
\end{aligned}
\]
which gives the claimed result~\eqref{eq:weak_type_expansion} for the midpoint proposal~\eqref{eq:proposal_weak_2nd_order} when a Barker rule is used. 

\paragraph{Midpoint proposal with Metropolis rule.}
By distinguishing the cases $x \leq 0$ and $x \geq 0$,
\[
x_+ - \frac{x_+^2}{2} \leq 1 - \min(1,\rme^{-x}) \leq x_+, \qquad x_+ = \max(0,x).
\]
Therefore, when the proposal~\eqref{eq:proposal_weak_2nd_order} is considered with a modified drift $-\nabla V + \dt \, \widetilde{F}$ in conjunction with a Metropolis-Hastings algorithm, the rejection rate can be expanded as
  \begin{equation}
    \label{eq:rate_Metropolis_midpoint}
  A^{\rm MH}_\dt(q^n,\widetilde{q}^{n+1}) = 1 - \dt^{3/2} \min\left(0,\xi_{3/2}(q^n,G^n)\right) + \dt^{2} \widetilde{A}^{\rm MH}_\dt(q^n,G^n), 
  \end{equation}
  with a remainder satisfying an inequality similar to~\eqref{eq:typical_remainder}. However, the remainder has a non-trivial average with respect to~$G$ as $\dt \to 0$, in contrast to the case where a Barker rule is used. Therefore, with computations similar to the ones of~\cite[Section~5.2]{FHS14},
  \[
  \frac{P_\dt -  \Id}{\dt} \, \psi = \cL \psi + \dt\left(\frac12 \cL^2 + \left(\widetilde{F}+\frac14 \nabla(\Delta V) + f_{3/2} \right)\cdot \nabla \psi \right)+ \dt^{3/2} r_{\psi,\dt},
  \]
  where 
  \[
  f_{3/2}(q) = -\sqrt{2} \int_{\RR^{d}} \min\left(0,\xi_{3/2}(q,g)\right)g \, \frac{\rme^{-g^2/2}}{(2\pi)^{d/2}} \, dg. 
  \]
  Let us insist on the fact that the remainder now is of order~$\dt^{3/2}$ rather than~$\dt^2$ as in the Barker case. As in~\cite[Section~5]{BDV13}, it is possible to obtain a simpler expression of $f_{3/2}$ in view of the symmetry property 
  \begin{equation}
    \label{eq:symmetry_xi_3/2}
    \xi_{3/2}(q,g)g = -\xi_{3/2}(q,-g)g. 
  \end{equation}
  Introducing $\Omega(q) = \{ g \in \RR^d \, | \, \xi_{3/2}(q,g) \geq 0\}$, 
  \[
  \begin{aligned}
    f_{3/2}(q) & = -\sqrt{2} \int_{\Omega(q)} \xi_{3/2}(q,g)\,g^T \nabla \psi \, \frac{\rme^{-g^2/2}}{(2\pi)^{d/2}} \, dg \\
  & = -\frac{\sqrt{2}}{2} \int_{\RR^d} \xi_{3/2}(q,g)\,g^T \nabla \psi \, \frac{\rme^{-g^2/2}}{(2\pi)^{d/2}} \, dg = -\left( \widetilde{F}(q) + \frac14 \nabla (\Delta V)(q) \right).
  \end{aligned}
  \]
  Therefore, the Metropolis algorithm based on the midpoint proposal is of weak order~2, but with a fractional remainder of order $\dt^{5/2}$ instead of $\dt^3$ when a Barker rule is used. 

\begin{remark}
  \label{rmk:other_rules}
  There are other acceptance/rejection rules than~\eqref{eq:acceptance_rate} ensuring that the canonical measure is invariant. We write to this end the Metropolis acceptance rate in \eqref{eq:acceptance_rate} as $A(r) = \min(1,r)$, and the Barker rate as $A(r) = r/(1+r)$. The invariance of the canonical measure is ensured by the fact that $0 \leq A(r) \leq 1$ and $r A(1/r) = A(r)$, see for instance~\cite[Section~2.1.2.2]{LRS10}. More general choices can be considered, such as (see~\cite{Gidas95}) 
  \[
  A_\gamma(r) = \frac{r}{1+r} \left(1 + 2 \left[ \frac12 \min\left(r,\frac1r\right) \right]^\gamma \right), 
  \]
  with $\gamma \geq 1$. The Metropolis rule corresponds to $\gamma = 1$, while the Barker rule is formally recovered for $\gamma = +\infty$. A key point however in our argument is that $A(r)$ is an entire function of $r$, which allows to eliminate terms with fractional powers of the timestep by averaging over the Gaussian increments. This is not possible for acceptance/rejection criteria based on $A_\gamma$ for $\gamma < +\infty$ because of the minimum over $r$ and $1/r$.
\end{remark}

\paragraph{HMC proposal with Barker rule.}
We now set $\widetilde{F} = 0$ since the previous computations show that $\widetilde{F}$ does not change the weak type properties of the algorithm. For the HMC proposal~\eqref{eq:proposal_HMC}, the expansion~\eqref{eq:expansion_Phi_dt} is changed as
\begin{equation}
  \label{eq:expansion_Phi_dt_HMC}
  \begin{aligned}
    \Phi_\dt(q,G) = q & + \sqrt{2\dt} \, G - \dt \, \nabla V(q) -\frac{\sqrt2}{2} \, \dt^{3/2} \nabla^2V(q)\cdot G \\
    & -\frac{\dt^2}{4} D^3V(q) \cdot G^{\otimes 2} + \dt^{5/2} \, Q_\dt(q^n,G^n).
  \end{aligned}
\end{equation}
Note that only the term in $\dt^2$ changes. Therefore, \eqref{eq:difference_psi_implicit} holds upon changing $T_{4,\psi}(q,G)$ to
\[
\begin{aligned}
T_{4,\psi}(q,G) & = - \frac14 D^3V(q) \cdot \left(\nabla \psi(q) \otimes G^{\otimes 2}\right) \\
& \ \  - D^2\psi(q) \cdot \left(G \otimes \left(D^2V(q)\cdot G\right) \right) + \frac12 \, D^2\psi(q)\cdot \left(\nabla V(q) \otimes \nabla V(q)\right) \\
& \ \ - D^3\psi(q) \cdot \left( \nabla V(q) \otimes G^{\otimes 2} \right) + \frac{1}{6} \, D^4\psi(q) \cdot G^{\otimes 4}.
\end{aligned}
\]
The rate $\alpha_\dt(q,\Phi_\dt(q,G))$ defined in~\eqref{eq:def_alpha_Barker_HMC} is computed by replacing $\psi$ by~$V$ in \eqref{eq:difference_psi_implicit} (with the new definition of~$T_{4,V}$):
\[
\begin{aligned}
V(\Phi_\dt(q,G))-V(q) & = \sqrt{2\dt} \, \nabla V(q) \cdot G + \dt \left(-\left|\nabla V(q)\right|^2 + D^2 V(q) \cdot G^{\otimes 2} \right) \\
& \quad + \dt^{3/2} \left(-\frac{3\sqrt{2}}{2} D^2V(q) \cdot(G \otimes \nabla V(q)) + \frac{\sqrt{2}}{3} \, D^3V(q) \cdot G^{\otimes 3} \right)\\
& \quad + \dt^{2} T_{4,V}(q,G) + \dt^{5/2} \mathcal{V}_{5/2}(q,G) + \dt^{3} \widetilde{\mathcal{V}}_\dt(q,G),
\end{aligned}
\]
and expanding 
\[
\nabla V\left(q+\frac{\sqrt{2\dt}}{2}\,G\right) = \nabla V(q) + \frac{\sqrt{2\dt}}{2} \, D^2 V(q)\cdot G + \frac{\dt}{4} D^3 V(q) \cdot G^{\otimes 2} + \frac{\sqrt{2} \dt^{3/2}}{24} D^4 V(q) \cdot G^{\otimes 3} + \dots
\]
so that 
\[
\begin{aligned}
& \frac{1}{2} \left[ \left(G-\sqrt{2\dt} \, \nabla V\left(q+\frac{\sqrt{2\dt}}{2}\,G\right)\right)^2- G^2 \right] \\
& \qquad\qquad = -\sqrt{2\dt}\, G\cdot \nabla V\left(q+\frac{\sqrt{2\dt}}{2}\,G\right) + \dt \left|\nabla V\left(q+\frac{\sqrt{2\dt}}{2}\,G\right)\right|^2 \\
& \qquad\qquad = -\sqrt{2\dt} \, G\cdot\nabla V(q) + \dt \left(\left|\nabla V(q)\right|^2 - D^2 V(q) \cdot G^{\otimes 2} \right) \\
& \qquad\qquad \ \ + \dt^{3/2} \left(-\frac{\sqrt{2}}{4} \, D^3 V(q)\cdot G^{\otimes 3} + \sqrt{2} D^2 V(q)\cdot(G \otimes \nabla V(q)) \right) \\
& \qquad\qquad \ \ + \dt^{2} \left(-\frac{1}{12}D^4 V(q)\cdot G^{\otimes 4} + \frac12 D^3 V(q)\cdot\left(G^{\otimes 2}\otimes \nabla V(q) \right) + \frac12 \left|\nabla^2 V(q)\cdot G\right|^2\right) \\
& \qquad\qquad \ \ + \dt^{5/2} \mathcal{K}_{5/2}(q,G) + \dt^3 \mathcal{K}_\dt(q). 
\end{aligned}
\]
Finally,
\[
\alpha_\dt(q,\Phi_\dt(q,G)) = \dt^{3/2} \xi_{3/2}(q,G) + \dt^2 \xi_2(q,G) + \dt^{5/2} \xi_{5/2}(q,G) + \dt^3 \widetilde{\alpha}_\dt(q,G),
\]
with
\[
\begin{aligned}
\xi_{3/2}(q,G) & = \sqrt{2} \left(\frac{1}{12} D^3V(q) \cdot \left(G\right)^{\otimes 3} - \frac12 D^2 V(q)\cdot (G \otimes \nabla V(q))\right), \\
\xi_2(q,G) & = -\frac{3}{4} D^3V(q) \cdot \left(\nabla V(q) \otimes G^{\otimes 2}\right) + \frac12 \, D^2V(q)\cdot \left(\nabla V(q) \otimes \nabla V(q)\right) + \frac{1}{12} \, D^4 V(q) \cdot G^{\otimes 4} \\
& \ \ - \frac12 \left|\nabla^2 V(q)\cdot G\right|^2,
\end{aligned}
\]
and where $\xi_{5/2}(q,G)$ involves only odd powers of~$G$. In conclusion, the term $D^2 V(q)\cdot (\nabla V(q) \otimes \nabla \psi(q))$, which is absent in the expression of~$T_{4,\psi}$ compared to the corresponding expression for the midpoint proposal, is compensated by an extra term in the expression of~$\xi_{3/2}$. It is then easy to prove that~\eqref{eq:weak_type_expansion} holds.

\begin{remark}[Average rejection rate]
As for the midpoint proposal used with the Barker rule, it is possible to prove that the average rejection rate at equilibrium is $1/2 + \mathrm{O}(\dt^3)$ (see~\eqref{eq:avg_rejection_rate_Barker} above). This computation relies on the fact that  
\[
\overline{\xi}_2(q) = \EE_G\left[\xi_{2}(q,G)\right] = - \frac34 \nabla (\Delta V) \cdot \nabla V(q) + \frac{1}{4} \Delta^2 V(q) + \frac12 \nabla V^T \nabla^2 V\nabla V(q) - \frac12 \Tr\left[(\nabla^2 V(q))^2\right]
\]
has a vanishing average with respect to~$\mu$. To prove the latter statement, we compute
\[
\begin{aligned}
\int_\cM \Tr\left[(\nabla^2 V(q))^2\right] \, d\mu & = \sum_{i,j=1}^d \int_\cM \left(\partial^2_{q_i,q_j} V\right)^2 \, d\mu = -\sum_{i,j=1}^d \int_\cM \partial_{q_i} V \left(\partial^3_{q_i,q_j,q_j} V - \partial^2_{q_i,q_j} V \partial_{q_j}V \right) \, d\mu \\
& = -\int_\cM \nabla (\Delta V) \cdot \nabla V \, d\mu + \int_\cM \nabla V^T \nabla^2 V\nabla V \, d\mu,
\end{aligned}
\]
and use~\eqref{eq:identity_Delta2}.
\end{remark}

\paragraph{HMC proposal with Metropolis rule.} The result is obtained by a straightforward modification of the argument for the midpoint scheme. We therefore omit the proof.

\subsection{Proof of Lemma~\ref{lem:geometric_ergodicity}}
\label{sec:proof:lem:geometric_ergodicity}

\paragraph{Well posedness of the midpoint scheme.}
To prove that the implicit method is well defined, we use a fixed-point argument. For a given $q \in \cM$ and $G \in \RR^d$, we define $Q^0 = q$ and $Q^{k+1} = \mathscr{F}_\dt(Q^k)$ with
\[
\mathscr{F}_\dt(Q) = q - \dt \, \nabla V\left(\frac{q+Q}{2}\right) + \sqrt{2\dt} \, G.
\]
Note that, upon introducing the global Lipschitz constant $L_V$ of $-\nabla V$,
\[
|\mathscr{F}_\dt(Q)-\mathscr{F}_\dt(Q')| = \dt \left|\nabla V\left(\frac{q+Q}{2}\right)-\nabla V\left(\frac{q+Q'}{2}\right)\right| \leq \frac{L_V \dt}{2} |Q-Q'|.
\]
When $\dt < 2/L_V$, the mapping $\mathscr{F}_\dt$ is a contraction, so that the existence and uniqueness of $\widetilde{q}^{n+1}$ is ensured by the Banach fixed point theorem.

\paragraph{Geometric ergodicity.}
We prove the geometric ergodicity of schemes based on the midpoint proposal~\eqref{eq:proposal_weak_2nd_order}, the computations for the HMC proposal being similar. Our aim is to prove that, for a given physical time $T>0$, there exists $\alpha > 0$ and a probability measure~$\nu$ on~$\cM$ such that the following uniform minorization condition holds (see for instance~\cite{BH13,LMS13} for related estimates):
\begin{equation}
  \label{eq:uniform_minorization_Pdt}
  P_\dt^{\lceil T/\dt \rceil}(q,\cdot) \geq \alpha \, \nu,
\end{equation}
where $\lceil x \rceil$ denotes the smallest integer larger than~$x$. The term ``uniform'' refers to estimates which are independent of the timestep~$\dt$. To obtain such estimates, we have to consider evolutions over fixed physical times, which amounts to iterating the elementary evolution $P_\dt$ over $\lceil T/\dt \rceil$ timesteps. By the results of~\cite{HM11} for instance, \eqref{eq:uniform_minorization_Pdt} implies that there exists $\lambda, \widetilde{C} >0$ such that, for any $f \in \widetilde{L}^\infty(\cM)$,
\[
\left\| P_\dt^{\lceil 1/\dt \rceil}f \right\|_{L^\infty} \leq \widetilde{C} \, \rme^{-\lambda} \| f \|_{L^\infty}, 
\]
from which~\eqref{eq:geom_ergod} follows.

The strategy of the proof of~\eqref{eq:uniform_minorization_Pdt} is the following. We denote by $\widetilde{P}_\dt$ the transition kernel associated with the Markov chain where we perform a move according to the proposal function, and always accept it. We first show in Lemma~\ref{lem:minorization_Phi_no_rejection} that $\widetilde{P}_\dt^n$ satisfies a uniform minorization condition when iterated for a number of steps larger than a fraction of~$T/\dt$. We next show that this property is transferred to the scheme $P_\dt$ where acceptance/rejection is performed according to the Barker or Metropolis rules. 

\begin{lemma}[Uniform minorization condition for schemes without rejection]
\label{lem:minorization_Phi_no_rejection}
Fix $T>0$. There exist $\dt^*, \widetilde{\alpha} > 0$ and a probability measure~$\nu$ such that, for any bounded measurable non-negative function $f$, any $0 < \dt \leq \dt^*$ and $q \in \cM$,
\[
\forall n \in \left\{ \left\lfloor \frac{T}{4\dt} \right\rfloor, \dots, \left\lceil \frac{T}{\dt} \right\rceil \right\}, \qquad \left(P_\dt^n f \right)(q) \geq \widetilde{\alpha} \int_\cM f \, d\nu.
\]
\end{lemma}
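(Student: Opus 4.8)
The statement is a Doeblin-type minorization over a window of step-counts corresponding to a fixed physical time; since $n\dt\asymp T$ for $n$ in the prescribed range, the proposal chain (without rejection) over $n$ steps behaves like the driftless, purely diffusive motion over a time of order~$T$, which on the compact space~$\cM$ has an endpoint density bounded below. The plan is to make this precise by a discrete Girsanov comparison between the proposal dynamics and a driftless Gaussian walk, and to control the resulting weight after conditioning on the endpoint.

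I would work with the $\R^d$-valued lift $(Q^k)$ of the chain, started from $Q^0=q$, whose endpoint $Q^n$ has a genuine density on $\R^d$ (no periodization); projecting onto $\cM$ only increases densities, and the nearest lift of any $q'\in\cM$ satisfies $|q'-q|\leq\mathrm{diam}(\cM)$. Let $\PP$ denote the law of the proposal trajectory and $\mathbb{Q}$ the law of the driftless walk $Q^{k+1}=Q^k+\sqrt{2\dt}\,G^k$, so that under $\mathbb{Q}$ one has $Q^n\sim\mathcal{N}(q,2n\dt\,\Id)$ with density $\bar g(x)=(4\pi n\dt)^{-d/2}\exp(-|x-q|^2/(4n\dt))\geq c_G>0$ uniformly on $\cM$ because $2n\dt\asymp T$. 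A direct computation of Gaussian ratios gives the path-space weight
\[
\frac{d\PP}{d\mathbb{Q}}=M_n=\exp\!\left(-\frac12\sum_{k=0}^{n-1}\nabla V(Q^k)\cdot\Delta^k-\frac{\dt}{4}\sum_{k=0}^{n-1}|\nabla V(Q^k)|^2\right),\qquad \Delta^k=Q^{k+1}-Q^k,
\]
for the Euler proposal, with an extra bounded Jacobian factor equal to $\exp(\mathrm{O}(n\dt))$ for the midpoint and HMC proposals (well defined and two-sided bounded for $\dt\leq\dt^*$ since the relevant derivatives of $V$ are bounded on~$\cM$). Disintegrating with respect to the endpoint yields the exact identity $g(x)=\bar g(x)\,\EE_{\mathbb{Q}}\big[M_n\mid Q^n=x\big]$ for the density~$g$ of $Q^n$ under~$\PP$.

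It then remains to bound $\EE_{\mathbb{Q}}[M_n\mid Q^n=x]$ from below uniformly in $\dt$, $n$ and $x\in\cM$. By the conditional Jensen inequality this reduces to a lower bound on $\EE_{\mathbb{Q}}[\log M_n\mid Q^n=x]$. The second sum in $\log M_n$ is deterministically $\geq-\tfrac{\dt}{4}n\sup_\cM|\nabla V|^2\geq-\tfrac12 T\sup_\cM|\nabla V|^2$. For the first sum I would exploit that $\nabla V$ is an exact gradient: a Taylor expansion gives $\sum_k\nabla V(Q^k)\cdot\Delta^k=V(Q^n)-V(Q^0)-\tfrac12\sum_k(\Delta^k)^T\nabla^2V(\zeta_k)\Delta^k+\mathrm{O}\big(\sum_k|\Delta^k|^3\big)$, so that conditionally on $Q^0=q$, $Q^n=x$ the boundary term $V(x)-V(q)$ is a bounded constant while the remaining terms are controlled by the conditional quadratic variation. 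The key estimate is that, under the Gaussian bridge $\mathbb{Q}(\cdot\mid Q^n=x)$, one has $\EE_{\mathbb{Q}}[\sum_k|\Delta^k|^2\mid Q^n=x]\leq 2dn\dt+|x-q|^2/n\leq C\,T$ uniformly, whence $\EE_{\mathbb{Q}}[\log M_n\mid Q^n=x]\geq -C_*$ for a constant $C_*$ depending only on $T$ and on $\sup_\cM(|V|+|\nabla V|^2+|\nabla^2V|)$. Combining the pieces gives $g(x)\geq \rme^{-C_*}c_G=:\widetilde\alpha>0$ on $\cM$, and with $\nu$ the normalized Lebesgue measure we obtain $(\widetilde P_\dt^n f)(q)=\int_\cM g(q')f(q')\,dq'\geq\widetilde\alpha\int_\cM f\,d\nu$ for non-negative~$f$, as claimed.

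The main obstacle is the uniform lower bound on the conditioned Girsanov weight: everything hinges on the fact that each per-step contribution to $\log M_n$ is $\mathrm{O}(\dt)$ in conditional mean (the drift cross-term collapsing to a bounded boundary term plus an $\mathrm{O}(n\dt)$ quadratic-variation term precisely because $\nabla V$ is a gradient), so that summing over the $n\asymp T/\dt$ steps produces an $\mathrm{O}(T)$ bound rather than something diverging as $\dt\to0$. The technical heart is therefore the uniform control of the conditional quadratic variation of the driftless Gaussian bridge; the extension to the midpoint and HMC proposals is routine once the Jacobian factors are seen to be $\exp(\mathrm{O}(n\dt))$ and the higher-order terms in $\Delta^k$ are absorbed into the remainder.
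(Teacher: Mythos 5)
Your argument is correct in outline, but it takes a genuinely different route from the paper's. The paper avoids any change of measure: it decomposes the $n$-step position as $\widetilde{q}^{\,n} = q + \mathscr{G}^n + \mathscr{F}^n$, where $\mathscr{G}^n = \sqrt{2\dt}\sum_{k<n} G^k$ is exactly Gaussian with covariance $2n\dt\,\Id$ and $\mathscr{F}^n = -\dt\sum_{k<n}\nabla V(\cdot)$ is bounded by $\|\nabla V\|_{L^\infty}(T+\dt)$ uniformly in $\dt$ and in the realization; it then minorizes $\PP(\widetilde{q}^{\,n}\in E)$ by an infimum over all shifts $|Q|\leq R$ of the Gaussian mass of $E-Q$, and the minorizing measure $\nu$ is \emph{defined} through that infimum. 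You instead run a discrete Girsanov comparison with the driftless walk, disintegrate on the endpoint, and lower-bound the conditioned weight by Jensen together with the gradient structure of the drift (telescoping $\sum_k\nabla V(Q^k)\cdot\Delta^k$ into a boundary term $V(Q^n)-V(Q^0)$ plus a quadratic-variation term that is $\mathrm{O}(T)$ in conditional mean under the Gaussian bridge). Your route is technically heavier but buys more: a pointwise lower bound on the $n$-step transition \emph{density} with respect to Lebesgue measure, so that $\nu$ can be taken to be normalized Lebesgue measure on~$\cM$; it also cleanly separates the randomness of the drift from that of the Gaussian part, whereas the paper's shift argument treats $\mathscr{F}^n$ as a frozen translation inside a Gaussian integral. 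The paper's route is shorter and needs only $\|\nabla V\|_{L^\infty}<\infty$, with no bridge estimates. If you write yours up in full, the two points requiring care are exactly the ones you flag, plus one more: for the midpoint and HMC proposals the per-step log-weight has $\nabla V$ evaluated at a point depending on both endpoints (or on $G$) and carries a Jacobian $\det\bigl(\Id+\mathrm{O}(\dt)\,\nabla^2V\bigr)$; both are $\exp(\mathrm{O}(n\dt))$ over the window $n\dt\asymp T$ as you assert, and the midpoint evaluation in fact improves the telescoping error to $\mathrm{O}(|\Delta^k|^3)$ per step.
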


\begin{proof}
It is sufficient to prove the result for indicator functions of Borel sets~$E \subset \cM$ (see~\cite{Rudin}). Denoting by $\widetilde{q}^{k+1} = \Phi_\dt(\widetilde{q}^k,G^k)$ the iterates of the Markov chain, we therefore aim at proving 
\[
\forall n \in \left\{ \left\lfloor \frac{T}{4\dt} \right\rfloor, \dots, \left\lceil \frac{T}{\dt} \right\rceil \right\}, 
\qquad 
\mathbb{P}\left( \widetilde{q}^n \in E \, \left| \, \widetilde{q}^0 = q \right.\right) \geq \widetilde{\alpha} \, \nu(E),
\]
for a well chosen probability measure~$\nu$ and a constant $\widetilde{\alpha} > 0$. The idea of the proof is to explicitly rewrite $\widetilde{q}^n$ as a perturbation of the reference evolution corresponding to $\nabla V = 0$. Since we consider smooth potentials and the position space is compact, the perturbation can be uniformly controlled. More precisely,
\begin{equation}
  \label{eq:decomposition_qn}
  \widetilde{q}^n = \widetilde{q}^0 + \mathscr{G}^n + \mathscr{F}^n, 
\end{equation}
with
\[
\mathscr{G}^n = \sqrt{2\dt} \sum_{k=0}^{n-1} G^k, 
\qquad 
\mathscr{F}^n = - \dt \sum_{k=0}^{n-1} \nabla V\left(\frac{\widetilde{q}^k+\widetilde{q}^{k+1}}{2}\right).
\]
Note that $|\mathscr{F}^n| \leq \|\nabla V\|_{L^\infty} n\dt \leq \|\nabla V\|_{L^\infty} (T+\dt)$, while $\mathcal{G}^n$ is a Gaussian random variable with 
covariance matrix $2n\dt\, \Id_d$. Therefore,
\begin{equation}
  \label{eq:estimate_proba_qn_in_A}
  \mathbb{P}\left( \widetilde{q}^n \in E \, \left| \, \widetilde{q}^0 = q \right.\right) = \mathbb{P}\left( \mathscr{G}^n \in E-q-\mathscr{F}^n \right) \geq \left(\frac{1}{4\pi n\dt}\right)^{d/2} \int_{E-q-\mathscr{F}^n} \exp\left(-\frac{|g|^2}{4n\dt}\right) \, dg.
\end{equation}
In the latter expression and in the sequel, we consider that the random variable $\mathcal{G}^n$ has values in~$\mathbb{R}^{d}$ rather than~$\cM$ and understand $E - q - \mathscr{F}^n$ as a subset of~$\mathbb{R}^{d}$ rather than~$\cM$. This amounts to neglecting the possible periodic images, and henceforth leads to the second inequality. Now, for $\dt$ sufficiently small, it holds $T/8 \leq n\dt \leq 2T$, so that
\[
\left(\frac{1}{2\pi n\dt}\right)^{d/2} \int_{E-q-\mathcal{F}^n} \exp\left(-\frac{|g|^2}{2n\dt}\right) \, dg 
\geq \left(\frac{1}{4\pi T}\right)^{d/2} \int_{E-q-\mathcal{F}^n} \exp\left(-\frac{4|g|^2}{T}\right) \, dg.
\]
Since the state space is compact, there exists $R > 0$ such that $|q+\mathscr{F}^n| \leq R$ for any $q \in \cM$. We can then consider the probability measure
\[
\nu(E) = Z_R^{-1} \inf_{|Q| \leq R} \int_{E + Q} \exp\left(-\frac{4|g|^2}{T}\right) \, dg,
\]
and the constant
\[
\widetilde{\alpha} = Z_R \left(\frac{1}{4\pi T}\right)^{d/2},
\]
which gives the claimed result. 
\end{proof}

Let us now show how to adapt the proof of Lemma~\ref{lem:minorization_Phi_no_rejection} to the case when the proposals are accepted or rejected according to some rule (Metropolis or Barker). We set $a=1/2$ for the Barker rule and $a=1$ for the Metropolis one. Note first that~\eqref{eq:decomposition_qn} is modified as
\[
q^n = q^0 + \mathscr{G}^n + \mathscr{F}^n,
\]
with
\[
\begin{aligned}
\mathscr{G}^n & = \sqrt{2\dt} \sum_{k=0}^{n-1} \ind_{U^k \leq A_\dt\left(q^k,\Phi_\dt(q^k,G^k)\right)} G^k, \\
\mathscr{F}^n & = - \dt \sum_{k=0}^{n-1} \ind_{U^k \leq A_\dt\left(q^k,\Phi_\dt(q^k,G^k)\right)} \nabla V\left(\frac{\widetilde{q}^k+\Phi_\dt(q^k,G^k)}{2}\right).
\end{aligned}
\]
It still holds $|\mathscr{F}^n| \leq \|\nabla V\|_{L^\infty} (T+\dt)$. To characterize more precisely $\mathscr{G}^n$, we decompose it as $\mathscr{G}^n = \widetilde{\mathscr{G}}^n + \widehat{\mathscr{G}}^n$, where
\[
\widetilde{\mathscr{G}}^n = \sqrt{2\dt} \sum_{k=0}^{n-1} \ind_{U^k \leq a} G^k
\]
and 
\[
\widehat{\mathscr{G}}^n = \sqrt{2\dt} \sum_{k=0}^{n-1} \left(\ind_{U^k \leq A_\dt\left(q^k,\Phi_\dt(q^k,G^k)\right)} -\ind_{U^k \leq a} \right) G^k.
\]
The latter random variable can be thought of as being small. To quantify this statement, we rewrite each term in the sum defining $\widehat{\mathscr{G}}^n$ as some drift plus a martingale increment, independent of the previous increments. More precisely,
\[
\left(\ind_{U^k \leq A_\dt\left(q^k,\Phi_\dt(q^k,G^k)\right)} -\ind_{U^k \leq a} \right) G^k = D(q^k) + M^k,
\]
where $\EE(M^k | \mathcal{F}_k) = 0$ ($\mathcal{F}_k$ denoting the filtration of events up to iteration~$k$), and 
\begin{equation}
  \label{eq:drift_martingale_decomposition_minorization}
  D(q) = \EE_{U,G}\left[ \left(\ind_{U \leq A_\dt\left(q,\Phi_\dt(q,G)\right)} -\ind_{U \leq a} \right) G\right] = \EE_G\left[ \left( A_\dt\left(q,\Phi_\dt(q,G)\right)-a\right) G\right].
\end{equation}
In view of~\eqref{eq:scaling_rejection_rate_Barker} and~\eqref{eq:rate_Metropolis_midpoint}, the drift term is of order~$\dt^{3/2}$: there exists $C>0$ such that 
\begin{equation}
\label{eq:estimate_drift}
|D(q)| \leq C \dt^{3/2}.
\end{equation}
On the other hand, 
\[
\begin{aligned}
\EE\left[\left. \left(M^k\right)^2 \right| \mathcal{F}_k \right] & = \EE_{U,G}\left[ \left(\ind_{U \leq A_\dt\left(q^k,\Phi_\dt(q^k,G)\right)} -\ind_{U^k \leq a} \right)^2 G^2\right] - D(q^k)^2 \\
& = \EE_G\left( \left[ a+A_\dt\left(q^k,\Phi_\dt(q^k,G)\right)-2\min\left(A_\dt\left(q^k,\Phi_\dt(q^k,G)\right),a\right) \right]G^2\right) - D(q^k)^2 \\
& \leq K\dt^{3/2},
\end{aligned}
\]
so that
\[
\EE_{q^0}\left[\left(\sum_{k=0}^{n-1} M^k\right)^2\right] \leq nK\dt^{3/2}.
\]
Therefore, by the Chebyshev inequality,
\[
\PP\left(\left| \widehat{\mathscr{G}}^n - \sqrt{2\dt}\sum_{k=0}^{n-1} D(q^k) \right| \geq \eta \, \sqrt{2Kn\dt^{5/2}} \right) = \PP\left(\left|\sum_{k=0}^{n-1} M^k \right| \geq \eta \, \sqrt{Kn\dt^{3/2}} \right) \leq \frac{1}{\eta^2}.
\]
By considering $\eta = \widetilde{\eta} \, \dt^{-5/4}$, it follows that there exists $C >0$ such that
\begin{equation}
\label{eq:bound_sum_martingale}
\forall n \leq \left\lceil \frac{T}{\dt}\right\rceil, 
\qquad 
\PP\left(\left| \widehat{\mathscr{G}}^n - \sqrt{2\dt}\sum_{k=0}^{n-1} D(q^k) \right| \geq \frac12 \right) \leq C \dt^{5/2}.
\end{equation}
Since, by~\eqref{eq:estimate_drift},
\[
\left| \sqrt{2\dt}\sum_{k=0}^{n-1} D(q^k) \right| \leq \widetilde{C} \dt,  
\]
it finally holds, for $\dt$ sufficiently small, 
\[
\forall n \leq \left\lceil \frac{T}{\dt}\right\rceil, 
\qquad 
\PP\left(\left| \widehat{\mathscr{G}}^n \right| \geq 1 \right) \leq C \dt^{5/2} \leq \frac12.
\]

We next write, as in~\eqref{eq:estimate_proba_qn_in_A},
\[
\begin{aligned}
\PP\left(q^n \in E \, \Big| \, q^0 = q\right) & = \PP\left( \widetilde{\mathscr{G}}^n \in E-q-\mathscr{F}^n-\widehat{\mathscr{G}}^n \right) \\
& \geq \PP\left(\left.  \widetilde{\mathscr{G}}^n \in E-q-\mathscr{F}^n-\widehat{\mathscr{G}}^n \right| \, \left| \widehat{\mathscr{G}}^n \right| \leq 1\right) \PP\left(\left| \widehat{\mathscr{G}}^n \right| \leq 1 \right) \\
& \geq \frac12 \PP\left(\left.  \widetilde{\mathscr{G}}^n \in E-q-\mathscr{F}^n-\widehat{\mathscr{G}}^n \right| \, \left| \widehat{\mathscr{G}}^n \right| \leq 1\right).
\end{aligned}
\]
In view of the bounds on~$\mathscr{F}^n$, there exists $R \in (0,+\infty)$ such that $\left|q+\mathscr{F}^n+\widehat{\mathscr{G}}^n\right| \leq R$ when $\left| \widehat{\mathscr{G}}^n \right| \leq 1$. Therefore,
\begin{equation}
  \label{eq:estimate_qn_in_A_involving_inf}
  \PP\left( q^n \in E \, \Big| \, q^0 = q\right) \geq \frac12 \inf_{|Q| \leq R} \PP\left(\widetilde{\mathscr{G}}^n \in E-Q \right).
\end{equation}
In order to conclude the proof, we need to determine the law of $\widetilde{\mathscr{G}}^n$. When the Metropolis rule is used, $\widetilde{\mathscr{G}}^n$ simply is a Gaussian random variable of mean~0 and covariance matrix~$2n\dt\,\Id_d$, and the desired conclusion follows by the same manipulations as the one performed below~\eqref{eq:estimate_proba_qn_in_A}. The case of the Barker rule requires some additional work. Let us first introduce the random variable $\mathcal{N}_n$ which counts the number of times $U^k \leq a$ for $0 \leq k \leq n-1$. Of course, $\mathcal{N}_n$ is a binomial law of parameters~$1/2$ and~$n$, hence its expectation is $\mathbb{E}(\mathcal{N}_n) = n/2$ while its variance is $\mathrm{Var}(\mathcal{N}_n) = n/4$. Therefore, by the Chebyshev inequality,
\[
\PP\left( \left|\mathcal{N}_n -\frac{n}{2}\right| \geq \eta \, \frac{\sqrt{n}}{2} \right) \leq \frac{1}{\eta^2},
\]
so that 
\[
\PP\left (\mathcal{N}_n \geq \frac{n}{4} \right) \geq 1-\frac{4}{n}.
\]
On the other hand, conditionally to $\mathcal{N}_n = m$, the random variable $\widetilde{\mathscr{G}}^n$ is a Gaussian random variable of mean~0 and covariance matrix~$2m\dt\,\Id_d$. Therefore, for a given set $\widetilde{E} \subset \cM$,
\[
\begin{aligned}
\PP\left(\widetilde{\mathscr{G}}^n \in \widetilde{E} \right) & \geq \PP\left(\left. \widetilde{\mathscr{G}}^n \in \widetilde{E} \, \right| \,\mathcal{N}_n \geq \frac{n}{4} \right) \PP\left (\mathcal{N}_n \geq \frac{n}{4} \right) \\
& \geq \left( 1-\frac{4}{n} \right) \left(\frac{1}{4\pi n\dt}\right)^{d/2} \int_{\widetilde{E}} \exp\left(-\frac{|g|^2}{n\dt}\right) \, dg.
\end{aligned}
\]
Together with~\eqref{eq:estimate_qn_in_A_involving_inf}, this allows to conclude, as at the end of the proof of Lemma~\ref{lem:minorization_Phi_no_rejection}, that~\eqref{eq:uniform_minorization_Pdt} holds.

\begin{remark}[Extension to the case of dynamics with multiplicative noise]
  \label{rmk:extension_ergo_mult}
  To extend the above proof to discretization of dynamics such as~\eqref{eq:dynamics_mult}, the key point is to appropriately
  bound $\widehat{\mathscr{G}}^n$ since the rejections are encoded in this random variable. To this end, we note that the average 
  drift~\eqref{eq:drift_martingale_decomposition_minorization}, which seems to be of order~$\sqrt{\dt}$, in fact is of order~$\dt$ 
  in view of~\eqref{eq:rate_Metropolis_multiplicative}-\eqref{eq:rate_Barker_multiplicative} and Lemma~\ref{lem:avg_drit_0_mult};
  while a bound similar to~\eqref{eq:bound_sum_martingale} holds with $C\dt^{3/2}$ on the right-hand side since the variance of the martingale increments is of order~$\sqrt{\dt}$ rather than~$\dt^{3/2}$.
\end{remark}

\subsection{Proof of Theorem~\ref{thm:improved_GK}}
\label{sec:proof_thm:improved_GK}

We follow the strategy of~\cite[Section~3.8]{LMS13} (as already done in~\cite[Section~5.4]{FHS14}) and write an approximation of $\cL^{-1}$ using the discrete evolution operator~$P_\dt$. We write the proof in the general case when
\begin{equation}
  \label{eq:general_P_dt}
  \frac{P_\dt-\Id}{\dt} \psi = a_1 \cL \psi + a_2 \dt \cL^2 \psi + \dt^{\alpha} \, r_{\psi,\dt},
\end{equation}
for $a_1 > 0$, $a_2 \geq 0$ and $1 < \alpha \leq 2$, and $r_{\psi,\dt}$ is uniformly bounded for $\dt$ sufficiently small. The cases of interest are given by~\eqref{eq:weak_type_expansion}. In particular, $a_2 = a_1/2$ in all cases. Note that (with equalities in $\widetilde{L}^\infty(\cM)$),
\[
\begin{aligned}
\left(-\mathcal{L}\right)^{-1}\psi &= \left(\dt \sum_{n=0}^{+\infty} P_\dt^n \right)\left(\frac{\Id - P_\dt}{\dt}\right)\left(-\mathcal{L}^{-1}\right)\psi \\
&= \left(\dt \sum_{n=0}^{+\infty} P_\dt^n \right) \left( \left(a_1 + a_2\dt \mathcal{L}\right)\psi + \dt^{\alpha} r_{\cL^{-1}\psi,\dt} \right). 
\end{aligned}
\]
Since $\cL^{-1}\psi$ still is a smooth function (by elliptic regularity), the remainder $r_{\cL^{-1}\psi,\dt}$ is uniformly bounded in $L^\infty(\cM)$ by Lemma~\ref{lem:weak_type_expansion}. Note also that since $(\Id-P_\dt)\cL^{-1}\psi$ and $\cL\psi$ have vanishing averages with respect to~$\mu$, the remainder $r_{\cL^{-1}\psi,\dt}$ has a vanishing average with respect to~$\mu$. Moreover, in view of~\eqref{eq:general_P_dt},
\[
\cL \psi = -\frac{1}{a_1} \frac{\Id - P_\dt}{\dt} \psi + \dt\, \widehat{r}_{\psi,\dt},
\]
with a remainder uniformly bounded in~$\dt$ and with vanishing average with respect to~$\mu$,
so that
\[
\left(\dt \sum_{n=0}^{+\infty} P_\dt^n \right) \cL \psi = \left(\frac{\Id - P_\dt}{\dt}\right)^{-1}\cL \psi = -\frac{1}{a_1}\psi + \dt \, \left(\frac{\Id - P_\dt}{\dt}\right)^{-1}\widehat{r}_{\psi,\dt}.
\]
The above equalities show that
\[
\begin{aligned}
\int_\cM \left(-\mathcal{L}^{-1}\psi\right)\varphi \, d\mu & = a_1 \dt \sum_{n=0}^{+\infty} \int_\cM \left( P_\dt^n \psi\right) \varphi \, d\mu -\frac{a_2}{a_1} \dt \int_\cM \psi\, \varphi \, d\mu \\
& \ \ + \dt^\alpha \int_\cM \left[\left(\frac{\Id - P_\dt}{\dt}\right)^{-1}\left( r_{\cL^{-1}\psi,\dt} + a_2 \dt^{2-\alpha}\widehat{r}_{\psi,\dt}\right)\right] \varphi \, d\mu,
\end{aligned}
\]
where the sum is convergent in view of~\eqref{eq:geom_ergod}. This gives the result, in view of the boundedness of the operator $\dps \left(\frac{\Id - P_\dt}{\dt}\right)^{-1}$ (given by~\eqref{eq:bound_discrete_generator}).

\subsection{Proof of Theorem~\ref{thm:fluctuation}}
\label{sec:proof_improved_Einstein}

The proof follows the lines of the proof of~\cite[Theorem~3]{FHS14}. We write it in the more general case when $P_\dt$ satisfies~\eqref{eq:general_P_dt}. We need preliminary results on the average behavior of the increments. The first result is obtained by considering, for a given function~$f$, the conditional increment $\overline{f}(q) = \mathbb{E}\left(\left. f(q^1-q^0)\, \right| q^0=q\right)$ and using the expansion of $P_\dt$ in powers of~$\dt$. In order to state the result, we introduce the operator
\[
(\cL_Q^i f)(q) = (\cL^i \tau_Q f)(q), \qquad \tau_a f(\delta) = f(\delta-a).
\]
To be more explicit, $(\cL_Q f)(q) = -\nabla V(q)\cdot \nabla f(q-Q) + \Delta f(q-Q)$, so that, in particular, $(\cL_q f)(q) = -\nabla V(q)\cdot \nabla f(0) + \Delta f(0)$. The expression of $\left(\cL_q^2 f\right)(q)$ is similarly obtained by writing the expression of $\cL^2 f(q)$ and replacing the arguments of~$f$ by~0 everywhere.

\begin{lemma}
  \label{lem:expansion_overline}
  Consider two smooth functions $f,g:\RR^d \to \RR$. Then,
  \begin{equation}
    \label{eq:avg_cLq}
  \overline{f}(q) = \mathbb{E}_{G,U}\Big(f\big(\delta_\dt(q,G,U)\big)\Big) = f(0) 
  + a_1 \dt (\cL_q f)(q) + a_2 \dt^2 \left(\cL_q^2 f\right)(q) + \dt^{\alpha+1} r_{f,\dt},
  \end{equation}
  and
  \begin{equation}
    \label{eq:correlation}
    \frac{1}{\dt} \left( \mathbb{E}_{G,U}\Big(f\big(\delta_\dt(q,G,U)\big)g\big(\delta_\dt(q,G,U)\big)\Big) - \overline{f}(q)\overline{g}(q) \right) = 2a_1 \nabla f(0) \cdot \nabla g(0) + \dt C_{f,g}(q) + \dt^\alpha r_{f,g,\dt},
  \end{equation}
  where
  \begin{equation}
    \label{eq:def_C_fg}
    C_{f,g}(q) = a_2 \Big( \left[\cL^2_q(fg)\right](q) - f(0) \left(\cL_q^2 g\right)(q) - \left(\cL_q^2 f\right)(q) g(0) \Big) - a_1^2 (\cL_q f)(q) (\cL_q g)(q).
  \end{equation}
\end{lemma}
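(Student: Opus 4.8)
The plan is to reduce both identities to the weak-type expansion~\eqref{eq:general_P_dt} of $P_\dt$ by rewriting the increment average $\overline{f}$ as the action of $P_\dt$ on a \emph{shifted} observable. The crucial observation is that, since $\delta_\dt(q,G,U) = q^1 - q$ is the (unperiodized) displacement produced by one step from $q$, one has $f(\delta_\dt(q,G,U)) = (\tau_Q f)(q^1)\big|_{Q=q}$, where $\tau_Q f(x) = f(x-Q)$. Freezing the shift parameter $Q$ and averaging over $G,U$ therefore gives $\overline{f}(q) = \big(P_\dt\,\tau_Q f\big)(q)\big|_{Q=q}$, where $P_\dt$ acts on the evaluation variable of $\tau_Q f$ while $Q$ stays frozen until the very end. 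Here I would use that the expansion of Lemma~\ref{lem:weak_type_expansion} is derived from the purely local formula $\EE_G\big[A_\dt(q,\Phi_\dt(q,G))(\psi(\Phi_\dt(q,G))-\psi(q))\big]$, which only involves the derivatives of $\psi$ at $q$ and makes sense verbatim for the non-periodic smooth function $\psi = \tau_Q f$ (periodization of $\Phi_\dt$ being irrelevant, as the large-jump events $|\Phi_\dt(q,G)-q|\gtrsim 1$ carry probability exponentially small in $1/\dt$).

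First I would insert $\psi = \tau_Q f$ into~\eqref{eq:general_P_dt}, rewritten as $P_\dt\psi - \psi = a_1\dt\,\cL\psi + a_2\dt^2\,\cL^2\psi + \dt^{\alpha+1}r_{\psi,\dt}$, evaluate at $q$, and set $Q=q$. Using $(\tau_Q f)(q)|_{Q=q}=f(0)$ together with the defining relation $(\cL_Q^i f)(q)=(\cL^i\tau_Q f)(q)$, so that $(\cL^i\tau_Q f)(q)|_{Q=q}=(\cL_q^i f)(q)$, this yields~\eqref{eq:avg_cLq} directly. The remainder $r_{f,\dt}$ is uniformly bounded because in the application $f$ is at most a coordinate projection, hence has bounded derivatives, so the Gaussian integral over $G$ controlling the Taylor remainder of Lemma~\ref{lem:weak_type_expansion} converges uniformly in $\dt$ (and in $Q$).

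For~\eqref{eq:correlation}, since $f(\delta)g(\delta)=(fg)(\delta)$, I would apply~\eqref{eq:avg_cLq} to the three observables $fg$, $f$ and $g$ and subtract. The constant terms cancel; for the order-$\dt$ contribution I would use the carr\'e-du-champ identity $\cL(\phi\varphi)=\phi\,\cL\varphi+\varphi\,\cL\phi+2\nabla\phi\cdot\nabla\varphi$, which after shifting and setting $Q=q$ reads $(\cL_q(fg))(q)=f(0)(\cL_q g)(q)+g(0)(\cL_q f)(q)+2\nabla f(0)\cdot\nabla g(0)$. Its first two terms are exactly those produced by $\overline{f}\,\overline{g}$ at order $\dt$, so after dividing by $\dt$ the leading term is precisely $2a_1\nabla f(0)\cdot\nabla g(0)$. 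Collecting the order-$\dt^2$ coefficients of $\overline{fg}$ and of the product $\overline{f}\,\overline{g}$ (whose cross term contributes the summand $-a_1^2(\cL_q f)(q)(\cL_q g)(q)$ to the difference) reproduces $C_{f,g}(q)$ as in~\eqref{eq:def_C_fg}, while all genuine $\dt^3$ products and all fractional $\dt^{\alpha+1}$ products are absorbed into the $\dt^\alpha$ remainder after division by $\dt$, since $\alpha\le 2$. The only real subtlety, and the main obstacle, is the very first step: justifying that the weak expansion~\eqref{eq:general_P_dt}, stated for periodic test functions, applies to the shifted observable $\tau_Q f$ with a remainder uniform in both $Q$ and $\dt$; this is where the locality of the expansion and the control of the Gaussian tails (together with the growth of $f$) are essential.
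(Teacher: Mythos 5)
Your argument is exactly the paper's: the paper proves \eqref{eq:avg_cLq} by viewing $\overline{f}$ as the action of $P_\dt$ on the shifted observable $\tau_Q f$ (evaluated at $Q=q$) and invoking the weak-type expansion \eqref{eq:general_P_dt}, and obtains \eqref{eq:correlation} by applying \eqref{eq:avg_cLq} to $fg$ and subtracting $\overline{f}\,\overline{g}$, just as you do. Your identification of the leading term via the carr\'e-du-champ identity and of the cross term $-a_1^2(\cL_q f)(\cL_q g)$ is correct, and your flagged subtlety about applying the expansion to the non-periodic shifted observable is the right one (harmless here since the relevant $f$ have polynomially bounded derivatives).
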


The equality~\eqref{eq:correlation} is obtained by an application of~\eqref{eq:avg_cLq} to the function $fg$. The second result on the average behavior of the increments is the following.

\begin{lemma}
  \label{lem:solution_Poisson}
  Set $\alpha = 3/2$, $\gamma = 2$ and $a=1$ for schemes based on the Metropolis rule, while $\alpha = 2$, $\gamma = 3$ and $a=1/2$ for schemes based on the Barker rule. For $\dt$ sufficiently small,
  \begin{equation}
    \label{eq:expansion_increment}
    \overline{\delta}_\dt(q) = \EE_{G,U}[\delta_\dt(q,G,U)] = -\dt \left(a_1 + a_2\dt \, \cL\right)\nabla V(q) + \dt^{\alpha+1} r_{\dt,\delta}(q),
  \end{equation}
  where $r_{\dt,\delta}$ is uniformly bounded in $L^\infty(\cM)$ for~$\dt$ sufficiently small. The function~$\overline{\delta}_\dt$ has average~0 with respect to~$\mu$, and the unique solution in $L^\infty(\cM)$ of the Poisson equation
  \[
  \left(P_\dt-\mathrm{Id}\right) N_{\dt} = \overline{\delta}_\dt, \qquad \int_\cM N_\dt\,d\mu = 0, 
  \]
  can be expanded as 
  \begin{equation}
    \label{eq:expansion_N_dt}
    N_{\dt} = -\mathcal{L}^{-1} \nabla V + \dt^\alpha \widetilde{N}_\alpha + \dt^\gamma \widetilde{N}_{\gamma,\dt},
  \end{equation}
  where $\widetilde{N}_\alpha$ is smooth and $\widetilde{N}_{\gamma,\dt}$ is uniformly bounded in $L^\infty(\cM)$ for~$\dt$ sufficiently small. 
\end{lemma}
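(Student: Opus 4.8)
The plan is to establish the three assertions in turn, drawing on Lemma~\ref{lem:expansion_overline}, the reversibility of the chain, and the uniform resolvent bound~\eqref{eq:bound_discrete_generator}.

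First, for the expansion~\eqref{eq:expansion_increment}, I would apply~\eqref{eq:avg_cLq} componentwise to the linear functions $f(\delta) = \delta_i$, $i = 1,\dots,d$. For such an $f$ one has $f(0) = 0$, $\nabla f \equiv e_i$ and all higher derivatives vanish, so that $(\cL_q f)(q) = -\nabla V(q)\cdot e_i = -\partial_i V(q)$, and, applying $\cL$ once more to $\tau_q f(q') = (q'-q)_i$, $(\cL_q^2 f)(q) = -(\cL\,\partial_i V)(q)$. Collecting the $d$ components gives exactly $\overline\delta_\dt = -\dt(a_1 + a_2\dt\,\cL)\nabla V + \dt^{\alpha+1}r_{\dt,\delta}$ with $a_1 = a$ and $a_2 = a/2$, the remainder being uniformly bounded since it inherits the bound on $r_{f,\dt}$.

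Second, for the vanishing of the average and the solvability, I would deduce $\int_\cM\overline\delta_\dt\,d\mu = 0$ from the reversibility of the chain with respect to $\mu$ (valid for both the Metropolis and the Barker rule). Concretely, writing $\int_\cM\overline\delta_\dt\,d\mu$ as $Z^{-1}\int \rme^{-V(q)}T_\dt(q,q')A_\dt(q,q')(q'-q)\,dq\,dq'$ and using detailed balance $\rme^{-V(q)}T_\dt(q,q')A_\dt(q,q') = \rme^{-V(q')}T_\dt(q',q)A_\dt(q',q)$ together with the $\bZ^d$-equivariance of the proposal, the exchange $q\leftrightarrow q'$ sends $(q'-q)$ to its opposite, forcing the integral to equal its negative. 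Hence each component of $\overline\delta_\dt$ lies in $\widetilde L^\infty(\cM)$, and since Lemma~\ref{lem:geometric_ergodicity} provides the uniformly bounded inverse $\cR_\dt := \left((\Id-P_\dt)/\dt\right)^{-1}$ on $\widetilde L^\infty(\cM)$, the Poisson equation admits the unique solution $N_\dt = -\cR_\dt(\overline\delta_\dt/\dt)$.

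Third, for the expansion~\eqref{eq:expansion_N_dt}, the clean route is to compare the action of $P_\dt-\Id$ on the explicit candidate $-\cL^{-1}\nabla V$ with $\overline\delta_\dt$. Applying~\eqref{eq:general_P_dt} to $\psi = -\cL^{-1}\nabla V$ gives $(P_\dt-\Id)(-\cL^{-1}\nabla V) = -\dt(a_1+a_2\dt\,\cL)\nabla V + \dt^{\alpha+1}r_{-\cL^{-1}\nabla V,\dt}$, which differs from $\overline\delta_\dt$ only by $\dt^{\alpha+1}(r_{-\cL^{-1}\nabla V,\dt}-r_{\dt,\delta})$. Subtracting and inverting $(\Id-P_\dt)/\dt$ then yields $N_\dt = -\cL^{-1}\nabla V + \dt^\alpha\cR_\dt w_\dt$ with $w_\dt := r_{-\cL^{-1}\nabla V,\dt}-r_{\dt,\delta}$, which has vanishing $\mu$-average. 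To split this into the stated $\dt^\alpha\widetilde N_\alpha + \dt^\gamma\widetilde N_{\gamma,\dt}$, I would carry the Gaussian and Taylor expansions underlying Lemmas~\ref{lem:weak_type_expansion} and~\ref{lem:expansion_overline} one order further, writing $w_\dt = w_0 + \dt^{\gamma-\alpha}\widetilde w_\dt$ with $w_0$ a $\dt$-independent smooth function and $\widetilde w_\dt$ uniformly bounded; combining with the resolvent expansion $\cR_\dt w_0 = -a_1^{-1}\cL^{-1}w_0 + \mathrm{O}(\dt)$ (itself a consequence of~\eqref{eq:general_P_dt} and~\eqref{eq:bound_discrete_generator}) gives $\widetilde N_\alpha = -a_1^{-1}\cL^{-1}w_0$, which is smooth, while all leftover contributions are $\mathrm{O}(\dt^\gamma)$ since $\alpha + 1\ge\gamma$ in both cases.

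The main obstacle is this last refinement, namely establishing that the remainders admit one further term with the precise gap $\gamma-\alpha$ (equal to $1/2$ for Metropolis and $1$ for Barker). The size of this gap is exactly where the distinction between the two acceptance rules enters: for the Barker rule the acceptance probability is an entire function of the energy ratio, so the half-integer powers of $\dt$ carry odd powers of the Gaussian increment and are annihilated by the averaging over $G$, leaving an integer-power expansion and hence $\gamma = \alpha + 1$; for the Metropolis rule the minimum produces a genuine $\dt^{\alpha+1/2}$ contribution that survives the averaging (the analogue of the $f_{3/2}$ term in Section~\ref{sec:proof:lem:weak_type_expansion}), so only $\gamma = \alpha + 1/2$ is available. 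Once this parity bookkeeping is carried out, the remainder of the argument is routine in view of the uniform resolvent bound.
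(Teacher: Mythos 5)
Your proposal is correct and follows essentially the same route as the paper: the expansion of $\overline{\delta}_\dt$ via Lemma~\ref{lem:expansion_overline} applied to $f(\delta)=\delta$, the comparison of $N_\dt$ with $-\cL^{-1}\nabla V$ through the uniform resolvent bound~\eqref{eq:bound_discrete_generator}, the refinement of the weak-type expansion by one further order (your $w_0$ plays the role of the paper's operator $S$ applied to the relevant functions), and the same parity-in-$G$ argument to obtain $\gamma=\alpha+1$ for Barker versus $\gamma=\alpha+1/2$ for Metropolis. The only cosmetic difference is that you prove the vanishing $\mu$-average of $\overline{\delta}_\dt$ directly by detailed balance, where the paper delegates this to a citation.
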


\begin{proof}
The expansion~\eqref{eq:expansion_increment} is a direct consequence of Lemma~\ref{lem:expansion_overline} with the choice $f(\delta) = \delta$. The fact that $\overline{\delta}_\dt(q)$ has average~0 can be proved as in the proof of~\cite[Lemma~4]{FHS14}. From~\eqref{eq:general_P_dt},
\[
\frac{P_\dt-\mathrm{Id}}{\dt}\left(\mathcal{L}^{-1} \nabla V \right) = a_1 \nabla V + a_2 \dt \, \cL \nabla V + \dt^{\alpha+1} r_{\nabla V,\dt},
\]
so that, in view of the equation satisfied by $N_\dt$ and the expansion of~$\overline{\delta}_\dt$, 
\[
\left(P_\dt-\mathrm{Id}\right)\left( N_{\dt} + \mathcal{L}^{-1} \nabla V \right) = \dt^{\alpha+1} r_{\nabla V,\dt}.  
\]
In view of~\eqref{eq:bound_discrete_generator}, this shows that $N_{\dt} = -\mathcal{L}^{-1} \nabla V + \dt^\alpha \widetilde{N}_{\alpha,\dt}$. We need at this stage to obtain weak type expansions such as~\eqref{eq:weak_type_expansion} at higher order. More precisely,
\[
\frac{P_\dt-\Id}{\dt} \psi = a \left(\cL \psi + \frac{\dt}{2} \cL^2 \psi\right) + \dt^{\alpha} S\psi + \dt^\gamma \, r_{\psi,\dt},
\]
where $S$ is some differential operator of finite order which preserves~$\mu$, and the remainder is uniformly bounded for $\dt$ sufficiently small. The proof is a slight extension of the proof of Lemma~\ref{lem:weak_type_expansion} performed in Section~\ref{sec:proof:lem:weak_type_expansion} and is therefore omitted. The important point to note is that $\gamma = 3$ when the Barker rule is used since terms with fractional powers of $\dt$ always come with odd powers of~$G$; while in contrast $\gamma=2$ in the Metropolis case since all terms $\dt^{k/2}$ contribute for $k \geq 1$. By the same computations as above,
\[
\left(P_\dt-\mathrm{Id}\right)\left( N_{\dt} + \mathcal{L}^{-1} \nabla V \right) = \dt^{\alpha+1} S \mathcal{L}^{-1} \nabla V + \dt^\gamma r_{\nabla V,\dt}.
\]
This allows us to identify $\widetilde{N}_\alpha = \cL^{-1} S \mathcal{L}^{-1} \nabla V$ since 
\[
\left(\frac{P_\dt-\mathrm{Id}}{\dt}\right)^{-1} S \mathcal{L}^{-1} \nabla V = \cL^{-1} S \mathcal{L}^{-1} \nabla V + \dt \, \widehat{r}_{\nabla V,\dt}.
\]
The latter equality can be checked by applying $P_\dt-\mathrm{Id}$ on both sides and using~\eqref{eq:bound_discrete_generator}. 
\end{proof}

We can now turn to the proof of Theorem~\ref{thm:fluctuation}.

\begin{proof}[Proof of Theorem~\ref{thm:fluctuation}]
We rewrite the increment $\delta_\dt(q^m,G^m,U^m)$ as the sum of a discrete martingale $\delta_\dt(q^m,G^m,U^m) - \overline{\delta}_\dt(q^m)$ and the average increment~$\overline{\delta}_\dt(q^m)$. We also use Lemma~\ref{lem:solution_Poisson} to rewrite~$\overline{\delta}_\dt(q^m)$ as
\[
\overline{\delta}_\dt(q^m) = P_\dt N_\dt(q^m) - N_\dt(q^m) = \Big( P_\dt N_\dt(q^m) - N_\dt(q^{m+1}) \Big) + \Big( N_\dt(q^{m+1})-N_\dt(q^m) \Big).
\] 
Therefore,
\[
\sum_{m=0}^{n-1} \delta_\dt(q^m,G^m,U^m) = N_{\dt}(q^n) - N_{\dt}(q^0) + \sum_{m=0}^{n-1} M_{\dt}^m,
\]
with
\[
M_{\dt}^m = \delta_\dt(q^m,G^m,U^m) - N_{\dt}(q^m + \delta_\dt(q^m,G^m,U^m)) - \overline{\delta}_\dt(q^m) + \overline{N}_{\dt}(q^m),
\]
where $\overline{N}_{\dt}(q) = (P_\dt N_{\dt})(q) = \EE_{G,U}\left[ N_{\dt}(q + \delta_\dt(q,G,U)) \right]$. Note that $(M_{\dt}^m)_{m \geq 0}$ are stationary, independent martingale increments when $q^0 \sim \mu$. In view of Lemma~\ref{lem:solution_Poisson}, $N_{\dt} \in L^\infty(\cM)$, so that, for a given $\xi \in \RR^d$, it holds in view of~\eqref{eq:def_Qn} and~\eqref{eq:def_D_Einstein_dt},
\begin{equation}
\label{eq:expansion_correlation_Einstein}
\xi^T \mathscr{D}_\dt \xi = \frac{1}{\dt} \mathbb{E}\left[\left(\xi^T M_{\dt}^0\right)^2\right] = \frac{1}{\dt} \int_\cM \left(\mathbb{E}_{G,U}\left(\big[B_{\dt,\xi}(q,\delta_\dt(q,G,U))\big]^2\right)-\left(\overline{B}_{\dt,\xi}(q)\right)^2\right) \mu(dq),
\end{equation}
with $B_{\dt,\xi}(q,\delta) = \xi^T \left(\delta - N_\dt(q+\delta)\right)$ and where the expectation in the first equality is with respect to $G,U$ and $q^0 \sim \mu$. We now use~\eqref{eq:correlation} to compute the right-hand side. Note first that, in view of~\eqref{eq:expansion_N_dt} (setting $f(\delta) = g(\delta) = B_{\dt,\xi}(q,\delta)$, the first argument~$q$ in $B_{\dt,\xi}$ being a parameter),
\[
\begin{aligned}
\xi^T \mathscr{D}_\dt \xi & = \frac{1}{\dt} \int_\cM \left(\mathbb{E}_{G,U}\left(\big[B_{0,\xi}(q,\delta_\dt(q,G,U))\big]^2\right)-\left(\overline{B}_{0,\xi}(q)\right)^2\right) \mu(dq) + \dt^\alpha \xi^T \widetilde{\mathscr{D}}_\dt \xi \\
& = 2a_1 \int_\cM \left( |\xi|^2 - 2 \nabla (\xi^T N_0)\cdot \xi + \left|\nabla\left(\xi^T N_0\right)\right|^2\right) d\mu+ \dt \int_\cM C_{B_{0,\xi},B_{0,\xi}}\, d\mu + \dt^\alpha \xi^T \widehat{\mathscr{D}}_\dt \xi,
\end{aligned}
\]
where $C_{f,g}$ is defined in~\eqref{eq:def_C_fg}. Next, introducing $h_0(\delta) = B_{0,\xi}(q,\delta) = \xi^T \delta - \cL^{-1} \left(\xi^T\nabla V\right)(q+\delta)$ (where again $q$ is a parameter), a simple computation shows that $\nabla h_0(\delta) = \xi - \left[ \nabla \cL^{-1}(\xi^T \nabla V) \right](q+\delta)$ and $\Delta h_0(\delta) = - \left[ \Delta \cL^{-1}(\xi^T \nabla V) \right](q+\delta)$. Therefore, $\left(\cL_Q h_0\right)(q) = -\xi^T \nabla V(q) + \nabla V(q)^T \nabla (\xi^T N_0)(2q-Q) + \Delta (\xi^T N_0)(2q-Q)$ so that $\left(\cL_q h_0\right)(q) = 0$ in view of the definition of~$N_0$. Using the identity
\[
\cL^2(\varphi \psi) = 2 (\cL \varphi) (\cL \psi) + \varphi \cL^2 \psi + (\cL^2 \varphi)\psi + 2 \nabla \varphi \cdot \nabla (\cL \psi) + 2 \nabla (\cL \varphi) \cdot \nabla \psi + 2 \cL\left(\nabla \varphi \cdot \nabla \psi \right),
\]
obtained by iterating $\cL(\varphi \psi) = \varphi \cL \psi + (\cL \varphi)\psi + 2 \nabla \varphi \cdot \nabla \psi$, we also compute
\[
C_{B_{0,\xi},B_{0,\xi}}(q) = 2\cL\left(\left[\xi-\nabla(\xi^TN_0)\right]^2 \right),
\]
which has average~0 with respect to~$\mu$. In conclusion,
\[
\xi^T \mathscr{D}_\dt \xi = 2a_1 \int_\cM \left( |\xi|^2 - 2 \nabla (\xi^T N_0)\cdot \xi + \left|\nabla\left(\xi^T N_0\right)\right|^2\right) d\mu + \dt^\alpha \xi^T \widehat{\mathscr{D}}_\dt \xi.
\]
The result is finally obtained by manipulations similar to the ones used to establish~\cite[Eq.~(32)]{FHS14}.
\end{proof}

\subsection{Proof of Lemma~\ref{lem:evolution_operator_multiplicative_noise}}
\label{sec:proof:lem:evolution_operator_multiplicative_noise}

The result crucially relies on the expansion in powers of~$\dt$ of $\alpha_\dt(q,\Phi_\dt(q,G))$ (defined in~\eqref{eq:alpha_dt_multiplicative}), where $\Phi_\dt(q^n,G^n)$ encodes the proposal~\eqref{eq:EM_multiplicative}:
\[
\Phi_\dt(q,G) = q + \dt F(q) + \sqrt{2\dt} B(q) G.
\]
For notational convenience, we introduced the symmetric, definite, positive matrix $B(q) = M^{1/2}(q)$. We also write remainder terms as $\mathrm{O}(\dt^\delta)$. The equality $c(q,G) = \mathrm{O}(\dt^\delta)$ should be understood as: there exists $K_c > 0$ and $p_c \in \mathbb{N}$ such that, for all $q \in \cM$ and $G \in \R^d$,
\[
\left|c(q,G)\right| \leq K_c \dt^\delta \left(1+|G|^{p_c}\right).
\]
In particular, $\mathbb{E}_G\left|c(q,G)\right|^r \leq C_r \dt^{\delta r}$ for any $r \geq 0$.

Let us now evaluate the various terms in $\alpha_\dt(q,\Phi_\dt(q,G))$.
First,
\[
V(\Phi_\dt(q,G)) - V(q) = \sqrt{2\dt} \nabla V(q)^T B(q) G + \mathrm{O}(\dt).
\]
Consider next the terms corresponding to $\det M$. Since
\[
M(\Phi_\dt(q,G)) = M(q) + \sqrt{2\dt} DM(q) \cdot \big( B(q) G \big)+ \mathrm{O}(\dt),
\]
it holds 
\[
\begin{aligned}
\det M(\Phi_\dt(q,G)) & = \Big(\det M(q) \Big) \det \left(\Id + \sqrt{2\dt} M^{-1}(q) \left[ DM(q) \cdot \big( B(q) G \big)\right] + \mathrm{O}(\dt)\right) \\
& = \Big( \det M(q) \Big)\left(1 + \sqrt{2\dt} \Tr \Big( M^{-1}(q) \left[ DM(q) \cdot \big( B(q) G \big)\right]\Big) + \mathrm{O}(\dt) \right),
\end{aligned}
\]
so that
\[
\frac{1}{2} \Big(\ln \big(\det M(\Phi_\dt(q,G))\big)-\ln \big(\det M(q)\big)\Big) = \frac{1}{2} \sqrt{2\dt} \Tr \Big( M^{-1}(q) \left[ DM(q) \cdot \big( B(q) G \big)\right]\Big) + \mathrm{O}(\dt).
\]
We finally turn to the remaining term, which, using the short-hand notation $q' = \Phi_\dt(q,G)$, we decompose as
\begin{equation}
\label{eq:decomposition_Mq_norm}
\begin{aligned}
|q-q'-\dt F(q')|^2_{M(q')} - |q'-q-\dt F(q)|^2_{M(q)} & = \Big( |q-q'-\dt F(q')|^2_{M(q)} - |q'-q-\dt F(q)|^2_{M(q)} \Big) \\
& \ \ + \Big( |q-q'-\dt F(q')|^2_{M(q')} - |q-q'-\dt F(q')|^2_{M(q)} \Big).
\end{aligned}
\end{equation}
The first term on the right-hand side of~\eqref{eq:decomposition_Mq_norm} is the difference between two vectors in the same norm, while the second term is the variation of the norm a given vector when the matrix inducing the scalar product changes. We rely on the expansion
\[
\begin{aligned}
q-q'-\dt F(q') & = q - \Phi_\dt(q,G) - \dt F(\Phi_\dt(q,G)) \\
& = -\sqrt{2\dt} B(q) G - \dt \Big( F(q)+F(\Phi_\dt(q,G)\Big) \\
& = -\sqrt{2\dt} B(q) G - 2 \dt F(q) + \mathrm{O}(\dt^{3/2}), 
\end{aligned}
\]
so that
\[
|q-q'-\dt F(q')|^2_{M(q)} - |q'-q-\dt F(q)|^2_{M(q)} = 4 \sqrt{2} \, \dt^{3/2} G^T B(q) M^{-1}(q) F(q) + \mathrm{O}(\dt^{2}).
\]
For the second term, we use $M(q+x)^{-1} = M(q)^{-1} - M^{-1}(q) \left[ DM(q)\cdot x\right] M^{-1}(q) + \mathrm{O}(|x|^2)$:
\[
\begin{aligned}
|q-q'-\dt F(q')|^2_{M(q')} & - |q-q'-\dt F(q')|^2_{M(q)} \\
& = \big(q-q'-\dt F(q')\big)^T \big(M(q')^{-1} - M(q)^{-1}\big) \big( q-q'-\dt F(q') \big) \\
& = -(2\dt)^{3/2} G^T B(q) M(q)^{-1} \Big( DM(q)\cdot B(q)G\Big)M(q)^{-1} B(q)G + \mathrm{O}(\dt).
\end{aligned}
\]
The combination of all terms gives
\[
\alpha_\dt(q,\Phi_\dt(q,G)) = \sqrt{2\dt} \, \xi_{1/2}(q,G) + \mathrm{O}(\dt),
\]
with
\[
\begin{aligned}
\xi_{1/2}(q,G) & = \nabla V(q)^T B(q) G + \frac{1}{2} \Tr \Big( M^{-1}(q) \left[ DM(q) \cdot \big( B(q) G \big)\right]\Big) + G^T B(q) M^{-1}(q) F(q) \\
& \ \ - \frac{1}{2} G^T B(q) M^{-1}(q) \Big( DM(q)\cdot B(q)G\Big) M^{-1}(q) B(q)G \\
& = (\mathrm{div}M)^T M^{-1/2} G + \frac{1}{2} \Tr \Big( M^{-1}(q) \left[ DM(q) \cdot \big( B(q) G \big)\right]\Big) \\
& \ \ - \frac12 G^T M^{-1/2}(q) \Big( DM(q)\cdot B(q)G\Big) M^{-1/2}(q) G.
\end{aligned}
\]
Therefore,
\begin{equation}
  \label{eq:rate_Metropolis_multiplicative}
  A_\dt^{\rm MH}\big(q,\Phi_\dt(q,G)\big) = 1 - \sqrt{2\dt} \min\left(0,\xi_{1/2}(q,G)\right) + \mathrm{O}(\dt),
\end{equation}
while, in view of~\eqref{eq:expansion_Barker_ratio},
\begin{equation}
  \label{eq:rate_Barker_multiplicative}
  A_\dt^{\rm Barker}\big(q,\Phi_\dt(q,G)\big) = \frac{1}{2} - \frac14 \sqrt{2\dt} \, \xi_{1/2}(q,G) + \dt \, \xi_2(q,G) + \mathrm{O}(\dt^{3/2}).
\end{equation}
Since $\xi_{1/2}(q,G)$ is odd in~$G$, its expectation with respect to~$G$ vanishes. The term $\xi_2(q,G)$ involves only even powers of~$G$.

To conclude the proof, we write
\[
\begin{aligned}
\frac{P_\dt-\Id}{\dt}\psi(q) & = \EE_G\left[A_\dt\big(q,\Phi_\dt(q,G) \, \frac{\psi(\Phi_\dt(q,G))-\psi(q)}{\dt}\right] \\ 
& = a \, \EE_G\left[\frac{\psi(\Phi_\dt(q,G))-\psi(q)}{\dt}\right] + \EE_G\left[ \big( A_\dt\big(q,\Phi_\dt(q,G)\big)-a\big) \frac{\psi(\Phi_\dt(q,G))-\psi(q)}{\dt} \right] \\
& = a \, \cL\psi(q) +\EE_G\left[ \frac{A_\dt\big(q,\Phi_\dt(q,G)\big)-a}{\sqrt{\dt}} \, \frac{\psi(\Phi_\dt(q,G))-\psi(q)}{\sqrt{\dt}} \right].
\end{aligned} 
\]
In the Metropolis case, $a=1$ and, by the symmetry property $\xi_{1/2}(q,G)G = -\xi_{1/2}(q,-G)G$ (similar to~\eqref{eq:symmetry_xi_3/2}),
\[
\begin{aligned}
\EE_G\left[ \frac{A^{\rm MH}_\dt\big(q,\Phi_\dt(q,G)\big)-a}{\sqrt{\dt}} \, \frac{\psi(\Phi_\dt(q,G))-\psi(q)}{\sqrt{\dt}} \right] & = -2 \EE_G\left[ \min\left(0,\xi_{1/2}(q,G)\right)\, G^T B(q) \nabla \psi(q)\right] + \mathrm{O}(\sqrt{\dt}) \\
& = - \EE_G\left[ \xi_{1/2}(q,G)\, G^T B(q) \nabla \psi(q)\right] + \mathrm{O}\left(\sqrt{\dt}\right);
\end{aligned}
\]
while, in the Barker case, $a=1/2$ and
\[
\EE_G\left[ \frac{A^{\rm Barker}_\dt\big(q,\Phi_\dt(q,G)\big)-a}{\sqrt{\dt}} \, \frac{\psi(\Phi_\dt(q,G))-\psi(q)}{\sqrt{\dt}} \right] = -\frac12 \EE_G\left[ \xi_{1/2}(q,G)\, G^T B(q) \nabla \psi(q)\right] + \mathrm{O}\left(\dt\right).
\]
Let us emphasize that the remainder indeed is of order~$\dt$ and not~$\sqrt{\dt}$ since $\xi_{1/2}(q,G)$ involves only odd powers of~$G$ while $\xi_2(q,G)$ and the $\dt$ term in the expansion of $\psi(\Phi_\dt(q,G))-\psi(q)$ in powers of~$\dt$ involve only even powers of~$G$. The claimed result now follows from the following lemma.

\begin{lemma}
\label{lem:avg_drit_0_mult}
For any $v \in \R^d$, it holds $\EE_G\left[ \xi_{1/2}(q,G) \, \left(G^T v\right)\right] = 0$.
\end{lemma}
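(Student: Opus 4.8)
The statement is a purely Gaussian-moment computation, so the plan is to evaluate $\EE_G\left[\xi_{1/2}(q,G)\left(G^Tv\right)\right]$ directly from the explicit form of $\xi_{1/2}$ derived just above. Writing $B = M^{1/2}$ so that $M^{-1/2} = B^{-1}$ and $M^{-1} = B^{-2}$, I would split $\xi_{1/2} = T_1+T_2+T_3$ into its three summands: the linear term $T_1 = (\mathrm{div}M)^T B^{-1}G$, the linear term $T_2 = \tfrac12\Tr\big(M^{-1}[DM\cdot(BG)]\big)$, and the cubic term $T_3 = -\tfrac12\, G^TB^{-1}(DM\cdot BG)B^{-1}G$. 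Since $G^Tv$ is linear in~$G$, each product $T_j\,(G^Tv)$ is a polynomial of even degree in~$G$, so nothing vanishes by parity and the identity must come from an exact cancellation between the three pieces.

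Rather than invoke the fourth-moment (Wick) formula on $T_3$, I would use Gaussian integration by parts (Stein's identity) $\EE_G[f(G)G_i] = \EE_G[\partial_{G_i}f(G)]$, which reduces the claim to showing $\EE_G\left[\nabla_G\xi_{1/2}(q,G)\right]=0$ componentwise. The two linear terms contribute constants: $\partial_{G_i}T_1 = (B^{-1}\mathrm{div}M)_i$, while $\partial_{G_i}T_2 = \tfrac12(B\nabla\log\det M)_i$ after using $\Tr(M^{-1}\partial_kM) = \partial_k\log\det M$. Differentiating the cubic term $T_3$ produces a quadratic form in~$G$; taking its expectation with $\EE_G[GG^T]=\Id$ and contracting with $B^{-1}B=\Id$ and $B^{-2}=M^{-1}$ gives $\EE_G[\partial_{G_i}T_3] = -(B^{-1}\mathrm{div}M)_i - \tfrac12(B\nabla\log\det M)_i$.

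Summing the three contributions yields $\EE_G[\partial_{G_i}\xi_{1/2}]=0$ for every~$i$, and hence $\EE_G\left[\xi_{1/2}(q,G)(G^Tv)\right] = \sum_i v_i\,\EE_G[\partial_{G_i}\xi_{1/2}] = 0$ for all $v$, which is the claim. The main bookkeeping obstacle is the cubic term: one must carefully track the index contractions in $\EE_G\big[\partial_{G_i}\big(G^TB^{-1}(DM\cdot BG)B^{-1}G\big)\big]$, exploit the symmetry of~$B$ (so that $B^{-1}B^{-T}=M^{-1}$ and $B^{-1}B=\Id$) together with the symmetry $M=M^T$ to rewrite $\sum_\alpha\partial_\alpha M_{\alpha\beta}$ as $(\mathrm{div}M)_\beta$, and to recognize the trace--log-det structure $\Tr(M^{-1}\partial_kM)=\partial_k\log\det M$. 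Once these identifications are in place the cancellation against $T_1$ and $T_2$ is automatic, which is precisely the algebraic manifestation of the fact that the acceptance/rejection correction restores consistency with the invariant measure~$\mu$ at leading order in~$\dt$.
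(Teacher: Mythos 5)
Your proof is correct and is essentially the paper's argument in a different wrapper: the paper evaluates $\EE_G\big[(G^Tb)(G^Tv)\big]$ and $\EE_G\big[(\mathcal{A}:G^{\otimes 3})(G^Tv)\big]$ by explicit Gaussian moment formulas and then verifies the tensor identity $\sum_j(\mathcal{A}_{ijj}+\mathcal{A}_{jij}+\mathcal{A}_{jji})=b_i$, which is exactly the contraction your Stein-identity reduction to $\EE_G[\nabla_G\xi_{1/2}]=0$ produces. The key algebraic ingredients (the identities $M^{-1/2}B=\Id$, $\sum_j[M^{-1/2}]_{rj}[M^{-1/2}]_{sj}=[M^{-1}]_{rs}$, and $\Tr(M^{-1}\partial_kM)=\partial_k\log\det M$) and the resulting cancellation are identical, so this is the same proof modulo how the Gaussian moments are computed.
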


\begin{proof}
  Note first that, for a given vector $b \in \R^d$ and a given tensor $\mathcal{A}$ of order~3,
  \[
  \EE_G \Big[\left(G^T b\right)\left(G^T v\right)\Big] = b^T v,
  \qquad
  \EE_G \Big[\left(\mathcal{A} : G^{\otimes 3}\right)\left(G^T v\right)\Big] = \sum_{i,j=1}^d v_i \left(\mathcal{A}_{jji} + \mathcal{A}_{jij} + \mathcal{A}_{ijj}\right).
  \]
  In view of the expression of~$\xi_{1/2}$, we introduce  
  \[
  \mathcal{A} : G^{\otimes 3} = G^T M^{-1/2}(q) \Big( DM(q)\cdot B(q)G\Big) M^{-1/2}(q) G,
  \]
  and
  \[
  b^T G  = 2(\mathrm{div}M)^T M^{-1/2} G + \Tr \Big( M^{-1}(q) \left[ DM(q) \cdot \big( B(q) G \big)\right]\Big).
  \]
  Recall that $M$, $M^{-1}$ and $B$ are symmetric. The components of~$\mathcal{A}$ and~$b$ respectively read
  \[
  \mathcal{A}_{ijk} = \sum_{r,s,t=1}^d \left[M^{-1/2}\right]_{r,i}\left[M^{-1/2}\right]_{s,j} \left(\partial_{q_t} M_{rs} \right) B_{tk},
  \]
  and
  \[
  b_i = 2 \sum_{r=1}^d \left[M^{-1/2}\right]_{i,r} \mathrm{div}(M_r) + \sum_{r,s,t=1}^d \left[M^{-1}\right]_{r,s} \left(\partial_{q_t} M_{rs} \right) B_{ti}.
  \]
  Now, in view of the equality
  \[
  \sum_{j=1}^d \left[M^{-1/2}\right]_{s,j} B_{tj} = \left[M^{-1/2} B \right]_{s,t} = \delta_{s,t}, 
  \]
  it holds
  \[
  \sum_{j=1}^d \mathcal{A}_{ijj} + \mathcal{A}_{jij} = 2\sum_{r,s}^d \left[M^{-1/2}\right]_{r,i} \left(\partial_{q_t} M_{rt} \right) = 2 \sum_{r=1}^d \left[M^{-1/2}\right]_{i,r} \mathrm{div}(M_r).
  \]
  Since
  \[
  \sum_{j=1}^d \left[M^{-1/2}\right]_{r,j}\left[M^{-1/2}\right]_{s,j} = \left[M^{-1}\right]_{r,s},
  \]
  we also have
  \[
  \sum_{j=1}^d \mathcal{A}_{jji} = \sum_{r,s,t=1}^d \left[M^{-1}\right]_{r,s} \left(\partial_{q_t} M_{rs} \right) B_{ti}.
  \]
  Therefore,
  \[
  \sum_{j=1}^d \mathcal{A}_{ijj} + \mathcal{A}_{jij} + \mathcal{A}_{jji} = b_i,
  \]
  which shows that $\EE_G \Big[\left(\mathcal{A} : G^{\otimes 3} - G^Tb \right)\left(G^T v\right)\Big] = 0$ and gives the expected result.
\end{proof}

\section*{Appendix: Computation of the refence value for the diffusion constant for one-dimensional systems}
\label{app:ref_diff}

We describe here how to analytically compute the self-diffusion coefficient~\eqref{eq:self_diff_mult_noise} for a one-dimensional system. We present the derivation for dynamics with multiplicative noise, the case of additive noise being recovered by setting $M(q) = 1$. The first task is to rewrite the integrated autocorrelation function as the linear response of a perturbation of the equilibrium dynamics, and next to obtain an analytic expression of the invariant measure of the system in order to evaluate the linear response. We refer to~\cite[Section~5]{ActaNum} for a mathematical introduction to the theory of linear reponse for the computation of transport coefficients.

Let us first make precise the nonequilibrium dynamics we consider. We perturb the force $-V'$ in the equilibrium dynamics \eqref{eq:dynamics_mult} by a constant force of magnitude $\eta \in \mathbb{R}$, as follows:
\[
dq^\eta_t = \Big( M(q^\eta_t)\left[- \beta V'(q^\eta_t) + \eta\right] + M'(q^\eta_t) \Big) dt + \sqrt{2} M^{1/2}(q^\eta_t) \, dW_t,
\]
It can be shown that this dynamics admits, for any value of $\eta \in \R$, a unique invariant measure which has a smooth density $\psi_\eta(q)$ with respect to the Lebesgue measure. Linear response results show that
\begin{equation}
\label{eq:D_LR}
  \mathcal{D} = \int_\cM M(q) \, \mu(dq) + \lim_{\eta \to 0} \frac1\eta \int_\cM F(q) \, \psi_\eta(q) \, dq,
\end{equation}
where $F$ is defined in~\eqref{eq:def_F_mult}. Now, the density $\psi_\eta$ satisfies the stationnary Fokker-Planck equation
\[
\frac{d}{dq}\left( M(q) \left[ (\beta V'-\eta)\psi_\eta + \frac{d\psi_\eta}{dq}\right] \right) = 0.
\]
The unique solution of this equation turns out to be the following periodic function:
\begin{equation}
\label{eq:psi_eta}
\psi_\eta(q) = \frac{1}{Z_\eta} \rme^{-\beta V(q)} \int_0^1 \frac{\rme^{\beta V(q+y)-\eta y}}{M(q+y)} \, dy,
\end{equation}
where $Z_\eta$ is a normalization constant ensuring that $\psi_\eta$ integrates to~1. 

The value of $\mathcal{D}$ is finally obtained by a finite difference approximation of the linear response in~\eqref{eq:D_LR}, with the value of the integral with respect to~$\psi_\eta$ computed using a double numerical quadrature based on~\eqref{eq:psi_eta}.

\subsection*{Acknowledgements}

We thank Alain Durmus and Gilles Vilmart for fruitful discussions, Markos Katsoulakis for pointing us to reference~\cite{Gidas95} on which Remark~\ref{rmk:other_rules} is based, as well as Marie Jardat for suggesting us to study the influence of the rejections on the dynamical properties of corrected discretizations of overdamped Langevin dynamics. The work of G.S. is supported by the Agence Nationale de la Recherche, under grant ANR-14-CE23-0012 (COSMOS) and by the European Research Council under the European Union's Seventh Framework Programme (FP/2007-2013) / ERC Grant Agreement number 614492. M.F. gratefully ackowledges the kind hospitality of the Hausdorff Research Institute for Mathematics. We also benefited from the scientific environment of the Laboratoire International Associ\'e between the Centre National de la Recherche Scientifique and the University of Illinois at Urbana-Champaign. Funding from NEEDS ``Milieux poreux'' and from GdR MOMAS is gratefully acknowledged.


\end{document}